\theoremstyle{plain}
\newtheorem{theorem}{Theorem}[section]
\newtheorem{proposition}[theorem]{Proposition}
\newtheorem{corollary}[theorem]{Corollary}
\newtheorem{lemma}[theorem]{Lemma}
\theoremstyle{definition}
\newtheorem{remark}[theorem]{Remark}
\DeclarePairedDelimiter\floor{\lfloor}{\rfloor}
\newcommand{\Pb}{\mathbb{P}}
\newcommand{\Zphi}{\mathcal{Z}^{\Phi}}
\newcommand{\Zt}{\mathcal{Z}^{\mathsf{t}}}
\newcommand{\Zj}{\mathcal{Z}^{j}}
\newcommand{\Xphi}{\mathcal{X}^{\Phi}}
\newcommand{\Xt}{\mathcal{X}^{\mathsf{t}}}
\newcommand{\Xj}{\mathcal{X}^{j}}
\newcommand{\Yt}{\mathcal{Y}^{\mathsf{t}}}
\newcommand{\Yj}{\mathcal{Y}^{j}}
\newcommand{\Fphi}{\mathcal{F}^{\Phi}}
\newcommand{\Ft}{\mathcal{F}^{\mathsf{t}}}
\newcommand{\Fj}{\mathcal{F}^{j}}
\newcommand{\Fi}{\mathcal{F}^{\mathsf{inv}}}
\newcommand{\Gphi}{\mathcal{G}^{\Phi}}
\newcommand{\Gt}{\mathcal{G}^{\mathsf{t}}}
\newcommand{\Gj}{\mathcal{G}^{j}}
\newcommand{\defeq}{\vcentcolon=}
\newcommand{\eqdef}{=\vcentcolon}
\newcommand{\AS}{\xrightarrow{\scriptscriptstyle{a.s.}}}
\newcommand{\Probto}{\xrightarrow{\scriptscriptstyle{\Prob}}}
\newcommand{\ProbSto}{\xrightarrow{\scriptscriptstyle{\ProbS}}}
\newcommand{\distto}{\xrightarrow{\scriptscriptstyle{\mathrm{d}}}}
\newcommand{\distSto}{\xrightarrow{\scriptscriptstyle{\mathrm{d},\S}}}
\newcommand{\stabto}{\xrightarrow{\scriptscriptstyle{\mathrm{st}}}}
\newcommand{\stabSto}{\xrightarrow{\scriptscriptstyle{\mathrm{st},\S}}}
\newcommand{\ind}[1]{\mathds{1}_{\{#1\}}}
\newcommand{\N}{\mathds{N}}
\newcommand{\Z}{\mathds{Z}}
\newcommand{\R}{\mathds{R}}
\newcommand{\C}{\mathds{C}}
\newcommand{\T}{\mathds{T}}
\newcommand{\U}{\mathcal{U}_{\infty}}
\newcommand{\Surv}{\mathcal{S}}
\newcommand{\Prob}{\mathds{P}}
\newcommand{\ProbS}{\mathds{P}^\S}
\DeclareMathOperator{\Var}{\mathrm{Var}}
\DeclareMathOperator{\Cov}{\mathrm{Cov}}
\newcommand{\E}{\mathds{E}}
\newcommand{\A}{\mathcal{A}}
\renewcommand{\L}{\mathcal{L}}
\newcommand{\F}{\mathcal{F}}
\newcommand{\cN}{\mathcal N}
\renewcommand{\S}{\mathcal{S}}
\newcommand{\eqdist}{%
	\mathrel{\vbox{\offinterlineskip\ialign{%
				\hfil##\hfil\cr
				$\scriptscriptstyle\mathrm{law}$\cr
				\noalign{\kern.2ex}
				$=$\cr
}}}}
\newcommand{\type}{\mathrm t}
\newcommand{\kk}[1]{\textcolor{purple}{#1}}
\title{Gaussian fluctuations for the two urn model}
\author{Konrad Kolesko and  Ecaterina Sava-Huss}
\date{\today}
\begin{document}
\maketitle

\begin{abstract}
We introduce a modification of the generalized P\'olya urn model containing two urns and we study the number of balls $B_j(n)$ of a given color $j\in\{1,\ldots,J\}$, $J\in\N$ added to the urns after $n$ draws. We provide sufficient conditions under which the random variables $(B_j(n))_{n\in\N}$  properly normalized and centered converge weakly to a limiting random variable. The result reveals a similar trichotomy as in the classical case with one urn, one of the main differences being that in the scaling we encounter 1-periodic continuous functions. Another difference in our results compared to the classical urn models is that the phase transition of the second order behavior occurs at $\sqrt{\rho}$ and not at $\rho/2$, where $\rho$ is the dominant eigenvalue of the mean replacement matrix.

\end{abstract}

\textit{2020 Mathematics Subject Classification.} 60J80, 60J85, 60B20, 60F05, 60F17.\\
\textit{Keywords}: multitype branching processes, Crump-Mode-Jagers processes, P\'olya urn model, functional limit theorems, martingale CLT.

%\begin{small}
%\tableofcontents
%\end{small}
\section{Introduction}

\textbf{A brief overview on classical P\'olya urn models.}
For $J\in\N$, a $J$-dimensional P\'olya urn process $(B(n))_{n\in\N}$ is a $\N^J$-valued stochastic process which represents the 
evolution of an urn containing balls of $J$ different colors denoted by $1,2,\ldots, J$. The initial composition
of the urn can be specified by a $J$-dimensional vector $B(0)=\big(B_1(0),\ldots,B_J(0)\big)$, the $j$-th coordinate $B_j(0)$ of $B(0)$ representing the number of balls of color $j$ present in the urn at the beginning of the process, i.e.~at time $0$. At each subsequent timestep $n\geq 1$, we pick a ball uniformly at random 
, inspect its color, put it back  into the urn with an additional, random collection of $L^{(j)}=(L^{(j,1)},\ldots,L^{(j,J)})$  balls, if the picked ball has color $j$ (which happens with probability proportional to the already existing number of balls of color $j$ in the urn). %\textcolor{blue}{ If the model is without replacement, than one does not return to the urn the picked ball, like in the model we consider throughout this work.} 
This way of adding balls may be summed up by the so-called replacement matrix $L$, which in our case is a random matrix $L$ defined as following. For $J\in\N$, we write $[J]:=\{1,\ldots,J\}$, and we consider  a sequence  $(L^{(j)})_{j\in [J]}$ of $J$ independent $\N^J$-valued random (column) vectors. We denote by $L$ the $J\times J$ random matrix with column vectors $L^{(j)}$, that is $L=\left(L^{(1)},L^{(2)},\ldots,L^{(J)}\right)$, and 
%that is:
%$$L=\left(L^{(1)},L^{(2)},\ldots,L^{(J)}\right)
%=\begin{pmatrix}
%	L^{(1,1)} & L^{(2,1)} & \cdots & L^{(J,1)} \\
%	L^{(1,2)} & L^{(2,2)} & \cdots & L^{(J,2)} \\
%	\vdots    & \vdots    & \vdots & \vdots    \\
%	L^{(1,J)} & L^{(2,J)} & \cdots & L^{(J,J)}
%\end{pmatrix}
%$$  
by $a_{ij}=\E\left[L^{(j,i)}\right]$ the expectation of $L^{(j,i)}$, for all $i,j\in [J]$, and by $A=(a_{ij})_{i,j\in[J]}$ so  $\E L=A$.
% In terms of the matrix $L$, in the classical urn model,  any picked ball of color $j\in[J]$, is replaced by a collection of balls in which for each $i\in[J]$ there are  $L^{(j,i)}$  balls of color $i$. This collection is then returned  to the urn.  
 We then continue the process, each time taking an independent (of everything else) copy of the replacement matrix $L$. Note that the described model involves replacement, meaning that the picked ball is placed back into the urn after each draw. However, it is also possible to study the model without replacement by considering the replacement matrix $L-I$, where $I$ is the $J\times J$ identity matrix. In this case, it might happen that some diagonal entries of this new matrix are equal to -1, and in this case a ball is removed from the urn. The \emph{urn process} is the sequence $(B(n))_{n\geq 1}$ of 
$J$-dimensional random vectors with nonnegative integer coordinates, and the $j$-th coordinate $B_j(n)$ of $B(n)$ represents the number of balls of color $j$ in the urn after the $n$-th draw, for  $j\in [J]$. We also define $B^{\circ}(n)$ to be the number of drawn balls up to time $n$, that is, $B^\circ_j(n)$ represents the number of drawn balls of color $j$ up to the $n$th draw (in particular, $B^\circ_1(n)+\cdots+B^\circ_J(n)=n$).  As one expects, the limit behavior of $(B(n))_{n\geq 1}$ and $(B^\circ(n))_{n\ge 1}$ depends on the distribution of the replacement matrix $L$, and in particular on the spectral properties of its mean value matrix $A$. 

The literature on limit theorems for P\'olya urn models is enormous and any attempt to give a complete survey here is hopeless, but we mention some relevant references and results in this direction. For additional results, the reader is referred to the cited articles and the references therein.  In 1930, in his original
article \cite{polya-1930}, P\'olya investigates a two-color urn process with replacement matrix $L$ being the identity.
If $L$ is a non-random, irreducible matrix with exclusively non-negative entries, then it is well-established that the sequence $B^\circ(n)/n$ converges almost surely to  $\mathrm{u}$ as $n$ goes to infinity, where $\mathrm{u}$ is  the left eigenvector associated with $\rho$, the spectral radius of $A=\E L$. The coordinates of $\mathrm{u}$ are all non-negative and normalized in such a way that they sum up to one; see  \cite{athreya-karlin-1969,janson-fclt-urns,smythe-clt-urns-1996,Mueller+Neininger:2018} for more details. The second-order behavior of the sequence $(B^\circ(n))_{n\in\N}$ depends on the second eigenvalue $\lambda_2$ (ordered by real parts) of $A$. If $\mathrm{Re}(\lambda_2)$, the real part of the second-largest eigenvalue, is less than or equal to $\rho/2$, then the fluctuations around the limit $\mathrm{u}$ are Gaussian (with a random variance). The  magnitude of the fluctuations is of order $\sqrt{n}$ when $\mathrm{Re}(\lambda_2)<\rho/2$ and of order $\log(n)\sqrt{n}$ in  the critical case $\mathrm{Re}(\lambda_2) = \rho/2$. Conversely, if $\mathrm{Re}(\lambda_2) > \rho/2$, then the fluctuations are non-Gaussian and of higher order. See Janson \cite{janson-fclt-urns} for this  trichotomy  and  \cite{pouyanne-polya} for an approach based on the spectral decomposition of a suitable finite difference transition operator on polynomial functions.

Apart from these seminal results, the model of P\'olya urns has been extended and more precise asymptotics are known.
Several generalizations have been considered in \cite{janson-fclt-urns}. Another possible extension is to consider measure valued P\'olya processes; see the recent work \cite{janson-mailler-2021} for second order asymptotics of such processes for infinitely many colors and the cited literature there for additional results.

\textbf{The model and our contribution.}
%In the current paper we provide an application of the  discrete time multitype Crump-Mode-Jagers (CMJ) process $(Z_n^{\Phi})_{n\in \N}$ counted with a characteristic $\Phi$, as introduced in  \cite{kolesko-sava-huss-char} to a certain modification of a Pólya urn model with two interacting urns. 
In the current paper we consider a modification of the P\'olya urn model containing two urns marked $\mathsf{U}_1$ and $\mathsf{U}_2$. For fixed  $J\in\N$ to be the number of colors we consider the random $J\times J$ matrix $L$ as above, with independent column vectors $L^{(j)}$ and expectation matrix $A=\E L$. With these initial conditions, we define the $\N^J$-valued stochastic process $(B(n))_{n\in \N}$, with $B(n)=(B_1(n),\ldots,B_J(n))$ as following. Suppose that at time $0$ we have one ball of type (color) $j_0$ in urn $\mathsf{U}_1$. We pick it out and we put a collection of $L^{(j_0)}=(L^{(j_0,1)},\dots, L^{(j_0,J)})$ balls  of types $1,2,\ldots,J$ respectively into urn $\mathsf{U}_2$;  that is, for any $i\in [J]$, we put $L^{(j_0,i)}$ balls of type $i$ into urn $\mathsf{U}_2$, thus we have $\sum_{i=1}^JL^{(j_0,i)}$ balls in urn $\mathsf{U}_2$.
%We emphasize again that the \textcolor{blue}{drawing is without replacement so the picked ball is not returned to the other urn}.
At the next step, we draw from urn $\mathsf{U}_2$ balls uniformly at random, one after another, and for any picked ball of type $j$ we add an independent collection of balls with distribution  $L^{(j)}$ into urn $\mathsf{U}_1$. We continue until urn $\mathsf{U}_2$ is empty and then we exchange the roles of urns $\mathsf{U}_1$ and $\mathsf{U}_2$. We emphasize  that, contrary to the P\'olya urn model, in the two urns model it is more convenient to consider the drawing without replacement, that is, each picked ball is not returned to either urn, but only determines the future additions of balls. In particular, all coefficients of $L$ are non-negative.
For $j\in [J]$,  by $B_j(n)$ we denote  the total number of balls of type $j$ that have been added to one of these two urns up to (and including) the $n$-th draw.
 
Graphically, we can draw a random tree in order to visualize the step-by-step evacuation of one urn and the refillment of the other one as following: we colour the nodes of the tree in colours $\{1,\ldots,J\}$, the content of the urn $\mathsf{U}_1$ represents the root of the tree coloured with some fixed $j_0\in \{1,\ldots,J\}$, i.e.~the zero generation; after one draw of the node of type $j_0$ the content $L^{(j_0)}$ of $\mathsf{U}_2$ represents the first generation. Then after choosing balls (i.e.~nodes of the tree at level one) uniformly at random without replacement and putting their offspring in the other urn $\mathsf{U}_1$, we create step-by-step the second generation of the tree, and fill up step-by-step $\mathsf{U}_1$ again. Thus what we propose here is a more refined branching process where the transition from generation $k$ to generation $k+1$ is looked at after each member of generation $k$ reproduces. Visualizing the process as a random tree that grows after each node is chosen, the quantity $B_j(n)$ represents the number of nodes of type $j$ and $\sum_{j=1}^JB_j(n)$ represents the total number of nodes in the tree after $n$ steps of our process, that is after $n$ balls have been picked up from $\mathsf{U}_1$ and $\mathsf{U}_2$. For a better understanding, we illustrate this process on an example in Appendix \ref{sec:apA} and Figure \ref{fig:urn}.

The main focus of the current work is to investigate first and second order asymptotics of $B_j(n)$ as $n\to\infty$, $j\in [J]$.
It may happen that with positive probability $L^{(j,i)}$ vanishes for all $i\in[J]$. In such a case we do not add any new ball to the urn and we just remove the picked ball of type $j$. In particular, it can happen that after a finite number $n_0$ of steps,  both  urns are empty and in such a case we define $B(n)=B(n_0)$, for $n\ge n_0$. Since we are interested in the long time behavior of the urn process, we restrict the analysis to a set where this does not happen, i.e.~to the survival event $\mathcal S=\{|B(n)|\to\infty\}$.

We can also define the corresponding sequence $(B^\circ(n))_{n\ge 0}$ that represents the types of the drawn balls up to time $n$. While it is possible to ask about limit theorems for $B^\circ(n)$, the method developed in this paper for studying $B(n)$ is directly applicable to $B^\circ(n)$. Therefore, we focus our attention exclusively on the sequence $(B(n))_{n\in\N}$.
 
The approach we use to investigate $(B_j(n))_{n\geq 0}$, $j\in [J]$ is to embed it into a multitype discrete time branching process $(Z_n)_{n\in\N}$ with offspring distribution matrix  $L$. A similar approach using the Athreya-Karlin embedding allowed Janson \cite{janson-fclt-urns} to study $(B^\circ_j(n))_{n\geq 0}$ for the P\'olya urn model.
One difference between our model and the one in \cite{janson-fclt-urns} is that, in the latter the process is embedded  into a multitype continuous time Markov branching process, and an individual reproduces after an exponential clock rings. In our model, in the embedded branching process, an individual reproduces after  deterministic time 1. The lattice nature of the model manifests itself in the second order behavior of $(B_{j}(n))_{n\geq 0}$. 
\vspace{-0.3cm}
\paragraph{Assumptions.} In this work we use the following assumptions:
\vspace{-0.3cm}
\begin{enumerate}[(GW1)]\setlength\itemsep{0em}
\item $A$ has spectral radius $\rho>1$.
\item $A$ is positively regular.
\item $\mathbf{0}\neq \sum_{j=1}^J\Cov\left[L^{(j)}\right]$ and  $\Var[L^{(i,j)}]<\infty$ for all $i,j\in[J]$.
\item For every $i,j\in[J]$, the expectation  $\E[ L^{(i,j)}\log L^{(i,j)}]$ is finite.
\end{enumerate}
In the third condition above, $\mathbf{0}$ is the $J\times J$ zero matrix, and for $j\in[J], \Cov\left[L^{(j)}\right]$ represents the $J \times J$ covariance matrix of the vector $L^{(j)}$. If the matrix $A$ is irreducible, Perron-Frobenius theorem ensures that the dominant eigenvalue $\rho$ of $A$ is real, positive and simple.  If $\rho>1$, this means that the multitype branching process with offspring distribution matrix $L$ is supercritical, i.e.~$\Prob(\mathcal S)>0$. If $\mathrm u$ is the corresponding eigenvector for $\rho$, then clearly, all the entries $\mathrm u_j$ are strictly greater than zero for any $j\in[J]$ and we assume that $\mathrm u$ is normalized in such a way that $\sum_{j\in[J]}\mathrm u_j=1$.
First order asymptotics of $B_j(n)$, $j\in[J]$ are determined by $\rho$ and the vector $\mathrm u$. 
 \begin{theorem}\label{thm:main_SSLN}
Assume (GW1),(GW2) and (GW4) hold. Then for any $j\in [J]$ we have the following strong law of large numbers for the total number of balls of type $j$ after $n$  draws:
 \begin{align*}
 \lim_{n\to\infty}\frac{B_j(n)}{n}=\rho \mathrm u_j,\quad \quad \ProbS\textit{-almost surely}. 
 \end{align*}
 \end{theorem}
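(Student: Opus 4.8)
The plan is to exploit the embedding, described in the introduction, of $(B(n))_n$ into the discrete-time multitype Galton--Watson process $(Z_n)_{n\ge0}$ with offspring laws $L^{(1)},\dots,L^{(J)}$ and $Z_0=e_{j_0}$: generation $k$ of the genealogical tree is the $k$-th batch of balls, and it is drawn one ball at a time in uniformly random order, so that, setting $N_k:=\sum_{l=0}^{k-1}|Z_l|$, after $N_k$ draws generations $0,\dots,k-1$ have been exhausted and the number of drawn balls of type $i$ is $B^{\circ}_i(N_k)=\sum_{l=0}^{k-1}Z_l^i$. The first step is to transfer the claim from $B_j$ to the drawn-ball counts $B^{\circ}_i$. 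At the $m$-th draw the drawn ball has some type $\tau_m$, and we then add a fresh, independent litter with law $L^{(\tau_m)}$; running the process with i.i.d.\ litters $(\tilde L^{(i)}_\ell)_{\ell\ge1}\sim L^{(i)}$ attached to the successive type-$i$ draws gives $B_j(n)=O(1)+\sum_{i=1}^{J}\sum_{\ell=1}^{B^{\circ}_i(n)}\tilde L^{(i,j)}_\ell$. Since $\E[\tilde L^{(i,j)}_1]=a_{ji}<\infty$ (finiteness follows from (GW4)), once we know $B^{\circ}_i(n)/n\to\mathrm u_i>0$ the strong law for i.i.d.\ summands, composed with the random index $B^{\circ}_i(n)\to\infty$, gives $\sum_{\ell\le B^{\circ}_i(n)}\tilde L^{(i,j)}_\ell=a_{ji}B^{\circ}_i(n)+o(n)$, whence $B_j(n)/n\to\sum_i a_{ji}\mathrm u_i=(A\mathrm u)_j=\rho\mathrm u_j$. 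So it suffices to prove $B^{\circ}_i(n)/n\to\mathrm u_i$ $\ProbS$-almost surely for every $i\in[J]$.

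For this I would invoke the Kesten--Stigum theorem, which holds under precisely (GW1), (GW2), (GW4): $\rho^{-k}Z_k\to W\mathrm u$ almost surely, where $W:=\lim_k\rho^{-k}|Z_k|$ satisfies $W>0$ a.s.\ on $\mathcal S$ and $W=0$ elsewhere, and where one checks that $\mathcal S=\{|B(n)|\to\infty\}$ is exactly the survival event of $(Z_k)$. Summing geometric series gives $\rho^{-k}N_k\to W/(\rho-1)$ and $\rho^{-k}\sum_{l<k}Z_l^i\to W\mathrm u_i/(\rho-1)$, so $B^{\circ}_i(N_k)/N_k\to\mathrm u_i$ along the sparse subsequence $(N_k)_k$. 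The obstacle is that $N_{k+1}/N_k\to\rho>1$, so a crude interpolation between successive synchronization times loses a multiplicative factor of $\rho$; this must be repaired using the uniformly random order in which each generation is drawn.

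Concretely, write $n=N_k+p$ with $0\le p<|Z_k|$, and let $Y_k(p)$ be the number of type-$i$ balls among the first $p$ draws of generation $k$, so $B^{\circ}_i(n)=\sum_{l<k}Z_l^i+Y_k(p)$. Drawing without replacement from the $|Z_k|$ balls of generation $k$ makes $Y_k(p)$ hypergeometric with $\E[Y_k(p)\mid Z_k]=\tfrac{p}{|Z_k|}Z_k^i$, and a Hoeffding-type estimate for sampling without replacement yields $\Prob\bigl(|Y_k(p)-\tfrac{p}{|Z_k|}Z_k^i|\ge s\mid Z_k\bigr)\le 2e^{-2s^2/p}$. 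Taking $s=\varepsilon|Z_k|$ and a union bound over $p\le|Z_k|$ gives the bound $2|Z_k|\,e^{-2\varepsilon^2|Z_k|}$, which, because $|Z_k|\sim W\rho^k\to\infty$ on $\mathcal S$, is summable in $k$; a conditional Borel--Cantelli argument then yields $\max_{0\le p<|Z_k|}\bigl|Y_k(p)-\tfrac{p}{|Z_k|}Z_k^i\bigr|=o(\rho^k)$ $\ProbS$-a.s. This step uses only the random order, no moment of $L$ beyond the first.

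To conclude, for $n=N_k+p$ set $t:=p/|Z_k|\in[0,1)$ and divide by $\rho^k$:
\[
\frac{B^{\circ}_i(n)}{n}=\frac{\rho^{-k}\sum_{l<k}Z_l^i+t\,\rho^{-k}Z_k^i+o(1)}{\rho^{-k}N_k+t\,\rho^{-k}|Z_k|},
\]
the $o(1)$ being uniform in $t$ by the previous paragraph. As $k\to\infty$ the numerator tends to $\tfrac{W\mathrm u_i}{\rho-1}+tW\mathrm u_i$ and the denominator to $\tfrac{W}{\rho-1}+tW$, uniformly in $t\in[0,1]$, and on $\mathcal S$ the denominator stays bounded below by $W/(\rho-1)>0$; hence the quotient tends to $\mathrm u_i$ for every $t$, so $B^{\circ}_i(n)/n\to\mathrm u_i$ $\ProbS$-a.s., which by the first paragraph proves the theorem. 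The heart of the matter is the interpolation obstacle above; the rest is assembling Kesten--Stigum with the i.i.d.\ strong law and the concentration estimate.
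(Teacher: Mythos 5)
Your proposal is correct, but it takes a genuinely different route from the paper. The paper proves the theorem entirely inside its CMJ-with-characteristics framework: it introduces the threshold characteristics $\Phi^{\mathsf t}_x,\Phi^j_x$, derives the strong law for $Z_n^{\Phi^{\mathsf t}_x}$ and $Z_n^{\Phi^j_x}$ from Proposition 4.1 of the companion paper (Proposition \ref{prop:lln1}), upgrades this to a continuous-parameter limit $\mathcal Z^{\Phi}(x)/(\rho^{x}l_\rho(x))\to \tfrac{1}{\rho-1}(a+b\rho\mathrm u_j)W$ as $x\to\infty$ by a sandwich argument (Proposition \ref{lln}), and then simply writes $B_j(k)/k=\Zj(\tau_k)/\Zt(\tau_k)$ at the stopping times $\tau_k$, so that the two almost sure limits divide out and no interpolation issue ever appears explicitly. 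You instead reduce $B_j$ to the drawn-type counts $B^{\circ}_i$ via the i.i.d.\ litters attached to successive type-$i$ draws together with the SLLN along random indices, and prove $B^{\circ}_i(n)/n\to\mathrm u_i$ by Kesten--Stigum along the generation-completion times $N_k$ plus a Hoeffding-type bound for sampling without replacement (with a union bound over the position $p$ and conditional Borel--Cantelli) to control the within-generation behaviour uniformly; this is exactly the issue the paper handles through the uniform thresholds $U_u$ and continuity in the threshold variable $x$, so the two arguments repair the same factor-$\rho$ interpolation loss by different means. Your route is more elementary and self-contained (it does not invoke the companion paper's Proposition 4.1 nor the characteristic machinery) and uses only the hypotheses (GW1), (GW2), (GW4), matching the statement; what it does not buy is the continuous-time apparatus $\Zt,\Zj,\tau_k$ that the paper sets up here precisely because it is reused for the second-order results. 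In a full write-up you should spell out two routine points: the filtration argument showing that the litters indexed by the order of the type-$i$ draws indeed form an i.i.d.\ array with law $L^{(i)}$, and the uniformity in $t=p/|Z_k|$ when passing to the limit in the final quotient.
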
 
Thus, the first order behavior of $B_j(n)$, $j\in [J]$ resembles the first order behavior of $B^\circ_j(n)$ from the model with one urn \cite{janson-fclt-urns}. 
In order to understand second order asymptotics of $B_j(n)$ we need full information on the spectral decomposition of the mean replacement matrix $A$. We denote by
$\sigma_A$ the spectrum of the matrix $A$ and define $\sigma_A^1=\{\lambda\in\sigma_A:\ |\lambda|>\sqrt{\rho}\}$, $\sigma_A^2=\{\lambda\in\sigma_A:\ |\lambda|=\sqrt{\rho}\}$, and $\sigma_A^3=\{\lambda\in\sigma_A:\ |\lambda|<\sqrt{\rho}\}$. Then we can write
$$\sigma_A=\sigma_A^{1}\cup\sigma_A^{2}\cup\sigma_A^{3}.$$ 
For a simple eigenvalue $\lambda\in\sigma_A$, we denote by $\mathrm u^\lambda$ and $\mathrm v^\lambda$ the corresponding left and right eigenvectors. We set
$$\gamma=\max\{|\lambda|:\lambda\in\sigma_A\setminus\{\rho\}\}\quad \text{and}\quad \Gamma=\{\lambda\in\sigma_A:|\lambda|=\gamma\},$$
 so $\rho-\gamma$ is the \textit{spectral gap} of $A$, and $\Gamma$ is the set of eigenvalues of $A$ where the spectral gap is achieved. For a complex number $z$ we set $\log_\rho z=\frac{\log z}{\log \rho},$ where $z\mapsto\log z$ is the principal branch of the natural logarithm. Denote by $\{\mathrm e_j\}_{j\in[J]}$ the standard basis vectors in $\R^J$. For $j\in [J]$, consider the following row vector in $\R^{1\times J}$
 \begin{equation}\label{eq:w_j}
 \mathrm{w}_j=\mathrm{e}_j^{\top}A-\rho\mathrm{u}_j\mathrm{1},
 \end{equation} 
 whose $i$-th entry is $\mathrm{w}_{ji}=a_{ji}-\rho\mathrm{u}_j=\E[L^{(i,j)}]-\rho\mathrm{u}_j$, for $1\leq i\leq J$.
 In the above equation, $\mathrm{1}$ denotes the vector in $\mathbb{R}^{1\times J}$ with all entries equal to one.
Our main result provides second order behavior of $(B_j(n))_{n\in \N}$.

\begin{theorem}\label{them:main_CLT}
Assume that (GW1)-(GW3) hold, all $\lambda\in\Gamma$ are simple, and there exists $\delta>0$ such that $\E\big[\big(L^{(i,j)}\big)^{2+\delta}\big]<\infty$ for all $i,j\in[J]$. Then for any $j\in[J]$ the following trichotomy holds:
\vspace{-0.25cm}
\begin{enumerate}[i)]\setlength\itemsep{-0.25em}
\item If $\gamma>\sqrt \rho$, then for any $\lambda\in\Gamma$ there exists a 1-periodic, continuous function $f_\lambda:\R\to\C$ and  random variables $X,X_\lambda$, such that the following holds:
$$B_j(n)=\rho \mathrm u_j \cdot n+\sum_{\lambda\in \Gamma}n^{\log_\rho\lambda}f_\lambda(\log_\rho n-X)X_\lambda+o_{\Prob}\big(n^{\log_\rho\gamma}\big).$$
\item If $\gamma = \sqrt \rho$ and for some $\lambda\in\sigma^{2}_A$ and  $i\in[J]$ we have  $\mathrm{w}_j\mathrm u^\lambda\neq0$ and $\Var[\mathrm v^\lambda L\mathrm{e}_i]>0$ for $\mathrm{w}_j$ defined as in \eqref{eq:w_j}, then there exists a 1-periodic, continuous function $\Uppsi:\R\to(0,\infty)$ and a random variable $X$  such that, conditionally on $\S$, the following convergence in distribution holds:
$$\frac{B_j(n)-\rho \mathrm u_j\cdot n}{\sqrt {n\log_{\rho} n}\ \Uppsi(\log_\rho n-X)}\distto \mathcal{N}(0,1),\qquad \text{as }n\to\infty.$$
\item If $\gamma < \sqrt \rho$ and for some $\lambda\in\sigma^{3}_A$ and some $i\in[J]$ we have  $\mathrm{w}_j\mathrm u^\lambda\neq0$ and $\Var[\mathrm v^\lambda L\mathrm{e}_i]>0$ with $\mathrm{w}_j$ defined as in \eqref{eq:w_j}, then there exists a 1-periodic, continuous function $\Uppsi:\R\to(0,\infty)$ and a random variable $X$  such that,
conditionally on $\S$, the following convergence in distribution holds:
		$$\frac{B_j(n)-\rho \mathrm u_j\cdot n}{\sqrt n\ \Uppsi(\log_\rho n-X)}\distto \mathcal{N}(0,1),\qquad \text{as }n\to\infty.$$
\end{enumerate} 
%	In all the three cases, the random variable $X$ is given by $X:=\log_{\gamma} W|\mathrm{u}|$, $W$ is the scalar random variable from Kesten-Stigum Theorem and $u$ is the right eigenvector of $A$, with $|\mathrm{u}|=\sum_{j=1}^J\mathrm{u}^j.$
\end{theorem}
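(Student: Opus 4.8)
The plan is to run everything through the embedding of $(B(n))_{n\ge0}$ into the discrete-time multitype Galton--Watson process $(Z_\ell)_{\ell\ge0}$ with reproduction law $L$ and mean matrix $A$, where $Z_\ell\in\N^J$ records the types present in generation $\ell$. If $K_n$ denotes the generation being emptied after $n$ draws and $M_n\in\{0,\dots,|Z_{K_n}|\}$ the number of its members already drawn (in uniformly random order), the dynamics of the two-urn model give the exact identities
\[
B(n)=\sum_{\ell=1}^{K_n}Z_\ell+Z^{\mathrm{part}}_n,\qquad n=\sum_{\ell=0}^{K_n-1}|Z_\ell|+M_n ,
\]
$Z^{\mathrm{part}}_n$ being the offspring produced by the first $M_n$ members of generation $K_n$. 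Writing $\xi_v=L^{(\tau(v))}$ for the fresh offspring vector attached to the $v$-th drawn individual (of type $\tau(v)$) and taking $j$-th coordinates, one gets the clean splitting
\[
B_j(n)-\rho\mathrm u_j n=\underbrace{\sum_v\big((\xi_v)_j-a_{j,\tau(v)}\big)}_{=:N_n}+\underbrace{\sum_v\big(a_{j,\tau(v)}-\rho\mathrm u_j\big)}_{=:D_n},
\]
the sums running over the first $n$ drawn individuals. Here $N_n$ is a sum of martingale differences for the filtration revealing individuals one at a time, with conditional variance $\sim n\sum_t\mathrm u_t\Var[L^{(t,j)}]$; it is of exact order $\sqrt n$ and, using the strong law for the type frequencies (which along the way re-proves Theorem~\ref{thm:main_SSLN}), asymptotically Gaussian.

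The regime-dependent part is $D_n$. Grouping by generation, $D_n=\sum_{\ell<K_n}\mathrm w_jZ_\ell+\frac{M_n}{|Z_{K_n}|}\,\mathrm w_jZ_{K_n}+R_n$, where $\mathrm w_j$ is exactly the vector \eqref{eq:w_j} and $R_n$ is a within-generation sampling remainder of order $\sqrt n$. The point of $\mathrm w_j$ is that it is orthogonal to the Perron eigendirection ($\mathrm w_j\mathrm u=0$, consistently with the first-order constant $\rho\mathrm u_j$ of Theorem~\ref{thm:main_SSLN}), so $\mathrm w_jZ_\ell$ lives on the sub-dominant scale. I would then feed in the classical spectral description of $(Z_\ell)$: for a simple eigenvalue $\lambda$ the (complex) martingale $M^\lambda_\ell:=\lambda^{-\ell}\mathrm v^\lambda Z_\ell$ has conditional quadratic variation $\asymp\sum_{m\le\ell}(\rho/|\lambda|^2)^m$, hence it is $L^2$-bounded and converges to a limit $X_\lambda$ if $|\lambda|>\sqrt\rho$, grows like $\sqrt\ell$ if $|\lambda|=\sqrt\rho$, and like $(\rho/|\lambda|^2)^{\ell/2}$ if $|\lambda|<\sqrt\rho$, while $\lambda=\rho$ produces the Kesten--Stigum variable $W$, non-degenerate on $\mathcal S$ by (GW4). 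Expanding $\mathrm w_jZ_\ell=\sum_{\lambda\neq\rho}(\mathrm w_j\mathrm u^\lambda)\lambda^\ell M^\lambda_\ell+(\text{Jordan terms})$ gives
\[
D_n=\sum_{\lambda\neq\rho}(\mathrm w_j\mathrm u^\lambda)\Big(\sum_{\ell<K_n}\lambda^\ell M^\lambda_\ell+\tfrac{M_n}{|Z_{K_n}|}\lambda^{K_n}M^\lambda_{K_n}\Big)+(\text{lower order}),
\]
and in every regime $|\lambda^{K_n}M^\lambda_{K_n}|\asymp\rho^{K_n/2}$ (times $\sqrt{K_n}$ when $|\lambda|=\sqrt\rho$); since $\rho^{K_n}\asymp n$, this is what promotes the threshold $\sqrt\rho$.

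It remains to pass to the variable $n$ and read off the limit. One shows there is a random $X$ (namely $\log_\rho(W/(\rho-1))$) with $K_n=\lfloor\log_\rho n-X\rfloor$ for all but an asymptotically negligible set of $n$, and $M_n/|Z_{K_n}|\to\theta_n:=(\rho^{\{\log_\rho n-X\}}-1)/(\rho-1)$, an explicit continuous function of the fractional part --- this is the origin of the $1$-periodicity. In regime (i), $M^\lambda_\ell\to X_\lambda$ for $\lambda\in\Gamma$, so the inner bracket above is $\lambda^{K_n}\big(\tfrac1{\lambda-1}+\theta_n\big)X_\lambda+o_{\Prob}(\gamma^{K_n})$; substituting $\lambda^{K_n}=n^{\log_\rho\lambda}\,\rho^{-X\log_\rho\lambda}\,\lambda^{-\{\log_\rho n-X\}}$ and absorbing the $\mathcal S$-measurable factor $(\mathrm w_j\mathrm u^\lambda)\rho^{-X\log_\rho\lambda}$ into $X_\lambda$ gives $n^{\log_\rho\lambda}f_\lambda(\log_\rho n-X)X_\lambda$ with $f_\lambda(u)=\lambda^{-\{u\}}\big(\tfrac1{\lambda-1}+(\rho^{\{u\}}-1)/(\rho-1)\big)$; since $f_\lambda(0)=f_\lambda(1^-)=1/(\lambda-1)$, the function $f_\lambda$ is continuous and $1$-periodic. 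All remaining eigenvalue contributions, together with $R_n$ and $N_n$, are $O_{\Prob}(\sqrt n)=o_{\Prob}(n^{\log_\rho\gamma})$ since $\gamma>\sqrt\rho$. In regimes (ii) ($\gamma=\sqrt\rho$) and (iii) ($\gamma<\sqrt\rho$) the surviving contributions --- the $\Gamma$-martingales in case (ii), and $N_n$, $R_n$ and the whole non-Perron part of the generation noise in case (iii) --- are all of the same order ($\sqrt{n\log_\rho n}$, resp.\ $\sqrt n$), so I would instead set up a single martingale central limit theorem over the filtration that reveals generations one by one and, within a generation, individuals one by one. The Lindeberg condition follows from the $(2+\delta)$-moment hypothesis, and the conditional quadratic variation, divided by $n\log_\rho n$ (resp.\ $n$), converges to $\Uppsi(\log_\rho n-X)^2$, a continuous $1$-periodic function of $\log_\rho n-X$: in case (ii) the cross terms from eigenvalue pairs $\mu\neq\bar\lambda$ drop by a factor $K_n$ so that only $|\mathrm w_j\mathrm u^\lambda|^2\,|\tfrac1{\lambda-1}+\theta_n|^2$ survives, in case (iii) the quadratic form in the generation noises stabilises, and in both the residual $n$-dependence enters only through $\theta_n$ and the $1$-periodic ratio $\rho^{K_n}W/n=(\rho-1)\rho^{-\{\log_\rho n-X\}}$; positivity $\Uppsi>0$ holds precisely because $\mathrm w_j\mathrm u^\lambda\neq0$ and $\Var[\mathrm v^\lambda L\mathrm e_i]>0$ keep the limiting variance bounded below.

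The main obstacle, I expect, is this last synthesis in regimes (ii)--(iii): proving that the conditional quadratic variation of the combined martingale, under the normalisation forced by the \emph{random} indices $K_n$ and $M_n$, converges to a \emph{deterministic}, continuous, $1$-periodic function of $\log_\rho n-X$ evaluated at $n$. Two features demand care: the lattice effect --- the generation index jumps by one at the random times $\sum_{\ell\le k}|Z_\ell|$, and one needs an asymptotic ``stationarity within a generation'' to turn $\{\log_\rho n-X\}$ and $\theta_n$ into genuine limiting objects, with enough uniformity to drive a CLT with a random number of summands --- and the need to reconcile the several correlated sources of $\sqrt n$-fluctuation (sub-$\sqrt\rho$ eigenvalue martingales, Jordan-block generalized-eigenvector noise, individual offspring noise, and sampling-without-replacement noise inside the current generation) into a single Gaussian limit with the stated variance. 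Everything else --- the $L^2$ bounds for $M^\lambda_\ell$, the moment bookkeeping in the error terms, and the SLLN normalisation --- is routine given classical multitype branching-process theory together with (GW1)--(GW3) and the $(2+\delta)$-moment assumption.
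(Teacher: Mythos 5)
Your route is genuinely different from the paper's: you embed $B(n)$ directly into the discrete-generation Galton--Watson process and decompose $B_j(n)-\rho\mathrm u_j n$ into offspring noise, within-generation sampling noise, and the drift $\sum_\ell\mathrm w_jZ_\ell$ expanded over the eigenvalue martingales $M^\lambda_\ell$, whereas the paper works with CMJ processes counted with the threshold characteristics $\Phi^{\mathsf t}_x,\Phi^j_x$, proves a functional limit theorem in $\mathcal D(\R)$ (tightness via the moment estimates of Appendix B), establishes continuity and strict positivity of the limiting Gaussian process, localizes the stopping times $\tau_n$, and only then evaluates at the random time. Your skeleton identity and the treatment of regime (i) look sound and could likely be completed as written (minor slip: eigenvalues with $\sqrt\rho<|\lambda|<\gamma$ contribute more than $O_{\Prob}(\sqrt n)$, though still $o_{\Prob}(n^{\log_\rho\gamma})$).

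The genuine gap is that in regimes (ii)--(iii) the proposal stops exactly where the difficulty of the theorem lies, and you say so yourself. Three things are asserted but not proved. First, the CLT has a random number of summands governed by $K_n$ and $M_n$, and these are \emph{not} asymptotically deterministic: $K_n\approx\log_\rho n-X$ with $X=\log_\rho(W/(\rho-1))$ a nondegenerate random variable correlated with every noise source in your martingale. An Anscombe-type argument therefore does not apply directly; the paper resolves this by proving process-level convergence (Theorem \ref{thm:clt-general}), continuity of the limit (Lemma \ref{lem:properties of the gaussian process}), localization of $\tau_n$ (Proposition \ref{prop:loc-stop-times}) and the evaluation-at-random-times Lemma \ref{lem:auxiliary lemma for cadlag processes}; your sketch offers no substitute for this step. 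Second, the stated convergence divides by $\Uppsi(\log_\rho n-X)$, a random, tree-measurable normalization; to conclude a standard $\mathcal N(0,1)$ limit conditionally on $\S$ you need stable (or at least joint) convergence of the martingale together with $W$, which your appeal to ``a single martingale CLT'' never addresses, while the paper's input \cite[Theorem 3.5]{kolesko-sava-huss-char} delivers $\stabSto$ precisely for this purpose. Third, strict positivity of $\Uppsi$ is claimed to ``hold precisely because $\mathrm w_j\mathrm u^\lambda\neq0$ and $\Var[\mathrm v^\lambda L\mathrm e_i]>0$,'' but in case (iii) the limiting variance is a combination of several correlated sources (offspring noise, hypergeometric sampling noise, all sub-$\sqrt\rho$ spectral noise, Jordan contributions), and showing it cannot degenerate is a nontrivial spectral argument in the paper (Proposition \ref{prop:pos-variances}, resting on the linear-independence Lemma \ref{lem:linear independence}); asserting it does not close the proof. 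Until these three points are carried out, the proposal is a plausible programme for cases (ii)--(iii) rather than a proof.
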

A more general and  quantified result where the periodic functions are explicitly defined is provided in Theorem \ref{thm:general_limit_theorem}.

Note that the above result slightly differs from the one urn model where the functions $\Uppsi$ and $f_\lambda$ are actually constants. What might be even more surprising is that, in our model, the phase transition occurs at $\sqrt{\rho}$ rather than at $\rho/2$ as observed in the P\'olya urn model. The heuristic explanation is as follows:  the growth in mean of the corresponding continuous-time branching process is driven by the semigroup $e^{tA}$. In particular, the leading asymptotic is $e^{t\rho}$ and the next order is $|e^{t \lambda_2}|=e^{t \mathrm{Re}\lambda_2}$. We anticipate observing Gaussian fluctuations in the branching process at the scale $\sqrt{e^{t\rho}}$ (with possible polynomial corrections), providing a natural threshold for $\mathrm{Re}\lambda_2$ in relation to $\rho/2$. On the other hand, the two-urn model is embedded into a discrete-time branching process whose growth in the mean is driven by the semigroup $A^n$ (or $(I+A)^n$ in the model with no replacement). Thus, the leading term is at scale $\rho^n$ and the subleading term at scale $|\lambda_2|^n$. As before, we expect to observe Gaussian fluctuations at scale $\sqrt{\rho^n}$, which induce natural distinctions depending on the relative locations of $|\lambda_2|$ and $\sqrt{\rho}$.

\textbf{Structure of the paper.} In Section \ref{sec:prelim} we introduce multitype branching processes $(Z_n)_{n\in\N}$ and Crump-Mode-Jagers processes $(Z_n^{\Phi})_{n\in\N}$ counted with a characteristic $\Phi:\Z\to \R^{1\times J}$. Then in Section \ref{sec:embed} we show how to relate our model $(B_j(n))$ with two interacting urns with a branching process $(Z^{\Phi^j}_n)_{n\in \N}$ counted with a characteristic $\Phi^j$. By applying  \cite[Proposition 4.1]{kolesko-sava-huss-char} to $(Z^{\Phi^j}_n)_{n\in\N}$, we then obtain first order asymptotics of $(B_j(n))_{n\in\N}$ in Theorem \ref{thm:main_SSLN} for any $j\in[J]$. The main result is proved in Section \ref{sec:main}. We conclude with  Appendix \ref{sec:apA} where the model with two interacting urns is illustrated on an example with deterministic replacement matrix, and  Appendix \ref{sec:apB} where higher moments estimates for $(Z_n^{\Phi})$ are given, for general characteristics $\Phi$. 

\section{Preliminaries}\label{sec:prelim}

For the rest of the paper $(\Omega,\mathcal{A},\Prob)$ is a probability space on which all the random variables and processes we consider are defined. We write $\AS$ for almost sure convergence, $\Probto$ for convergence in probability, $\distto$ for convergence in distribution and $\stabto$ for stable convergence (c.f. \cite{aldous-eagleson} for the definition and properties). We also use $\ProbS$ to denote the conditional probability $\Prob[\cdot|\S]$ and the corresponding convergences are denoted by $\ProbSto,\distSto,\stabSto$. We use the notations $\N =\{1,2,\ldots,\}$ and $\N_0 =\{0,1,2,\ldots,\}$.

\textbf{Stochastic processes.} Our convergence results of stochastic processes use the usual space $\mathcal{D}$ of right-continuous functions with left-hand limits, always equipped with the Skorokhod $\mathbf{J}_1$ topology. For a finite-dimensional vector space $E$ and any interval $J\subseteq [-\infty,\infty]$, we denote by $\mathcal{D}(J)=\mathcal{D}(J,E)$ the space of all right continuous functions from $J$ to $E$ with left-hand limits.

For a $n\times m$ matrix $A=(a_{ij})_{i,j}$ with $m,n\in\N$, the Hilbert–Schmidt norm of $A$, called also Frobenius norm, is defined as
\vspace{-0.25cm}
 $$\|A\|_{HS}=\Big(\sum_{i=1}^n\sum_{j=1}	^m|a_{ij}|^2\Big)^{1/2}.$$
Since for any vector its Hilbert-Schmidt norm coincides with its Euclidean norm,  for the rest of the paper we only write $\|\cdot\|$ instead of $\|\cdot\|_{HS}$. 

%\paragraph*{Stable convergence.} 
%A sequence $(Y_n)_{n\in\N}$ of random variables defined on $(\Omega,\mathcal F,\Prob)$ \emph{converges stably in distribution} to a random variable $Y$ defined on an extension of $(\Omega,\mathcal F,\Prob)$, written  $$Y_n\stabto Y,$$ if  for any bounded continuous function $f:\R\to\R$ and for any $\mathcal{F}$-measurable bounded random variable $Z$, it holds:
%$$\lim_{n\to\infty}\E[f(Y_n)Z]=\E[f(Y)Z].$$
%Alternatively (cf. \cite[Proposition 1.]{aldous-eagleson}), for any fixed $\mathcal F$-measurable random variable $X$, it holds $$(X,Y_n)\distto(X,Y), \quad \text{as }n\to\infty.$$ 

\textbf{Ulam-Harris tree $\U$.}
An Ulam-Harris  tree $\U$ is an infinite rooted tree with vertex set $V_{\infty}= \bigcup_{n \in \N_0} \N^n$, the set of all finite strings or words $i_1\cdots i_n$ of positive integers over $n$ letters, including the empty word $\varnothing$ which we take to be the root, and with an edge joining $i_1\cdots i_n$ and $i_1\cdots i_{n+1}$ for any $n\in \N_0$ and any $i_1, \cdots, i_{n+1}\in \N$. 
Thus every vertex $v=i_1\cdots i_n$ has outdegree $\infty$, and the children of $v$ are the words $v1,v2,\ldots$ and we let them have this order so that $\U$ becomes an infinite ordered rooted tree. We will identify $\U$ with its vertex set $V_{\infty}$, when no confusion arises. For vertices $v=i_1\cdots i_n$ we also write $v=(i_1,\ldots,i_n)$, and if $u=(j_1,\ldots,j_m)$ we write $uv$ for the concatenation of the words $u$ and $v$, that is  $uv=(j_1,\ldots,j_m,i_1,\ldots,i_n)$. The parent of $i_1\cdots i_n$ is $i_1\cdots i_{n-1}$.
%Further, if $k \leq m$, we set $u|_k \defeq u_1 \ldots u_k$ for the vertex $u$ truncated at height $k$.
Finally, for $u \in \U$, we use the notation $|u|=n$ for $u \in\N^n$, i.e.~$u$ is a word of length (or height) $n$, that is, at distance $n$ from the root $\varnothing$. 
%The family of ordered rooted trees can be identified with the set of all subtrees $\T_u$ of $\U$ rooted at $u\in V_{\infty}$, that have the property that for all $v\in V_{\T_u}$,
%$vi\in V_{\T_u}\  $  implies $ \ vj\in V_{\T_j}, \quad \text{for all }j\leq i$, where as usual $V_{\T_u}$ denotes the vertex set of the tree $\T_u$ (the tree rooted at $u$ together with all infinite paths going away from it) which is identified with $\T_u$ itself when no confusion arises.
For any $u\in V_{\infty}$, by $\T_u$ we mean the subtree of $\U$ rooted at $u$, that is $u$ together with all infinite paths going away from $u$, and for  $u,v\in \U$ we denote by $d(u,v)$ their graph distance.
% that is, the length of the shortest path between $u$ and $v$.
For trees rooted at $\varnothing$, we omit the root and we write only $\T$.
%Finally, by identifying $\T$ with its vertex set $V(\T)$, we will regard $\mathcal{T}$ as the family of all rooted (not rooted necessarly at $\varnothing$) subtrees $\T$ of $\U$ that satisfy:
%\begin{align*}
%& \varnothing  \in \T,\\
 %& v_1\cdots v_{n+1} \in \T\  \Rightarrow \  v_1\cdots v_{n}\in \T\\
 %& v_1\cdots v_ni  \in \T\  \Rightarrow \ v_1\cdots v_nj\in \T, \quad \forall \ j\leq i.\\
%\end{align*}
%Denote by $\mathcal{T}$ the set of all ordered rooted subtrees $\T$  of $\U$. 
For $J\in \N$, a $J$-type tree is a pair $(\T,\type)$ where $\T$ is a rooted subtree of $\U$ and $\type:\T\to\{1,\ldots,J\}$ is a function defined on the vertices of $\T$ that returns for each vertex $v$ its type $\type(v)$. %We will often omit the subindex $\T$ in the type function $\type$ if it is clear from the context the tree we are referring to. 
%We denote by $\mathcal{T}^{[J]}$ the set of all $J$-type trees, and elements of $\mathcal{T}^{[J]}$ is referred to as  $\T$ without explicitly mentioning the type function $\type$.

\textbf{Multitype branching processes.} Consider the random $J\times J$ matrix $L$ with independent column vectors $L^{(j)}$, for $1\leq j\leq J$ as in the introduction.
%The random variable $L^{(j)}$ is the offspring distribution vector produced by a particle of type $j$ and the  matrix $L^{\top}=(L^{(i,j)})_{i,j\in [J]}$ is the offspring distribution matrix, where for  $i,j\in [J]$ the random variable $L^{(i,j)}$ is the number of offspring of type $j$ produced by a particle of type $i$.
Multitype Galton-Watson trees are random variables taking values in the set of $J$-type trees $(\T,\type)$, where the type function $\type$ is random and defined in terms of the matrix $L$. Let $(L(u))_{u\in \U}$ be a family of i.i.d~copies of $L$ indexed over the vertices of $\U$.
For any $i\in [J]$, we define the random labeled tree $\T^{i}$ rooted at $\varnothing$, with the associated type function
$\type=\type^{i}:\T^{i}\to \{1,\ldots,J\}$ defined 
recursively as follows: $\varnothing\in\T^{i}$ and  $\type(\varnothing)=i$.
Now suppose that $u=j_1\ldots j_m\in\T^{i}$ with $\type(u)=j$, for some $j\in[J]$. Then 
$$j_1 \ldots j_m k\in \T^{i} \quad\text{iff}\quad k\le L^{(j,1)}(u)+\dots+L^{(j,J)}(u),$$ and we set
$\mathrm \type(u k)=\ell$ whenever 
$$L^{(j,1)}(u)+\dots+L^{(j,\ell-1)}(u)<k\le L^{(j,1)}(u)+\dots+L^{(j,\ell)}(u).$$

The multitype branching process  $Z_n=(Z^1_n, \dots , Z^J_n)$ associated with the pair $(\T^{i_0},\type)$, and starting from a single particle (or individual) of type $i_0\in [ J]$ at the root $\varnothing$, that is $\type(\varnothing)=i_0$, is defined as: $Z_0=\mathrm{e}_{i_0} $ and for $n\geq 1$
\begin{align*}
Z^i_n=\#\{u\in\T^{i_0}:|u|=n\text{ and }\type(u)=i\}, \quad \text{for } i\in [J],
\end{align*}
so $Z_n^i$ represents the number of individuals of type $i$ in the $n$-th generation, or  the number of vertices $u\in \T^{i_0}$ with $|u|=n$ and $\type(u)=i$.
The main results of \cite{kolesko-sava-huss-char} that we use in the current paper, hold under assumptions (GW1)-(GW3) on $(Z_n)_{n\in \N}$, that we suppose to hold here as well. In particular, $(Z_n)_{n\in\N}$ is a supercritical branching process. %Under assumptions (GW1, (GW2) and (GW4), Kesten-Stigum theorem  \cite{kesten1966} implies the existence of a scalar random variable $W$ such that 
%$\lim_{n\to\infty}\rho^{-n}Z_n=W\mathrm{u}$ almost surely.

\textbf{Spectral decomposition of $A$}. Recall the decomposition of the spectrum $\sigma_A=\sigma_A^{1}\cup\sigma_A^{2}\cup\sigma_A^{3}$ of the matrix $A$.
From the Jordan-Chevalley decomposition (which is unique up to the order of the Jordan blocks) of $A$ we infer the existence of projections $(\pi_\lambda)_{\lambda\in \sigma_A}$
that commute with $A$ and satisfy $\sum_{\lambda\in\sigma_A}\pi_{\lambda}=I$
and 
$$A\pi_{\lambda}=\pi_{\lambda}A=\lambda\pi_{\lambda}+N_{\lambda},$$
where $N_{\lambda}=\pi_{\lambda}N_{\lambda}=N_{\lambda}\pi_{\lambda}$ is a nilpotent matrix. Moreover,
for any $\lambda_1,\lambda_2\in\sigma_A$ it holds  $\pi_{\lambda_1}\pi_{\lambda_2}=\pi_{\lambda_1}\ind{\lambda_1=\lambda_2}$. If $\lambda\in\sigma_A$ is a simple eigenvalue of $A$ and $\mathrm u^\lambda, \mathrm v^\lambda$ are the corresponding left and right eigenvectors normalized in such a way that $\mathrm v^\lambda\mathrm u^\lambda=1$,  then 
$\pi_\lambda=\mathrm u^\lambda\mathrm v^\lambda$.
 If we write $N=\sum_{\lambda\in\sigma_A}N_{\lambda}$, then $N$ is also a nilpotent matrix and we have $N\pi_{\lambda}=N_{\lambda}$. Thus $A$ can be decomposed into the semisimple part $D=\sum_{\lambda\in\sigma_A}\lambda\pi_{\lambda}$ and the nilpotent part $N$ as  $A=D+N$.

For any $\lambda\in\sigma_A$, we denote by $d_{\lambda}\geq 0$ the integer such that $N_{\lambda}^{d_{\lambda}}\neq 0$
but $N_{\lambda}^{d_{\lambda}+1}=0$ (hence $d_{\lambda}+1$ is at most  the multiplicity of $\lambda$). So $d_{\lambda}=0$ if and only if $N_{\lambda}=0$, and this happens for all $\lambda$ if and only if $A$ is diagonalizable, that is $A$ has a complete set of $J$ independent eigenvectors. Since $\rho$ is a simple eigenvalue, we have $N_{\rho}=0$ and $d_{\rho}=0$, and $\pi_{\rho}=\mathrm u\mathrm v$.
We set
\begin{align*}
\pi^{(1)}=\sum_{\lambda\in\sigma^1_A}\pi_\lambda,\quad
\pi^{(2)}=\sum_{\lambda\in\sigma^2_A}\pi_\lambda,\quad
\pi^{(3)}=\sum_{\lambda\in\sigma^3_A}\pi_\lambda,
\end{align*}
and for  $i=1,2$, we define
$$A_i= A\pi^{(i)}+\big(I-\pi^{(i)}\big).$$
The process  $\big(W_n^{(i)}\big)_{n\in \N}$ defined by 
$$W_n^{(i)}= A_i^{-n}\pi^{(i)} Z_n$$ is a $\A_n$-martingale,
where $(\A_n)_{n\ge0}$ is the filtration $\A_n=\sigma(\{L(u):|u|\le n\})$.
According to \cite[Lemma 2.2]{kolesko-sava-huss-char}, $W_n^{(1)}$ converges in $\mathcal{L}^2(\Omega,\A,\Prob)$ to a random variable $W^{(1)}$ whose expectation is $\E W^{(1)}=\pi^{(1)}\mathrm e_{i_0}$. In particular, we have the convergence
$$\rho^{-n} Z_n\to \pi_\rho W^{(1)} = W \mathrm u,\quad \text{as } n\to\infty$$
in $\mathcal{L}^2$, and the random variable $W$ is given by $W= \mathrm v \cdot  W^{(1)}$ with $\E W=\mathrm v^{i_0}>0$.
The classical Kesten-Stigum theorem \cite{kesten1966} states that under (GW1), (GW2) and (GW4) the convergence above holds almost surely.  For the rest of the paper, when we use the random variable $W$, we always mean the limit random variable from the Kesten-Stigum theorem.

\subsection{Branching processes counted with a characteristic}

We recall that a characteristic of dimension one is a random function $\Phi:\Z\to\R^{1\times J}$, so for each $k\in \Z$, $\Phi(k)$ is a $\R^{1\times J}$-valued random variable defined on the same probability space $(\Omega,\A,\Pb)$ where the Galton-Watson process $(Z_n)_{n\in\N}$ and its genealogical tree $\T$ are defined. A {\it deterministic characteristic} is just a fixed function $\Phi:\Z\to\R^{1\times J}$. For a random function $\Phi:\Z\to \R^{1\times J}$ and the multitype Galton-Watson tree $\T$, the process $(Z^\Phi_n)_{n\in \N}$ which for any $n\in\N$ is defined as 
\begin{align*}
Z^\Phi_n=\sum_{u\in\T}\Phi_u(n-|u|)\mathrm{e}_{\type(u)}
\end{align*}
is called {\it multitype Crump-Mode-Jagers (shortly CMJ) process counted with characteristic $\Phi$} or simply {\it branching process counted with characteristic $\Phi$}, where $\left(\Phi_u\right)_{u\in\U}$ is an i.i.d.~copy of $\Phi$. First and second order asymptotics for $(Z^\Phi_n)_{n\in \N}$, under mild assumptions on $\Phi$ have been considered in \cite[Proposition 4.1]{kolesko-sava-huss-char} and in \cite[Theorem 3.5]{kolesko-sava-huss-char}, respectively. We use these two results below for a particular choice of the characteristic $\Phi$, and show how the branching process with this particular choice  of the characteristic can be related to the two urn model with alternating urns $\mathsf{U}_1$ and $\mathsf{U}_2$. 

\textbf{The choice of the characteristic.}
Let $U\sim\mathsf{Unif}[0,1]$ be an uniform random variable taking values in $[0,1]$, and defined on the probability space $(\Omega,\A,\mathbb{P})$. For every  threshold  $x\in[0,1)$ we define the characteristic $\Phi^{\mathsf{t}}_{x}:\N_0\to\R^{1\times J}$ by
\begin{align} \label{eq:char-urn-all}
\Phi_x^{\mathsf{t}}(k)\mathrm{e}_i=\ind{{k\geq 1}}+\ind{k=0}\ind{U \le x}, \quad \text{for } 1\leq i\leq J,
\end{align}
or, in a simplified way, for  $1\leq i\leq J$:
\begin{equation*}
\Phi_x^{\mathsf{t}}(k)\mathrm{e}_i=
\begin{cases}
1,&\quad \text{for } k\geq 1\\
1  \text{ with probability } \Prob(U\leq x)=x,& \quad \text{for } k=0 \\
0 \text{ with probability } \Prob(U> x)=1-x,& \quad \text{for } k=0 
\end{cases}.
\end{equation*}
Similarly, for $j\in[J]$, we define $\Phi_{x}^j:\N_0\to\R^{1\times J}$ by
\begin{align}
\label{eq:char-urn-type}
\Phi_x^j(k)\mathrm{e}_i= \ind{k\geq 1}\underbrace{\langle\mathrm{e}_j,L\mathrm{e}_i\rangle}_{=L^{(i,j)}}+\ind{k=0}\ind{U \le x}\langle\mathrm{e}_j,L\mathrm{e}_i\rangle, \quad \text{for } 1\leq i\leq J,
\end{align}
so $\Phi_x^j(k)\mathrm{e}_i=\Phi^{\mathsf{t}}_x(k)\mathrm{e}_i L^{(i,j)}$.
For the uniform random variable $U$ on $[0,1]$, let $(U_u)_{u\in\U}$ be an i.i.d.~copy of $U$. 
For $u\in \U$ let now $ \Phi^{\mathsf{t}}_{x,u}$ (respectively $ \Phi^j_{x,u}$) be defined through \eqref{eq:char-urn-all} (respectively \eqref{eq:char-urn-type}) with $U,L$  being replaced by  $U_u,\ L(u)$. Then  $\big((L(u),\Phi^{\mathsf{t}}_{x,u},\Phi_{x,u}^j)\big)_{u\in \U}$ is an i.i.d.~collection of copies of $(L,\Phi^{\mathsf{t}}_x,\Phi_x^j)$, for $j\in [J]$.

For the characteristic $\Phi_x^{\mathsf{t}}$ from \eqref{eq:char-urn-all}, threshold $x\in [0,1)$, and multitype Galton-Watson tree $\T$, the process $\big(Z^{\Phi_x^{\mathsf{t}}}_n\big)_{n\geq 0}$ counts then the total number of individuals $u\in \U$ born before time $n$ (including the root), and those born at time $n$ with $U_u\le x$ since we have
\begin{align*}
Z_n^{\Phi^{\mathsf{t}}_x}=\sum_{u\in \T}\Phi^{\mathsf{t}}_{x,u}(n-|u|)\mathrm{e}_{\type(u)}=\sum_{k=0}^{n-1}\sum_{u\in \T;  |u|=k}1+\sum_{u\in \T; \ |u|=n}\ind{U_u\leq x}
=\sum_{k=0}^{n-1}|Z_k|+\sum_{u\in \T; \ |u|=n}\ind{U_u\leq x},
\end{align*}
where $|Z_n|=\sum_{j=1}^JZ_n^j$ represents the size of the $n$-th generation of the Galton-Watson process.
On the other hand $L\mathrm e_i=L^{(i)}$ represents the random collection of individuals born from an individual of type $i$, while $L^{(i,j)}=\langle\mathrm{e}_j,L\mathrm{e}_i\rangle$ represents the random number of offspring of type $j$ of an individual of type $i$ for $i,j\in[J]$. Therefore $Z_n^{\Phi_x^j}$ counts the number of individuals of type $j$ born up to time $n$, and those of type $j$ born at time $n+1$ but with threshold $\le x$, so $Z_n^{\Phi^j_x}$ can be written as
\begin{align*}
Z_n^{\Phi^j_x}&=\sum_{k=0}^{n-1}\sum_{u\in\T;  |u|=k}\langle\mathrm{e}_j,L(u)\mathrm{e}_{\type(u)}\rangle+\sum_{u\in \T;|u|=n+1,\type(u)=j}\ind{U_u\leq x},
\end{align*}
since $Z_{k+1}^j=\sum_{u\in \T;|u|=k}\langle\mathrm{e}_j,L(u)\mathrm{e}_{\type(u)}\rangle$ represents the number of offspring of type $j$ in the $(k+1)-st$ generation and $|\{u\in \T;|u|=n+1,\type(u)=j\}|=Z_{n+1}^j$.
Summing up over all $j\in [J]$ gives 
$$Z_{n+1}^{\Phi^{\mathsf{t}}_x}-1= \sum_{j=1}^J Z_n^{\Phi^j_x},$$
and an application of \cite[Proposition 4.1]{kolesko-sava-huss-char} to $Z_n^{\Phi^{\mathsf{t}}_x}$ and $Z_n^{\Phi^j_x}$, for $j\in[J]$ yields the law of large numbers.
\begin{proposition}\label{prop:lln1}Under assumptions (GW1),(GW2), and (GW4) for any threshold $x\in[0,1)$ and characteristic $\Phi_x^{\mathsf{t}}$ (respectively $\Phi_x^j$, $j\in[J]$) defined in \eqref{eq:char-urn-all} (respectively \eqref{eq:char-urn-type}) we have:
\begin{align*}
\lim_{n\to\infty}\dfrac{Z_n^{\Phi^{\mathsf{t}}_x}}{\rho^n}&=\Big(\frac{1}{\rho-1}+x\Big)W,\quad \ProbS-\text{almost surely},\\
\lim_{n\to\infty}\dfrac{Z_n^{\Phi^j_x}}{\rho^n}&=\Big(\frac{1}{\rho-1}+x\Big)W\rho\mathrm  u_j,\quad \ProbS-\text{almost surely}.
\end{align*}
\end{proposition}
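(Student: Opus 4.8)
The plan is to derive both limits by applying \cite[Proposition 4.1]{kolesko-sava-huss-char} to the CMJ processes $\big(Z_n^{\Phi_x^{\mathsf t}}\big)_{n\ge 0}$ and $\big(Z_n^{\Phi_x^j}\big)_{n\ge 0}$. Recall that, under (GW1), (GW2), (GW4) and a mild integrability assumption on the characteristic, that proposition gives
$$\frac{Z_n^{\Phi}}{\rho^n}\longrightarrow \widehat\Phi(\rho)\,\mathrm u\,W\quad \ProbS\text{-almost surely},\qquad \widehat\Phi(\rho):=\sum_{k\ge 0}\rho^{-k}\,\E[\Phi(k)]\in\R^{1\times J},$$
where $W$ is the Kesten--Stigum limit from Section~\ref{sec:prelim}. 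So the proof reduces to two steps: first, checking that $\Phi_x^{\mathsf t}$ and $\Phi_x^j$ satisfy the hypotheses of \cite[Proposition 4.1]{kolesko-sava-huss-char}; second, evaluating the row vector $\widehat\Phi(\rho)$ and contracting it with $\mathrm u$ in each case.

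For the first step I would note that $\Phi_x^{\mathsf t}$ from \eqref{eq:char-urn-all} is bounded, with entries in $\{0,1\}$, so its integrability is immediate, while for $\Phi_x^j$ from \eqref{eq:char-urn-type} one has the entrywise bound $0\le \Phi_x^j(k)\mathrm e_i\le L^{(i,j)}$, and the required moment condition is supplied by (GW4), which gives $\E[L^{(i,j)}\log L^{(i,j)}]<\infty$ together with $\E[L^{(i,j)}]=a_{ji}<\infty$. I would also point out that $\Phi_{x,u}^j$ is a deterministic function of $(U_u,L(u))$, hence coupled with the offspring produced at $u$; this is harmless, since \cite[Proposition 4.1]{kolesko-sava-huss-char} admits characteristics that depend on the reproduction of the individual.

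For the second step, using that $U_u$ is independent of $L(u)$: from \eqref{eq:char-urn-all} we get $\E[\Phi_x^{\mathsf t}(k)\mathrm e_i]=\ind{k\ge 1}+x\,\ind{k=0}$ for every $i\in[J]$, so $\E[\Phi_x^{\mathsf t}(k)]=(\ind{k\ge 1}+x\,\ind{k=0})\mathrm 1$ and, summing the geometric series, $\widehat{\Phi_x^{\mathsf t}}(\rho)=\big(x+\tfrac{1}{\rho-1}\big)\mathrm 1$; since $\sum_{j\in[J]}\mathrm u_j=1$, contracting with $\mathrm u$ gives the scalar $x+\tfrac{1}{\rho-1}$, which is the first claimed limit. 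Likewise, from \eqref{eq:char-urn-type} we get $\E[\Phi_x^j(k)\mathrm e_i]=(\ind{k\ge 1}+x\,\ind{k=0})\,a_{ji}$, hence $\widehat{\Phi_x^j}(\rho)=\big(x+\tfrac{1}{\rho-1}\big)\mathrm e_j^{\top}A$, and the Perron eigen-equation $A\mathrm u=\rho\mathrm u$ turns the contraction $\widehat{\Phi_x^j}(\rho)\,\mathrm u$ into $\big(x+\tfrac{1}{\rho-1}\big)\rho\,\mathrm u_j$, which is the second claimed limit. A convenient consistency check is that summing the second limit over $j\in[J]$ yields $\rho$ times the first, in accordance with $Z_{n+1}^{\Phi_x^{\mathsf t}}-1=\sum_{j\in[J]}Z_n^{\Phi_x^j}$.

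I expect the only delicate point to be bookkeeping rather than analysis: matching our characteristics --- which are supported on $\N_0$ and, for $\Phi_x^j$, coupled to the vertex offspring --- to the precise integrability hypothesis and to the exact form of the limit in \cite[Proposition 4.1]{kolesko-sava-huss-char}. Once this dictionary is fixed, both statements fall out of the transform computation above and the identity $A\mathrm u=\rho\mathrm u$.
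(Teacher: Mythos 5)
Your computations are right and agree with the paper's limits: $\sum_{k\ge0}\rho^{-k}\E[\Phi^{\mathsf t}_x(k)]\,\mathrm u=(x+\tfrac1{\rho-1})$ and $\sum_{k\ge0}\rho^{-k}\E[\Phi^{j}_x(k)]\,\mathrm u=(x+\tfrac1{\rho-1})\rho\,\mathrm u_j$ via $A\mathrm u=\rho\mathrm u$, and your consistency check matches $Z^{\Phi^{\mathsf t}_x}_{n+1}-1=\sum_j Z^{\Phi^j_x}_n$. However, your route is genuinely different from the paper's, and the difference is exactly at the point you dismiss as bookkeeping. You invoke \cite[Proposition 4.1]{kolesko-sava-huss-char} directly for the \emph{random} characteristics $\Phi^{\mathsf t}_x$ and $\Phi^j_x$, asserting that its almost-sure part covers characteristics correlated with the reproduction $L(u)$ of the individual under only the $L\log L$ condition (GW4); that is precisely what the paper avoids doing. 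In the paper's proof, the cited proposition is applied only to the \emph{deterministic} characteristic $\ind{k\ge1}$ (giving $\rho^{-(n-1)}\sum_{k=0}^{n-1}|Z_k|\to W\sum_{k\ge0}\rho^{-k}$), and the randomness of the characteristic is handled by hand: the $L$-dependent part of $Z^{\Phi^j_x}_n$ telescopes, since $\sum_{u\in\T,|u|=k}\langle \mathrm e_j,L(u)\mathrm e_{\type(u)}\rangle=Z^j_{k+1}$, so the bulk is controlled by Kesten--Stigum, and the boundary term $\sum_{|u|=n(+1)}\ind{U_u\le x}$ is an i.i.d.\ Bernoulli thinning independent of the tree, handled by the ordinary strong law of large numbers. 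Your approach buys brevity and a clean transform formula, but it stands or falls with the exact hypotheses of the cited proposition; if its a.s.\ statement for random characteristics requires more than (GW4) (e.g.\ conditions of second-moment type), your argument as written has a gap, whereas the paper's decomposition needs nothing beyond Kesten--Stigum and the SLLN. To make your proof watertight without checking that fine print, note that the reduction you would need is exactly the paper's: split off the generation-$n$ (resp.\ $n+1$) thinning term and observe that the remaining count involves only deterministic characteristics of the tree.
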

\begin{proof}
Since for any fixed  $x\in[0,1)$ the random variables $(\ind{U_u\leq x})_{u\in\U}$ are i.i.d.~Bernoulli $\mathsf{Bern}(x)$, in view of the strong law of large numbers we obtain
$$\lim_{n\to\infty}\frac{1}{|Z_n|} \sum_{u\in \T; \ |u|=n}\ind{U_u\leq x}=x,\quad \ProbS\textit{-almost surely},$$
and similarly
$$\lim_{n\to\infty}\frac{1}{Z_n^j} \sum_{|u|=n,\type(u)=j}\ind{U_u\leq x}=x,\quad \ProbS\textit{-almost surely}.$$
For the deterministic characteristic $\ind{k\geq 1}$ and the corresponding branching process counted with this characteristic, by applying  \cite[Proposition 4.1 {\it i)}]{kolesko-sava-huss-char} we get
\begin{align*}
\frac{Z_n^{\Phi_x^{\mathsf{t}}}}{\rho^{n}}&=\frac{1}{\rho}\cdot\overbrace{\frac{1}{\rho^{n-1}}\sum_{k\geq 0}^{n-1}\sum_{|u|=k}1}^{\AS W\sum_{k\geq 0}\rho^{-k}}+\overbrace{\frac{1}{\rho^{n}}\sum_{|u|=n}\ind{U_u\leq x}}^{\AS xW}\\
	&\AS \Big(\frac{1}{\rho}\sum_{k\geq 0}\frac{1}{\rho^{k}}+x\Big)W=\Big(\frac{1}{\rho-1}+x\Big)W,\quad \textit{  as }n\to\infty,
	\end{align*}
and this shows the first part of the claim. For the second one, from Kesten-Stigum theorem we know that $\frac{Z_n^j}{\rho^n}\AS W\mathrm{u}_j$ as $n\to\infty$ for any $j\in[J]$, and for the characteristic $\Phi_x^j$,
since it holds
$$Z_n^{\Phi_x^j}=\sum_{k=0}^{n}Z_k^j+\sum_{u\in \T; \ |u|=n+1,\type(u)=j}\ind{U_u\leq x},$$
we obtain
\begin{align*}
\frac{Z_n^{\Phi_x^j}}{\rho^{n}}&=\overbrace{\frac{1}{\rho^{n}}\sum_{k=1}^{n}Z_k^j}^{\AS W\sum_{k\geq 0}\rho^{-k}\mathrm{u}_j}+\overbrace{\rho\frac{1}{\rho^{n+1}}\sum_{|u|=n+1,\type(u)=j}\ind{U_u\leq x}}^{\AS \rho xW \mathrm{u}_j}\\
	&\AS \Big(\frac{1}{\rho}\sum_{k\geq 0}\frac{1}{\rho^{k}}+x\Big)W\rho \mathrm{u}_j=\Big(\frac{1}{\rho-1}+x\Big)W\rho \mathrm{u}_j,\quad \textit{ as }n\to\infty,
	\end{align*}
	and the proof is completed.
\end{proof}

Following the notation from \cite{kolesko-sava-huss-char}, for every characteristic  $\Phi:\Z\to\R^{1\times J}$ we define 
two  vectors $\mathrm{x}_i(\Phi)=\sum_{k\in \N}\E[\Phi(k)]\pi^{(i)}A_i^{-k}$ for $i=1,2$.
In particular, since $\E[\Phi_x^{\mathsf{t}}(0)]=x\mathrm{1}$, we have 
\begin{align}\label{eq:vect-x2}
	\nonumber
\mathrm{x}_i\left(\Phi_x^{\mathsf{t}}\right)&=x\mathrm{1}\pi^{(i)}+\mathrm{1}\pi^{(i)}\sum_{k=1}^{\infty}A_i^{-k}=\mathrm{1}\pi^{(i)}\left(xI+\sum_{k=1}^{\infty}A_i^{-k}\right)\\
&=\mathrm{1}\pi^{(i)}\left(x + (A_i-I)^{-1}\right),\quad \text{for }i=1,2.
\end{align}
\begin{comment}
\textcolor{blue}{We will remove the blue part below in this general form, once we know if and in which form we need them!
For  $l=0,\dots,J$ constants
\begin{align}
\label{eq:definition of sigma_l}
\sigma_l^2(\Phi)\defeq \frac{\rho^{-l}}{(2l+1)(l!)^2}\sum_{\lambda\in\sigma^2_A}\Var \left[\mathrm{x} _2(\Phi) \pi_\lambda (A-\lambda I)^{l}L\right]\mathrm{u}.
\end{align}
Finally
\begin{align}
\label{eq:definition of sigma}
\sigma^2(\Phi)\defeq\sum_{k\in \Z}\rho^{-k}\Var\left[\Phi(k)+\Psi^\Phi(k)\right]\mathrm{u},
\end{align} 
where $\Psi^\Phi$ is the centered characteristic defined as
\begin{align*}
\Psi^\Phi(k)=
\sum_{l\in \Z}\E\Phi(k-l-1)A^l\mathsf{P}(k,l)(L-A),
\end{align*}
with matrices $\mathsf{P}(k,l)$:
\begin{align*}
\mathsf{P}(k,l)\defeq\begin{cases}
-\pi^{(1)}\ind{l<0}+\pi^{(2)}\ind{l\geq 0}+\pi^{(3)}\ind{l\geq 0},\quad \text{if }k\leq 0\\
-\pi^{(1)}\ind{l<0}-\pi^{(2)}\ind{l<0}+\pi^{(3)}\ind{l\geq 0},\quad \text{if }k>0.
\end{cases}
\end{align*}}
\end{comment}
For any random characteristic $\Phi:\Z\to\R^{1\times J}$ that satisfies the assumptions of  \cite[Theorem 3.5]{kolesko-sava-huss-char}, i.e.~(GW1)-(GW3) hold,   $\sum_{k\in \Z}\big\|\E[\Phi(k)]\big\|(\rho^{-k}+\vartheta^{-k})<\infty$ for some $\vartheta<\sqrt{\rho}$, and finally $\sum_{k\in \Z}\|\Var[\Phi(k)]\|\rho^{-k}<\infty$, we set 
\begin{equation}\label{eq:fluctuations-phi}
F^\Phi_n=\mathrm{x}_1(\Phi)A^n_1 W^{(1)}+\mathrm{x}_2(\Phi)A^n_2Z_0.
\end{equation}
Recall that the constants $\sigma_{\ell}^2$, for $\ell=0,\ldots, J-1$ are defined as
\begin{equation}\label{eq:sigma_l}
\sigma_{\ell}^2=\sigma_{\ell}^2(\Phi)= \frac{\rho^{-\ell}}{(2\ell+1)(\ell !)^2}\sum_{\lambda\in\sigma^2_A}\Var \Big[\mathrm{x} _2(\Phi) \pi_\lambda (A-\lambda I)^{\ell}L\Big]\mathrm{u}.
\end{equation}
Let $\ell$ be the maximal integer such that $\sigma_{\ell}(\Phi)>0$ and we set $\ell=-\frac12$ if there is no such integer. Then Theorem 3.5 of  \cite{kolesko-sava-huss-char} states that, for a standard normal variable $\mathcal{N}(0,1)$ independent of $W$, the following stable convergence holds
\begin{align}
\label{eq:clt}
\frac{Z^\Phi_n-F^\Phi_n}{n^{\ell+\frac12}\rho^{n/2}\sqrt{W}}\stabSto G^{\Phi}, \quad \text{as } n\to\infty,
\end{align} 
where $G^\Phi= \sigma_{\ell}(\Phi) \mathcal{N}(0,1)$ if not all $\sigma_{\ell}$ are zero, and  $G^\Phi= \sigma(\Phi)\mathcal{N}(0,1)$ otherwise, while $\sigma(\Phi)$  is  defined by
\begin{align}
	\label{eq:definition of sigma}
	\sigma^2(\Phi)=\sum_{k\in \Z}\rho^{-k}\Var\left[\Phi(k)+\Psi^\Phi(k)\right]\mathrm{u},
\end{align} 
and $\Psi^\Phi$ is the centered characteristic given by
\begin{align*}
	\Psi^\Phi(k)=
	\sum_{\ell\in \Z}\E\Phi(k-\ell-1)A^{\ell}\mathsf{P}(k,\ell)(L-A).
\end{align*}
Above, the matrices $\mathsf{P}(k, \ell)$, for $k,\ell\in\Z$ are defined as
\begin{align*}
	\mathsf{P}(k,\ell)=\begin{cases}
		-\pi^{(1)}\ind{\ell<0}+\pi^{(2)}\ind{\ell\geq 0}+\pi^{(3)}\ind{\ell\geq 0},\quad \text{if }k\leq 0\\
		-\pi^{(1)}\ind{\ell<0}-\pi^{(2)}\ind{\ell<0}+\pi^{(3)}\ind{\ell\geq 0},\quad \text{if }k>0.
	\end{cases}
\end{align*}

%
%The application of Theorem 3.5 of  \cite{kolesko-sava-huss-char} yields 
%\begin{theorem}[Theorem 3.5 of  \cite{kolesko-sava-huss-char}]
%	\label{thm:clt for CMJ}
%	Assume conditions $(GW1)-(GW3)$ hold and let
%	$\Phi:\Z\to\C^{1\times J}$ be a  random characteristic of dimension $d=1$ that satisfies \eqref{eq:main-assum1} and \eqref{eq:main-assum2}. Then for a standard normal random variable $\mathcal{N}$ the following dichotomy holds.
%	\begin{enumerate}[i)]
%		\item If $\sigma_l=0$ for all $0\le l\leq J-1$,  then there exists a constant $\sigma\ge0$ such that either $\sigma>0$ and
%		\begin{align*}
%		\frac{Z_n^{\Phi}-\mathrm{x}_1A^n_1 W^{(1)}-\mathrm{x}_2A^n_2Z_0 }{\rho^{n/2}}\stabto \sigma\sqrt W\cdot G,\quad \text{as } n\to\infty
%		\end{align*} 
%		or $\sigma=0$ and the left hand side above is a deterministic sequence converging to 0.
%		\item Otherwise, let $0\le l\le J-1$ be maximal such that $\sigma_l\neq0$. Then
%		\begin{align*}
%		\frac{Z_n^{\Phi}-\mathrm{x}_1A_1^n W^{(1)}-\mathrm{x}_2A_2^nZ_0 }{n^{l+\frac 12}\rho^{n/2}}\stabto \sigma_l\sqrt W\cdot G, \quad \text{as } n\to\infty.
%		\end{align*}
%		In both cases above the limiting normal variable $\mathcal{N}$ is independent of $W$.
%	\end{enumerate}
%\end{theorem}
%
%
%In this note we give yet another applications of our model of the discrete multitype branching processes with a characteristic.  Before, we will also need estimates for the higher moments of $(Z_n^{\Phi})_{n\in\mathbb{N}}$.

\section{The embedding of the urn model into the branching process}\label{sec:embed}

\textbf{Notation.}
We slightly abuse the notation and write $\Phi^{\mathsf{t}}$ (respectively $\Phi^j$, $j\in[J]$) for the whole family $\big\{\Phi^{\mathsf{t}}_x\big\}_{x\in[0,1)}$ (respectively $\big\{\Phi^j_x\big\}_{x\in[0,1)}$) of characteristics indexed over the threshold $x\in[0,1)$. We denote by $\mathcal C$ the set of characteristics $\Phi$ which are linear combinations of   $\Phi^{\mathsf{t}}$ and $\Phi^j$, for $j\in[J]$. Again by abuse of notation, by $\Phi\in\mathcal{C}$ we actually refer to the whole family of characteristics $\{\Phi_x\}_{x\in[0,1)}$, that is:
$$\Phi=\big\{\Phi_x\in\mathcal{C}:\ x\in[0,1)\big\}=\big\{a \Phi^{\mathsf{t}}_x+b \Phi^j_x:\ a,b\in\R,\ j\in[J], \ x\in [0,1)\big\}.$$

\textbf{Extension of $x\in[0,1)$ to $x\in\R$.}  Instead of working with thresholds $x\in[0,1)$ and corresponding characteristics $\Phi_x$, we can extend the domain of $x$ to the whole $\mathbb{R}$ as following.
For any $\Phi\in\mathcal C$ and any $x\in\R$ we define 
$$\Phi_{x}(k)=\Phi_{\{x\}}(k+\floor{x}).$$
The corresponding CMJ process $(Z_n^{\Phi_x})_{n\in\mathbb{N}}$ satisfies $Z_n^{\Phi_x}=Z_0^{\Phi_{x+n}}$ for every $n\in\N$ and $x\in\R$ and we finally define
$$\mathcal{Z}^{\Phi}(x)= Z_0^{\Phi_{x}},$$ 
and similarly $\Fphi(x)= F_0^{\Phi_{x}}$. For any $x\in\R$, \eqref{eq:clt} yields the existence of a Gaussian process $\{\Gphi(x);\ x\in\R\}$ such that the following convergence holds
\begin{align}\label{eq:fin-dim}
	\frac{\mathcal Z^{\Phi}(x+n)-\Fphi(n+x)}{n^{\ell+\frac12}\rho^{n/2}\sqrt{W}}\stabSto \Gphi(x), \quad \text{as } n\to\infty.
\end{align}
Cram\'er-Wold device implies that, in fact, the convergence holds for finite dimensional distributions and the limiting process $\Gphi$ is jointly Gaussian with  $\Gphi(x)\eqdist \rho^{\floor{x}/2}\sigma_\ell(\Phi_{\{x\}}) \mathcal N(0,1)$ or $\Gphi(x)\eqdist \rho^{\floor{x}/2}\sigma(\Phi_{\{x\}}) \mathcal N(0,1)$  depending on the value of the constants $\sigma_\ell$, respectively $\sigma$.
Furthermore, we  write $\Zt, \Ft,\Gt$ (respectively $\Zj,\Fj,\Gj$) for  $\Zphi, \Fphi,\Gphi$ if $\Phi=\Phi^{\mathsf{t}}$ (respectively $\Phi=\Phi^j$). 
%For the rest of the paper, for any $x\in[0,\infty)$, any $j\in[J]$ the following notation is used:
%$$\Zt(x):= Z_{\lfloor x\rfloor}^{\Phi^{\mathsf{t}}_{\{x\}}}\quad \text{and} \quad \Zj(x):= Z_{\lfloor x\rfloor}^{\Phi^{j}_{\{x\}}}.$$ 
%In particular, for the  linear combination $\Phi={\rho}\mathrm u_j\Phi^{\mathsf{t}}-\Phi^j$ of $\Phi^{\mathsf{t}}$ and $\Phi^j$, in view of the linearity of the branching process with characteristic $\Phi$, we can write
%$$\mathcal{Z}^{\Phi}(x)=\rho\mathrm{u}_j\Zt(x)-\Zj(x).$$
Since with probability one, all the random variables $(U_u)_{u\in \U}$ are different, the process $(\Zt(x))_{x\geq 0}$ at its jump point increases by 1. Therefore, the following stopping times are well-defined: for $k\in\N$, define $\tau_k$ as
\begin{equation}\label{eq:tau-urn}
\tau_k=\inf\big\{x\geq 0:\ \Zt(x)=k\big\}.
\end{equation}
\begin{remark} With the stopping times $(\tau_k)_{k\in\N}$ just introduced, we have
$$B_j(k)=\Zj(\tau_k),$$
and this is exactly the number of balls of type $j$ added to the two urns after $k$  draws, for which
we seek first and second order asymptotics. 
Our strategy is as follows: we first prove  functional limit theorems for the processes $\{\Zt(x);\ x\in \R\}$  and $\{\Zj(x);\ x\in\R\}$, and then we conclude the corresponding limit theorems for $B_j(k)$, for $j\in[J]$.
We start with the description of the leading term in the asymptotics of $\mathcal{Z}^{\Phi}(x)$, for any characteristic $\Phi\in \mathcal{C}$. \end{remark}

\textbf{Periodic functions.} For any $\lambda\in \C$, we introduce the function 
\begin{equation}\label{eq:lper}
l_\lambda:[0,\infty)\to\R\quad \text{defined as}\quad l_\lambda(x)= (1+(\lambda -1)\{x\})\lambda^{-\{x\}},
\end{equation}
where  $\lambda^t=e^{t\log \lambda}$ and $z\mapsto \log z$ is the principal branch of the logarithm. The function $l_\lambda$ is continuous and 1-periodic and it satisfies
$$\lambda^xl_{\lambda}(x)=\lambda^{\lfloor x\rfloor}(1+(\lambda-1)\{x\}).$$
 Moreover, the mapping $x\mapsto \lambda^xl_{\lambda}(x)$ equals $\lambda^x$ for integer $x$ and is linear in between.
\begin{proposition}\label{lln}
Assume (GW1),(GW2), and (GW4) hold, and for any $j\in[J]$ let $\Phi=a\Phi^{\mathsf{t}}+b\Phi^j$, with $a,b\in\R$. Then it holds
	\begin{align}
		\label{eq:first order in urn model}
		\lim_{x\to\infty}\frac{\mathcal{Z}^\Phi(x)}{\rho^{x}l_{\rho}(x)}=\lim_{x\to\infty}\frac{\mathcal{Z}^\Phi(x)}{\rho^{\lfloor x\rfloor}(1+(\rho-1)\{x\})}=(a+b\rho\mathrm{u}_j)\frac{1}{\rho-1}W,\quad \ProbS\text{-almost surely}.
	\end{align}
%	where the function $f:[0,\infty)\to\mathbb{R}$ is defined as $f(x)=\log_{\rho}(1+(\rho-1)x)-x$.
\end{proposition}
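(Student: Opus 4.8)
First I would reduce to the two ``building block'' characteristics. Since $\Phi\mapsto\mathcal Z^{\Phi}$ is linear in $\Phi$ (the i.i.d.\ marks $(L(u),\Phi^{\mathsf{t}}_{x,u},\Phi^j_{x,u})$ are coupled, so $\mathcal Z^{a\Phi^{\mathsf{t}}+b\Phi^j}=a\,\mathcal Z^{\Phi^{\mathsf{t}}}+b\,\mathcal Z^{\Phi^j}$), it suffices to prove, $\ProbS$-almost surely,
$$\lim_{x\to\infty}\frac{\mathcal Z^{\Phi^{\mathsf{t}}}(x)}{\rho^{x}l_\rho(x)}=\frac{W}{\rho-1}\qquad\text{and}\qquad\lim_{x\to\infty}\frac{\mathcal Z^{\Phi^{j}}(x)}{\rho^{x}l_\rho(x)}=\frac{\rho\mathrm u_j W}{\rho-1},$$
and then take the combination $a(\cdot)+b(\cdot)$. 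Unwinding the definitions, $\mathcal Z^{\Phi}(x)=Z_0^{\Phi_x}=Z_{\floor x}^{\Phi_{\{x\}}}$ (using $Z_n^{\Phi_y}=Z_0^{\Phi_{y+n}}$), while $\rho^{x}l_\rho(x)=\rho^{\floor x}\bigl(1+(\rho-1)\{x\}\bigr)$; the latter identity is exactly the second equality asserted in \eqref{eq:first order in urn model}, so what remains is a statement about $Z_{\floor x}^{\Phi_{\{x\}}}$, with both the integer level $\floor x$ and the threshold $\{x\}\in[0,1)$ moving as $x\to\infty$.

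The main obstacle is that $\{x\}$ does \emph{not} converge as $x\to\infty$, so Proposition~\ref{prop:lln1}, which is stated for a \emph{fixed} threshold, cannot be applied verbatim. I would get around this by a monotonicity sandwich. For $0\le x'\le x''<1$ one has $\Phi^{\mathsf{t}}_{x',u}(k)\mathrm e_i\le\Phi^{\mathsf{t}}_{x'',u}(k)\mathrm e_i$ pointwise, and the same for $\Phi^{j}$ because all entries of $L$ are non-negative; hence $Z_n^{\Phi^{\mathsf{t}}_{x'}}\le Z_n^{\Phi^{\mathsf{t}}_{x''}}$ and $Z_n^{\Phi^{j}_{x'}}\le Z_n^{\Phi^{j}_{x''}}$ for every $n$. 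Fix $\varepsilon\in(0,1)$ and a partition $0=x_0<\dots<x_m=1$ of mesh $<\varepsilon$; if $\{x\}\in[x_{i-1},x_i)$, then combining this monotonicity with the elementary bound $\rho^{\floor x}(1+(\rho-1)x_{i-1})\le\rho^{x}l_\rho(x)<\rho^{\floor x}(1+(\rho-1)x_i)$ gives, for $\Phi^{\mathsf{t}}$,
$$\frac{Z_{\floor x}^{\Phi^{\mathsf{t}}_{x_{i-1}}}}{\rho^{\floor x}\bigl(1+(\rho-1)x_i\bigr)}\ \le\ \frac{\mathcal Z^{\Phi^{\mathsf{t}}}(x)}{\rho^{x}l_\rho(x)}\ \le\ \frac{Z_{\floor x}^{\Phi^{\mathsf{t}}_{x_i}}}{\rho^{\floor x}\bigl(1+(\rho-1)x_{i-1}\bigr)},$$
and likewise for $\Phi^{j}$.

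Next I would apply Proposition~\ref{prop:lln1} at each of the finitely many thresholds $x_0,\dots,x_m$: $\ProbS$-a.s.\ $Z_n^{\Phi^{\mathsf{t}}_{x_i}}/\rho^n\to(\tfrac1{\rho-1}+x_i)W=\tfrac{1+(\rho-1)x_i}{\rho-1}W$, and $Z_n^{\Phi^{j}_{x_i}}/\rho^n\to\tfrac{1+(\rho-1)x_i}{\rho-1}\rho\mathrm u_j W$. The whole point of the normalising factor $l_\rho$ is that $\tfrac{(1/(\rho-1)+x)W}{1+(\rho-1)x}=\tfrac{W}{\rho-1}$ is \emph{independent of} $x$; bounding the two extreme ratios above over the (finitely many) indices $i$ and using that consecutive knots differ by less than $\varepsilon$ yields $\ProbS$-a.s.
$$\bigl(1-(\rho-1)\varepsilon\bigr)\frac{W}{\rho-1}\ \le\ \liminf_{x\to\infty}\frac{\mathcal Z^{\Phi^{\mathsf{t}}}(x)}{\rho^{x}l_\rho(x)}\ \le\ \limsup_{x\to\infty}\frac{\mathcal Z^{\Phi^{\mathsf{t}}}(x)}{\rho^{x}l_\rho(x)}\ \le\ \bigl(1+(\rho-1)\varepsilon\bigr)\frac{W}{\rho-1},$$
and the analogous bounds with $\rho\mathrm u_j W$ in place of $W$ for $\Phi^{j}$. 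Intersecting the $\ProbS$-null exceptional sets over $\varepsilon=1/k$, $k\in\N$, and letting $k\to\infty$ (using $W<\infty$ $\ProbS$-a.s., which is part of Kesten--Stigum) forces both limits to exist and equal $W/(\rho-1)$, resp.\ $\rho\mathrm u_j W/(\rho-1)$. Taking $a\,\mathcal Z^{\Phi^{\mathsf{t}}}+b\,\mathcal Z^{\Phi^{j}}$ then finishes the proof, and the first equality in \eqref{eq:first order in urn model} is immediate from $\rho^x l_\rho(x)=\rho^{\floor x}(1+(\rho-1)\{x\})$. The only genuinely non-routine ingredient is the monotonicity sandwich handling the oscillation of $\{x\}$; it lets one avoid a uniform-in-threshold law of large numbers (e.g.\ a Glivenko--Cantelli / DKW estimate over the growing samples $\{U_u:|u|=n\}$), which would be a heavier alternative route.
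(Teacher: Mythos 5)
Your argument is correct and takes essentially the same route as the paper: reduce by linearity to $\Phi^{\mathsf t}$ and $\Phi^j$, use the fixed-threshold law of large numbers (Proposition \ref{prop:lln1}) along integer shifts, and control the oscillating fractional part via monotonicity of the characteristic in the threshold together with the continuity of $l_\rho$. The only difference is cosmetic: you discretize the threshold with a finite mesh and let the mesh $1/k\to 0$, whereas the paper extracts a subsequence with convergent fractional part and sandwiches with $\pm\delta$ — the same monotonicity sandwich either way (your version makes the countable null-set bookkeeping a bit more explicit; the endpoint threshold $x_m=1$, not covered by the family indexed by $[0,1)$, is handled by noting $Z_n^{\Phi^{\mathsf t}_1}=Z_{n+1}^{\Phi^{\mathsf t}_0}$).
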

\begin{proof}
Because of the linearity of  CMJ processes, for $x\in \R$ it holds
$$\mathcal{Z}^{\Phi}(x)=a\Zt(x)+b\Zj (x)\quad \text{ for any } a,b\in\R,\ \text{ and }j\in[J],$$
and it suffices to prove the $\ProbS$-almost sure convergence for $\Zt(x)$ and $\Zj(x)$ separately, as $x\to\infty$. 
% Together with the continuous mapping theorem the claim follows. 
\\
\textbf{Case 1: $x\in[0,1)$.} For $n\in\N$ and $x\in[0,1)$, since $\Zt(x+n)=Z_n^{\Phi^{\mathsf{t}}_x}$, in view of Proposition \ref{prop:lln1}  we get
\begin{equation}\label{eq:as-x01}
\frac{\Zt(x+n)}{\rho^n(1+(\rho-1)x)}\to\frac{1}{\rho-1}W,\quad \ProbS\textit{-almost surely as } n\to\infty.
\end{equation}
\textbf{Case 2: $x\in[0,\infty)$.} This case can be reduced to the previous one where $x\in[0,1)$ as following.
In view of equation \eqref{eq:as-x01}, for any $x\in [0,\infty)$, with $m=n+\lfloor x \rfloor$ we obtain
\begin{align*}
\frac{\Zt(x+n)}{\rho^{x+n}l_\rho(x)}=\frac{\Zt(\lfloor x\rfloor+n+\{x\})}{\rho^{\lfloor x\rfloor+n}(1+(\rho-1)\{x\})}=\frac{\Zt(\{x\}+m)}{\rho^{m}(1+(\rho-1)\{x\})}\AS \frac{1}{\rho-1}W,\quad\textit{ as } m\to \infty,
\end{align*}
where in the last equation above we have used that 
$$\rho^{x+n}l_\rho(x)=\rho^{\lfloor x\rfloor+n}(1+(\rho-1)\{x\})=\rho^{x+n}l_\rho(x+n).$$
We still have to prove that the above $\ProbS$-almost sure convergence holds for $x\to\infty$, that is we have
\begin{align*}
\lim_{x\to\infty}\frac{\Zt(x)}{\rho^{x}l_{\rho}(x)}=\frac{1}{\rho-1}W,\quad \ProbS\text{-almost surely}.
\end{align*}
Indeed, from any sequence tending to infinity we may choose a subsequence $(x_n)$ such that $\{x_n\}$ converges to some $x_0\in\R$. Then for any $\delta>0$ and large $n$, in view of
$$\Zt(\lfloor x_n\rfloor+ x_0-\delta)=Z_{\lfloor\lfloor x_n\rfloor+ x_0-\delta\rfloor}^{\Phi^{\mathsf{t}}_{\{\lfloor x_n\rfloor+x_0-\delta\}}}=Z_{\lfloor\lfloor x_n\rfloor+ x_0-\delta\rfloor}^{\Phi^{\mathsf{t}}_{\{x_0-\delta\}}},$$
we have
\begin{align*}
&\frac{\Zt(\lfloor x_n\rfloor+ x_0-\delta)}{\rho^{\lfloor x_n\rfloor+ x_0-\delta}l_\rho(x_0-\delta)}\cdot\rho^{x_0-\{x_n\}-\delta} \cdot\frac{l_{\rho}(x_0-\delta)}{l_{\rho}(x_n)}
\le
\frac{\Zt(x_n)}{\rho^{x_n}l_{\rho}(x_n)}\\
&
\le
\frac{\Zt(\lfloor x_n\rfloor+ x_0+\delta)}{\rho^{\lfloor x_n\rfloor+ x_0+\delta}l_{\rho}(x_0+\delta)}\rho^{x_0-\{x_n\}+\delta}\cdot\frac{l_{\rho}(x_0+\delta)}{l_{\rho}(x_n)}.
\end{align*}
Taking first the limit as $n$ goes to infinity and then as $\delta$ goes to 0 we get the desired convergence since  $x\mapsto l_{\rho}(x)$ is uniformly continuous.
The same argument can be used in order to show that for any $j\in[J]$ we have 
$$\lim_{x\to\infty}\frac{\Zj(x)}{\rho^{x}l_\rho(x)}=\frac{1}{\rho-1}W\rho\mathrm{u}_j, \quad \ProbS-\text{almost surely},$$
and this proves the claim. 
\end{proof}
An immediate consequence of Proposition \ref{lln} is the following corollary.
\begin{corollary} Under the assumptions of Proposition \ref{lln}, for $\Phi={\rho}\mathrm u_j\Phi^{\mathsf{t}}-\Phi^j$, we have
\begin{align*}
		\lim_{x\to\infty}\frac{\mathcal{Z}^\Phi(x)}{\rho^{x}l_\rho(x)}=\lim_{x\to\infty}\frac{\mathcal{Z}^\Phi(x)}{\rho^{\lfloor x\rfloor}(1+(\rho-1)\{x\})}=0,\quad \ProbS\text{-almost surely}.
	\end{align*}
\end{corollary}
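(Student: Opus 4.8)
The plan is to apply Proposition~\ref{lln} twice with a specific choice of the coefficients $a,b$ and then exploit linearity of CMJ processes. Recall that for $\Phi = a\Phi^{\mathsf{t}} + b\Phi^j$ with $a,b\in\R$, Proposition~\ref{lln} gives
\[
\lim_{x\to\infty}\frac{\mathcal{Z}^\Phi(x)}{\rho^x l_\rho(x)} = (a + b\rho\mathrm{u}_j)\frac{1}{\rho-1}W,\qquad \ProbS\text{-almost surely}.
\]
For the corollary we take $a = \rho\mathrm{u}_j$ and $b = -1$, so that $\Phi = \rho\mathrm{u}_j\Phi^{\mathsf{t}} - \Phi^j$. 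Then the prefactor becomes $a + b\rho\mathrm{u}_j = \rho\mathrm{u}_j - \rho\mathrm{u}_j = 0$, and the right-hand side vanishes identically. This immediately yields
\[
\lim_{x\to\infty}\frac{\mathcal{Z}^\Phi(x)}{\rho^x l_\rho(x)} = 0,\qquad \ProbS\text{-almost surely}.
\]

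It only remains to note the two equivalent forms of the denominator. By the identity recorded just after \eqref{eq:lper}, namely $\lambda^x l_\lambda(x) = \lambda^{\lfloor x\rfloor}(1 + (\lambda-1)\{x\})$ applied with $\lambda = \rho$, we have $\rho^x l_\rho(x) = \rho^{\lfloor x\rfloor}(1 + (\rho-1)\{x\})$, so the two displayed limits in the corollary statement coincide and both equal $0$.

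There is essentially no obstacle here: the corollary is a direct specialization of Proposition~\ref{lln}, and the linearity $\mathcal{Z}^{a\Phi^{\mathsf{t}} + b\Phi^j}(x) = a\Zt(x) + b\Zj(x)$ used in the proof of that proposition already handles the combination of characteristics. The only thing to check is the arithmetic $a + b\rho\mathrm{u}_j = 0$ for the chosen coefficients, which is immediate. One could also phrase the argument directly in terms of the two separate limits from the proof of Proposition~\ref{lln}: since $\Zt(x)/(\rho^x l_\rho(x)) \to \tfrac{1}{\rho-1}W$ and $\Zj(x)/(\rho^x l_\rho(x)) \to \tfrac{1}{\rho-1}W\rho\mathrm{u}_j$ almost surely on $\S$, subtracting $\rho\mathrm{u}_j$ times the first from the second gives the claim without any cancellation subtlety.
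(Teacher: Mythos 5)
Your argument is correct and is exactly how the paper treats this statement: the corollary is recorded there as an immediate consequence of Proposition \ref{lln}, obtained by taking $a=\rho\mathrm{u}_j$, $b=-1$ so that the prefactor $a+b\rho\mathrm{u}_j$ vanishes. Your additional remark that one can equivalently subtract the two separate almost sure limits for $\Zt$ and $\Zj$ is the same computation and needs no further justification.
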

Also, the strong law of large numbers for $(B_j(k))_{k\in\N}$ follows immediately from Proposition \ref{lln}.
\begin{proof}[Proof of Theorem \ref{thm:main_SSLN}]
Since $\tau_k$ goes to infinity as $k$ does, we have
\begin{align*}
\lim_{k\to\infty}\frac{B_j(k)}{k}&=\lim_{k\to\infty}\frac{\Zj(\tau_k)}{\Zt(\tau_k)}=\lim_{k\to\infty}\frac{\Zj(\tau_k)}{\rho^{\lfloor \tau_k\rfloor}(1+(\rho-1)\{\tau_k\})}\cdot \frac{\rho^{\lfloor \tau_k\rfloor}(1+(\rho-1)\{\tau_k\})}{\Zt(\tau_k)}\\
&=\frac{1}{\rho-1}W\rho\mathrm{u}_j\cdot \frac{1}{\frac{1}{\rho-1}W}=\rho\mathrm{u}_j,\quad \ProbS\text{-almost surely}, 
\end{align*}
and this finishes the proof.
\end{proof}

\section{Proof of the main result}\label{sec:main}

The proof is completed in several steps:
\vspace{-0.25cm}
\begin{itemize}\setlength\itemsep{0em}
\item In Lemma \ref{lem:difference of composition of F} we investigate compositions of the fluctuations $\Ft$ and $\Fj$.
\item In Theorem \ref{thm:clt-general} we prove weak convergence of the processes $\mathcal{X}^{\mathsf{t}}=\Zt-\Ft$ and $\mathcal{X}^j=\Zj-\Fj$ (rescaled appropriately) to Gaussian processes $\Gt$ and $\Gj$ respectively, for any $j\in[J]$.
\item Continuity and strict positivity of the variances of the limiting processes $\Gt$ and $\Gj$ are analysed in Proposition \ref{prop:pos-variances} and in Lemma \ref{lem:properties of the gaussian process}.
\item Localization of the stopping times $\tau_n$ is done in Proposition \ref{prop:loc-stop-times}.
\item Finally, the limit theorems for $B_j(n)$ are given in Proposition \ref{lem:clt for fluctuations} and in Theorem \ref{thm:general_limit_theorem}.
\end{itemize}

\subsection{Leading asymptotic terms} 

We start with the description of the leading term in the asymptotics of $\Zt$ and $\Zj$ respectively for any $j\in[J]$. We recall first that for a characteristic $\Phi\in\mathcal{C}$ the  leading term in the asymptotics of $\Zphi$  is given by $\Fphi$ and for simplicity of notation, for $x\in\R$ we write 
$$\mathcal{X}^{\Phi}(x)=\Zphi(x)-\Fphi(x).$$
In particular, for $\Phi=\Phi^{\mathsf{t}}$ and $\Phi=\Phi^j$ respectively, we write 
\begin{align*}
\mathcal{X}^{\mathsf{t}}(x)=\Zt(x)-\Ft(x)\quad \text{and} \quad
\mathcal{X}^j(x)=\Zj(x)-\Fj(x).
\end{align*}

%Since both $\Ft$ and $\Fj$ at arguments $x\in\mathbb{N}$ are linear combinations of exponential functions and between two consecutive integers are linear, we can use the functions $l_{\lambda}$ from \eqref{eq:lper} and write for $x\in [0,\infty)$
%\textcolor{blue}{the two sums below are over the eigenvalues in $\sigma_A^{1}$}
%\begin{align*}
%	\Ft(x)=\sum_{\lambda\in\sigma_A}c_{\lambda}W_{\lambda}\lambda^xl_{\lambda}(x)\quad
%	\text{and}\quad
%	\Fj(x)=\sum_{\lambda\in\sigma_A}c_{\lambda}^jW_{\lambda}\lambda^xl_{\lambda}(x)
%\end{align*}
%for  constants $c_\lambda,c^j_\lambda$ and scalar random variables $W_\lambda$, $\lambda\in \sigma_A$. 
%\textcolor{blue}{How are the random variables $W_{\lambda}$ defined in terms of  $\pi_{\lambda}W^{(1)}$?}
%Since $\Ft$ is eventually increasing almost surely, the inverse function $(\Ft)^{-1}$ is well defined for
%large arguments.
%For simplicity of notation, we write $(\Ft)^{-1}=\Fi$.
\begin{lemma}\label{lem:difference of composition of F}
Assume  (GW1)-(GW3) hold. Then for sufficiently large argument the inverse function $\Fi=(\Ft)^{-1} $ is well defined and for every $j\in[J]$ we have 
	\begin{align}
		\label{eq:composition is asymptotic linear}
		\lim_{t\to\infty}\sup_{s\ge 1}s^{-1}\Big|\Fj\big(\Fi(t+s)\big)-\Fj\big(\Fi(t)\big)-\rho \mathrm{u}_j s\Big| =0,\quad \ProbS\textit{-almost surely}.
	\end{align}
\end{lemma}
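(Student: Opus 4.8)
The plan is to put $\Ft$ into the shape ``leading term $+$ genuinely smaller remainder'' (which also gives invertibility of $\Ft$ for large argument and the localisation $\Fi(t)=\log_\rho t+O(1)$), to show via the vector $\mathrm w_j$ from \eqref{eq:w_j} that $\Fj=\rho\mathrm u_j\Ft+S$ with $S$ of that same small order, and then to substitute $t\mapsto\Fi(t)$, where the leading terms cancel identically and only $\sup_{s\ge1}s^{-1}|S(\Fi(t+s))-S(\Fi(t))|$ remains. From \eqref{eq:fluctuations-phi} and $\Ft(x)=F_0^{\Phi^{\mathsf t}_x}$, with $n=\lfloor x\rfloor$, $y=\{x\}$, one has $\Ft(x)=\mathrm x_1(\Phi^{\mathsf t}_y)A_1^{\,n}W^{(1)}+\mathrm x_2(\Phi^{\mathsf t}_y)A_2^{\,n}Z_0$, and $\mathrm x_i(\Phi^{\mathsf t}_y)=\mathrm 1\pi^{(i)}\!\left(yI+(A_i-I)^{-1}\right)$ by \eqref{eq:vect-x2}; in particular $\Ft$ is continuous and affine on each $[n,n+1]$. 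Splitting $A_1^{\,n}=\rho^{\,n}\pi_\rho+A_1^{\,n}(\pi^{(1)}-\pi_\rho)+(I-\pi^{(1)})$, and using $\mathrm x_i(\Phi)\pi^{(i)}=\mathrm x_i(\Phi)$, $\pi_\rho=\mathrm u\mathrm v$, $\mathrm v W^{(1)}=W$, $\mathrm x_1(\Phi^{\mathsf t}_y)\mathrm u=y+\tfrac1{\rho-1}$, together with the Jordan bounds $\|A_1^{\,n}(\pi^{(1)}-\pi_\rho)\|=O(n^{d_1}\gamma^{\,n})$ and $\|A_2^{\,n}\pi^{(2)}\|=O(n^{d_2}\rho^{\,n/2})$, I obtain
\[
\Ft(x)=\tfrac{W}{\rho-1}\,g(x)+R_{\mathsf t}(x),\qquad g(x):=\rho^{x}l_\rho(x)=\rho^{\lfloor x\rfloor}\bigl(1+(\rho-1)\{x\}\bigr)
\]
with $l_\rho$ from \eqref{eq:lper} and $|R_{\mathsf t}(x)|\le C\,x^{D}\gamma_1^{\,x}$ $\ProbS$-a.s., where $\gamma_1:=\max(\gamma,\sqrt\rho)<\rho$, $D=\max(d_1,d_2)$ and $C=C(\omega)<\infty$ (it involves $\|W^{(1)}\|$, which is a.s.\ finite by (GW3)). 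Since $W>0$ $\ProbS$-a.s.\ by Kesten--Stigum (noting (GW3)$\Rightarrow$(GW4)) and $\gamma_1/\rho<1$, for $\ProbS$-a.e.\ $\omega$ there is $N=N(\omega)$ so that $\Ft$ is continuous, strictly increasing on $[N,\infty)$ (slope on $[n,n+1]$ equal to $W\rho^{\,n}+O(n^{D}\gamma_1^{\,n})\ge\tfrac12 W\rho^{\,n}$) with $\Ft(x)\to\infty$; hence $\Fi:=(\Ft)^{-1}$ is well defined, continuous, increasing and affine on the relevant intervals for large argument. From $\Ft(x)=(1+o(1))\tfrac{W}{\rho-1}g(x)$ and $\rho^{\lfloor x\rfloor}\le g(x)\le\rho^{\lfloor x\rfloor+1}$ one reads off $\Fi(t)=\log_\rho t+O(1)$, $\rho^{\Fi(t)}\asymp t$, and $(\Ft)'(x)\ge c_1\rho^{x}$ on $[N,\infty)$, i.e.\ $\Fi'(t)\le c_1^{-1}\rho^{-\Fi(t)}$ a.e.

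\textbf{Step 2 ($\Fj=\rho\mathrm u_j\Ft+$ lower order).} Since $\E[\Phi^j_x(k)]=\mathrm e_j^\top A$ for $k\ge1$ and $\E[\Phi^j_x(0)]=x\,\mathrm e_j^\top A$, the same computation gives $\mathrm x_i(\Phi^j_y)=\mathrm e_j^\top A\,\pi^{(i)}\!\left(yI+(A_i-I)^{-1}\right)$, whence
\[
\mathrm x_i(\Phi^j_y)-\rho\mathrm u_j\,\mathrm x_i(\Phi^{\mathsf t}_y)=\mathrm w_j\,\pi^{(i)}\!\left(yI+(A_i-I)^{-1}\right),\qquad \mathrm w_j=\mathrm e_j^\top A-\rho\mathrm u_j\mathrm 1 .
\]
The key identity is $\mathrm w_j\mathrm u=\mathrm e_j^\top A\mathrm u-\rho\mathrm u_j=\rho\mathrm u_j-\rho\mathrm u_j=0$, so $\mathrm w_j\pi_\rho=(\mathrm w_j\mathrm u)\mathrm v=0$ and $\mathrm w_j\pi^{(1)}=\mathrm w_j(\pi^{(1)}-\pi_\rho)$. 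Therefore $\Fj(x)=\rho\mathrm u_j\,\Ft(x)+S(x)$ with
\[
S(x)=\mathrm w_j(\pi^{(1)}-\pi_\rho)\bigl(\{x\}I+(A_1-I)^{-1}\bigr)A_1^{\lfloor x\rfloor}W^{(1)}+\mathrm w_j\pi^{(2)}\bigl(\{x\}I+(A_2-I)^{-1}\bigr)A_2^{\lfloor x\rfloor}Z_0 ,
\]
which is continuous, affine on each $[n,n+1]$, and by the same Jordan bounds satisfies $|S(x)|\le C\,x^{D}\gamma_1^{\,x}$ $\ProbS$-a.s.; in particular, on any $[u,u']$ with $u'-u$ bounded and $u$ large, $S$ is Lipschitz with constant $O(u^{D}\gamma_1^{\,u})$. (When $\gamma<\sqrt\rho$ one has $\pi^{(1)}=\pi_\rho$ and $\pi^{(2)}=0$, so $S\equiv0$ and the assertion is immediate; below assume $\beta:=\log_\rho\gamma_1\in(0,1)$.)

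\textbf{Step 3 (inversion, two regimes of $s$) and the main difficulty.} Composing with $\Fi$ and using $\Ft(\Fi(t))=t$ gives $\Fj(\Fi(t))=\rho\mathrm u_j\,t+S(\Fi(t))$, so for $t\ge\Ft(N)$ and $s\ge1$ the expression in \eqref{eq:composition is asymptotic linear} equals $S(\Fi(t+s))-S(\Fi(t))$. Put $u=\Fi(t)$, $u'=\Fi(t+s)\ge u$. If $1\le s\le t$: $u'-u=\int_t^{t+s}\Fi'(v)\,\dd v\le s\,c_1^{-1}\rho^{-u}=O(s/t)$, which is bounded, so by the Lipschitz bound for $S$ and $u=\log_\rho t+O(1)$,
\[
|S(u')-S(u)|\le(u'-u)\,O\bigl(u^{D}\gamma_1^{\,u}\bigr)\le s\cdot O\bigl((\log t)^{D}t^{\beta-1}\bigr),
\]
hence $s^{-1}|S(u')-S(u)|\le O((\log t)^{D}t^{\beta-1})$. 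If $s>t$: since $t+s<2s$ and $|S(\Fi(v))|=O((\log v)^{D}v^{\beta})$,
\[
s^{-1}|S(u')-S(u)|\le s^{-1}\bigl(|S(u')|+|S(u)|\bigr)\le O\bigl((\log s)^{D}s^{\beta-1}\bigr)\le O\bigl((\log t)^{D}t^{\beta-1}\bigr),
\]
the last step because $v\mapsto(\log v)^{D}v^{\beta-1}$ is eventually decreasing and $s>t$. Taking $\sup_{s\ge1}$ and letting $t\to\infty$ yields \eqref{eq:composition is asymptotic linear} ($\ProbS$-a.s., since all implicit constants, $N$ and $C$, are a.s.\ finite and $W>0$ a.s.). The technical core is the projection/Jordan bookkeeping in Steps 1--2 — especially that $\mathrm w_j\pi_\rho=0$ cancels the leading term of $\Fj-\rho\mathrm u_j\Ft$ — while the one genuinely delicate analytic point is the case split above: for small $s$ the crude bound $|S(u')|+|S(u)|=O((\log t)^{D}t^{\beta})$ is far larger than $s$, and one must instead exploit that $\Fi$ contracts lengths by the factor $\asymp t^{-1}$.
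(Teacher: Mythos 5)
Your proof is correct, but the core step is organized differently from the paper's. The paper also starts from \eqref{eq:fluctuations-phi} and \eqref{eq:vect-x2}, but it only extracts the soft asymptotics $\Ft(x)=\tfrac{\rho^{x}}{\rho-1}l_\rho(x)W+o(\rho^x)$, $(\Ft)'(x)=\rho^{\floor x}W+o(\rho^x)$ and the analogous statements for $\Fj$ (with leading coefficient $\rho\mathrm u_j W$), and then finishes by the chain rule: $(\Fj\circ\Fi)'(t)=(\Fj)'\big(\Fi(t)\big)/(\Ft)'\big(\Fi(t)\big)\to\rho\mathrm u_j$, so writing $\Fj(\Fi(t+s))-\Fj(\Fi(t))=\int_t^{t+s}(\Fj\circ\Fi)'(u)\,\du$ bounds $s^{-1}\big|\cdot-\rho\mathrm u_j s\big|$ by $\sup_{v\ge t}\big|(\Fj\circ\Fi)'(v)-\rho\mathrm u_j\big|$, which handles all $s\ge1$ at once and makes your case split $s\le t$ versus $s>t$ unnecessary. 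You instead exploit the exact algebraic cancellation $\mathrm w_j\pi_\rho=0$ to get $\Fj=\rho\mathrm u_j\Ft+S$ with $|S(x)|=O(x^{D}\gamma_1^{x})$, compose with $\Fi$ so that $\Ft\circ\Fi=\mathrm{id}$ kills the leading terms identically, and then control the increments of $S\circ\Fi$ using the Lipschitz bound on $\Fi$ ($\Fi'\lesssim\rho^{-\Fi}$) in the regime $1\le s\le t$ and the crude size of $S$ in the regime $s>t$. Both arguments rest on the same spectral input (positive regularity, $\pi_\rho=\mathrm u\mathrm v$, the Jordan bounds on $\pi^{(1)}-\pi_\rho$ and $\pi^{(2)}$); what your version buys is an explicit rate $O\big((\log t)^{D}t^{\beta-1}\big)$ with $\beta=\log_\rho\max(\gamma,\sqrt\rho)$, at the cost of more bookkeeping, while the paper's derivative-ratio argument is shorter and absorbs the uniformity in $s$ automatically. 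Your side remarks (that (GW3) implies (GW4), that $W>0$ on $\S$, and that $S\equiv0$ when $\gamma<\sqrt\rho$) are all fine.
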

\begin{proof}
For any $k\in\N_0,  x\in[0,1)$,  the equality $\E [\Phi_x^{\mathsf{t}}(k)]=(1-x)\E [\Phi_0^{\mathsf{t}}(k)]+x\E[\Phi_0^{\mathsf{t}}(k+1)]$
together with equation \eqref{eq:fluctuations-phi}, gives us
	$$F_{n}^{\Phi_{x}^{\mathsf t}}=(1-x)F_{n}^{\Phi_{0}^{\mathsf t}}+xF_{n+1}^{\Phi_{0}^{\mathsf t}},$$
	that is, for any $x\ge0$, $\Ft(x)$  is just a linear interpolation between  $\Ft(\floor x)$ and $\Ft(\floor x+1)$. 
	On the other hand, as for $n\in\N$ in view of \eqref{eq:vect-x2} and \eqref{eq:fluctuations-phi} we have
	$$\Ft(n)=\rho^n\mathrm{x}_1(\Phi_{0}^{\mathsf t}) A_1^n\pi_\rho W^{(1)}=\tfrac{\rho^{n}}{\rho-1}W+o(\rho^n),$$
	we conclude that the following holds:
	\begin{align*}
		&\Ft(x)=\tfrac{\rho^{x}}{\rho-1}l_\rho(x)W+o(\rho^x)\text{ and }\\
		&\big(\Ft\big)'(x)=\rho^{\floor x}W+o(\rho^x)\text{ for }x\notin\Z.
	\end{align*}
	By the same argument, for $j\in[J]$ we obtain
	\begin{align*}
		&\Fj(x)=\tfrac{\rho^{x}}{\rho-1}l_\rho(x)\rho\mathrm{u}_jW+o(\rho^x)\text{ and }\\
		&\big(\Fj\big)'(x)=\rho^{\floor x}\rho\mathrm{u}_jW+o(\rho^x)\text{ for }x\notin\Z.
	\end{align*}
In particular, $\Ft$ is eventually increasing $\ProbS$-almost surely and thus  the inverse function $(\Ft)^{-1}$ is well-defined for large arguments.
Furthermore, if $\Fi(t)\notin\N$ and $t$ is large enough then we have
$$\big(\Fj\circ \Fi\big)'(t)=(\Fj)'\big(\Fi(t)\big) \cdot (\Fi(t))'=\frac{(\Fj)'\big(\Fi(t)\big)}{(\Ft)'\big(\Fi(t)\big)}\to\rho\mathrm{u}_j,\quad \text{as }t\to\infty,$$
	since $\Fi(t)$ diverges to infinity.
Finally, as for large $t$ it holds
$$
\Fj\big(\Fi(t+s)\big)-\Fj\big(\Fi(t)\big)=\int_t^{t+s}\big(\Fj\circ \Fi\big)'(u)du
$$	
we obtain \eqref{eq:composition is asymptotic linear}.
\end{proof}

\subsection{Limit theorems for $\Xt$ and $\Xj$}

In order to prove weak convergence of the processes $\Big\{\frac{1}{n^{\ell+\frac12}\rho^{n/2}\sqrt W}\Xj(n+x); \ x\in \R\Big\}$ we follow the well-known technique: we first prove weak convergence
of the finite-dimensional distributions and then we prove tightness.
According to \cite[Theorem 3.5]{kolesko-sava-huss-char} 
the finite dimensional distributions of the aforementioned processes converge jointly; see also equation \eqref{eq:fin-dim} and the discussion after.

\begin{theorem}\label{thm:clt-general} Suppose that (GW1)-(GW3) hold and  $L$ satisfies $\E\left[\|L\|^{2+\delta}\right]<\infty$ for some $\delta\in (0,1)$. Then for every $j\in [J]$, the family of distributions 
$$\bigg\{\frac{1}{n^{\ell+\frac12}\rho^{n/2}}\Xj(n+x); \ x\in \R\bigg\}$$
with respect to $\Prob$ is tight in the Skorokhod space $\mathcal D(\R)$ endowed with the standard $\mathbf{J}_1$  topology.
\end{theorem}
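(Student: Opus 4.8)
The plan is to combine the joint convergence of finite-dimensional distributions, already available from \eqref{eq:fin-dim} and \cite[Theorem~3.5]{kolesko-sava-huss-char}, with a tightness argument on compact intervals. Since the topology on $\mathcal D(\R)$ is generated by the restriction maps to compacts, and since translating by an integer $\kappa$ only multiplies the rescaled process by the bounded and convergent factor $\rho^{\kappa/2}\big((n+\kappa)/n\big)^{\ell+\frac12}$, it suffices to prove tightness of $Y_n:=n^{-\ell-\frac12}\rho^{-n/2}\,\Xj(n+\,\cdot\,)$ in $\mathcal D([0,1])$.

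The heart of the matter is a moment estimate for the increments of $Y_n$. For $s\le t\le u$ in $[0,1]$ one has $\Xj(n+t)-\Xj(n+s)=Z_n^{\Psi_{s,t}}-F_n^{\Psi_{s,t}}$ (using $\mathcal Z^{\Phi}(n+\,\cdot\,)=Z_n^{\Phi_{\,\cdot\,}}$ and the analogous identity for $F_n^{\Phi_{\,\cdot\,}}$, together with linearity in the characteristic), where $\Psi_{s,t}:=\Phi^j_{t}-\Phi^j_{s}$ is a signed characteristic supported on at most two lattice points, whose mean and variance vectors are both of order $|t-s|$. I would split $Z_n^{\Psi_{s,t}}-F_n^{\Psi_{s,t}}=\big(\E[Z_n^{\Psi_{s,t}}\mid\A_n]-F_n^{\Psi_{s,t}}\big)+\big(Z_n^{\Psi_{s,t}}-\E[Z_n^{\Psi_{s,t}}\mid\A_n]\big)$, bounding the first term by the higher-moment estimates for $(Z_n^{\Phi})$ of Appendix~\ref{sec:apB} and the second --- the fluctuation of the fresh uniform marks $(U_u)_{|u|=n}$, which conditionally on $\A_n$ is a sum of independent centred variables with nonnegative weights $c_u:=\langle\mathrm e_j,L(u)\mathrm e_{\type(u)}\rangle$ --- by a Rosenthal-type inequality; the target is
\begin{align*}
\E\big[\,|Y_n(t)-Y_n(s)|^{2+\delta}\,\big]\ \le\ C\big((t-s)^{1+\delta/2}+\varepsilon_n\,(t-s)\big),\qquad \varepsilon_n\to 0,
\end{align*}
the leading term $(t-s)^{1+\delta/2}$ coming from the mark fluctuation (whose conditional variance is of order $(t-s)\sum_{|u|=n}c_u^2$, itself of order $(t-s)\rho^nW$) and the remainder $\varepsilon_n(t-s)$ from the heavy tail of $L$ (it carries an extra factor $\rho^{-n\delta/2}$, or $n^{-(2\ell+1)(1+\delta/2)}$ when $\ell>-\tfrac12$). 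By Hölder's inequality this bounds $\E\big[|Y_n(t)-Y_n(s)|^{1+\delta/2}\,|Y_n(u)-Y_n(t)|^{1+\delta/2}\big]$ by $C\big((u-s)^{1+\delta/4}+\varepsilon_n(u-s)\big)$; the genuine power $(u-s)^{1+\delta/4}$ is exactly what Billingsley's moment criterion for tightness in the Skorokhod space asks for, and the linear remainder $\varepsilon_n(u-s)$ is harmless since $\varepsilon_n\to0$ makes it disappear once one takes $\limsup_n$ in the oscillation bound $\lim_{\theta\downarrow0}\limsup_n\Prob\big(w'_{Y_n}(\theta)>\eta\big)=0$. Together with tightness of $Y_n(0)$, immediate from \eqref{eq:fin-dim}, this gives tightness of $\{Y_n\}$, hence of the original family in $\mathcal D(\R)$; combined with the finite-dimensional convergence it would also yield $Y_n\distto\sqrt W\,\Gj$.

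To see where the càdlàg structure lives, and why the limit is continuous, condition on $\A_n$ and use $\Zj(n+x)=\sum_{k=1}^nZ_k^j+\sum_{|u|=n}\ind{U_u\le x}c_u$ for $x\in[0,1)$ (as in the computation of $Z_n^{\Phi^j_x}$ in Section~\ref{sec:embed}): then $\E[\Zj(n+x)\mid\A_n]=\sum_{k=1}^nZ_k^j+xZ_{n+1}^j$ is affine in $x$, and since $\Fj(n+\,\cdot\,)$ is the linear interpolation between $\Fj(n)$ and $\Fj(n+1)$ (proof of Lemma~\ref{lem:difference of composition of F}), one obtains a decomposition $\Xj(n+x)=R_n(x)+M_n(x)$ with $R_n$ affine, $R_n(0)=\Xj(n)$, $R_n(1)=\Xj(n+1)$, and $M_n(x)=\sum_{|u|=n}(\ind{U_u\le x}-x)c_u$. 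The rescaled $R_n$ is affine with endpoint values converging in law, hence tight; $M_n$ is a weighted uniform empirical bridge with nonnegative weights for which $x\mapsto M_n(x)/(1-x)$ is a conditional martingale, so Doob's inequality on $[0,\tfrac12]$ and, by the symmetry $x\leftrightarrow 1-x$, on $[\tfrac12,1]$ gives $\E[\sup_xM_n(x)^2\mid\A_n]\le C\sum_{|u|=n}c_u^2$, whence $\sup_x|M_n(x)|=O_{\Prob}(\rho^{n/2})$; thus the jumps of $Y_n$, of size $c_u/(n^{\ell+\frac12}\rho^{n/2})$, vanish in the limit, and when $\ell\ge0$ the whole term $M_n$ is negligible after normalization so that only the affine part $R_n$ survives. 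I expect the increment estimate of the second step to be the main obstacle: under the sole assumption of finite $(2+\delta)$-th moments of $L$ the weights $c_u$ of the empirical bridge are heavy-tailed, a naive fourth-moment computation is unavailable, and the contribution of the large marks must be controlled either by a Lindeberg-type truncation (using $\max_{|u|=n}c_u^2=o\big(\sum_{|u|=n}c_u^2\big)$ a.s.\ on $\S$) or through the precise shape of the Appendix~\ref{sec:apB} estimates applied to the difference characteristics $\Phi^j_t-\Phi^j_s$.
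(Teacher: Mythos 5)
Your architecture mirrors the paper's in outline (reduce to a unit interval, write the increment as a weighted uniform bridge plus an $\A_n$-measurable affine term, control moments with Appendix \ref{sec:apB}, conclude via a Billingsley criterion), but the key estimate is organized differently, and that is exactly where a genuine gap appears. Your single-increment bound $\E\big[|Y_n(t)-Y_n(s)|^{2+\delta}\big]\le C\big((t-s)^{1+\delta/2}+\varepsilon_n(t-s)\big)$ is indeed the best one can hope for in the presence of jumps, but after Hölder the two-increment bound $C\big((u-s)^{1+\delta/4}+\varepsilon_n(u-s)\big)$ is \emph{not} of the form required by Billingsley's Theorem 13.5: that criterion needs a bound $(F(u)-F(s))^{2\alpha}$ with $2\alpha>1$ valid for \emph{all} triples $s\le t\le u$, and for $u-s\lesssim\varepsilon_n^{4/\delta}$ your linear remainder dominates the superlinear term. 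The claim that it ``disappears once one takes $\limsup_n$'' in the oscillation bound is precisely the unproven step; it would require a chaining/fluctuation inequality tolerant of an additive, vanishing, but only linear term (or a separate treatment of the small scales), which you do not supply. In addition, you yourself flag the heavy-tailed weights $c_u$ as ``the main obstacle'' and leave the Rosenthal/Lindeberg control of the bridge unresolved, so the central increment estimate is not actually established under the sole $(2+\delta)$-moment assumption.

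The paper closes both issues by never taking unconditional moments of a single increment. It conditions on $\A_n$ (so the weights and the affine part are frozen) and bounds the product of the \emph{squared} increments over the two disjoint intervals by the elementary computation of Lemma \ref{lem:auxilarity lemma}, which yields $C\big((y-x)(z-y)\|a\|^4+A^4+B^4\big)$ using only second and fourth conditional moments of the uniform marks; the key point is that the factor $(y-x)(z-y)\le(z-x)^2$ coming from the diagonal term over the two disjoint intervals already produces the exponent $2$ conditionally, with no remainder. Taking then the unconditional $p/2$-th moment with $p=(2+\delta)/2<2$ via conditional Hölder, the moment assumption on $L$ enters only through $\E\big[(Z^j_{n+1})^{p}\big]=O(\rho^{pn})$ (Theorem \ref{lem:moments growth} \textit{i)}) and $\E\big[(Z_n^{\Psi}-F_n^{\Psi})^{2p}\big]=O(n^{(2\ell+1)p}\rho^{pn})$ (Corollary \ref{cor:moment estimate}), giving the clean bound $C|z-x|^{p}$ with $p>1$, to which Theorem 13.5 applies directly --- no Rosenthal inequality, no truncation. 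Note that Lemma \ref{lem:auxilarity lemma} is stated for arbitrary $\ell^2$ weights, so your parent-indexed weights $c_u$ are covered just as well as the paper's unit weights on the generation-$(n+1)$ type-$j$ individuals; if you reorganize your argument around the conditional product of squared increments over disjoint intervals, your proposal essentially becomes the paper's proof, but as written the route through unconditional single-increment $(2+\delta)$-moments does not yield tightness.
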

\begin{proof}
First, let us observe that for any $k\in\Z,\ m\in\N$ the concatenation is a continuous mapping from $\mathcal{D}([k,k+1))\times\dots\times\mathcal D([k+m-1,k+m))$ to $\mathcal D([k,k+m))$. Consequently, it suffices to prove tightness in the space $\mathcal D([0,1))$. For $x\in[0,1)$, $j\in[J]$, and $n\in\N$ we set
$$\Yj(n+x)=\frac{1}{n^{\ell+\frac12}\rho^{n/2}}\Xj(n+x),$$
where $\ell$ may be $-\frac12$ in case {\it i)} of \cite[Theorem 3.5]{kolesko-sava-huss-char}, and the characteristic $\Phi^j=(\Phi^j_x)_{x\in[0,1)}$ is defined as in \eqref{eq:char-urn-type}.
In view of  \cite[Theorem 13.5]{billingsley_2nd_edition} it suffices to show that  for any $ 0\leq x\le y\le z<1$, $\lambda>0$ and $n$ large enough it holds
\begin{align*}
\Prob\Big(\big|\Yj(n+y)-\Yj(n+x)\big|\wedge\big|\Yj(n+z)-\Yj(n+y)\big|\ge \lambda\Big)\le C\lambda^{-2p}|z-x|^{p},
 \end{align*}
where $p\defeq(2+\delta)/2\in (1,3/2)$ and $C>0$ is some constant. A more restrictive condition is the following inequality
\begin{align*}
\E\Big[\left |\Yj(n+y)-\Yj(n+x)\right|^p\cdot\left |\Yj(n+z)-\Yj(n+y))\right|^p\Big]\le C|z-x|^{p}.
\end{align*}
By Hölder's inequality, we have
\begin{align*}
\E&\Big[|\Yj(n+y)-\Yj(n+x)|^{p}\cdot |\Yj(n+z)-\Yj(n+y)|^{p}\Big]\\
%	&\leq\E\Big[|\Yj(n+y)-\Yj(n+x)|^{2}\cdot  |\Yj(n+z)-\Yj(n+y)|^{2}\Big]^{p/2}\\
	&\le\E\left[\E\Big[|\Yj(n+y)-\Yj(n+x)|^{2}\cdot |\Yj(n+z)-\Yj(n+y)|^{2}\Big|\A_n\Big]^{p/2}\right]
	\end{align*}
so it remains to show the following estimate 
$$\E\Big[|\Yj(n+y)-\Yj(n+x)|^{2}\cdot |\Yj(n+z)-\Yj(n+y)|^{2}\Big|\A_n\Big]\leq H_n|z-x|^2,$$
for some random variables $H_n$ with bounded $p/2$ moment.
Recalling that for $0\leq x<1$, we have
\begin{align*}
\Zj(n+x)=\sum_{k=0}^{n-1}Z_k^j+\sum_{u\in\T;  |u|=n+1; \type(u)=j}\ind{U_u\leq x},
\end{align*}
yields for $0\leq x\leq y<1$  that it holds
\begin{align*}
\Zj(n+y)-\Zj(n+x)=\sum_{u\in \T;|u|=n+1,\type(u)=j}\ind{x< U_u\leq y}.
\end{align*}
We also have
\begin{align*}
\Fj(n+y)-\Fj(n+x)&=\left(\mathrm{x}_1(\Phi^j_y)-\mathrm{x}_1(\Phi^j_x)\right)A^n_1 W^{(1)}+\left(\mathrm{x}_2(\Phi^j_y)-\mathrm{x}_2(\Phi^j_x)\right)A^n_2Z_0\\
&=(y-x)A\mathrm{e}_j^\top\pi^{(1)}A^n_1 W^{(1)}+(y-x)A\mathrm{e}_j^\top\pi^{(2)}A^n_2Z_0\\
&=(y-x)A\mathrm{e}_j^\top\big(\pi^{(1)}A^n_1 W^{(1)}+\pi^{(2)}A^n_2Z_0\big)\\
&=(y-x)F_n^{\Psi}=(y-x)(Z^j_{n+1}-(Z_n^{\Psi}-F_n^{\Psi})),
\end{align*}
where $\Psi:\N_0\to\R^{1\times J}$ is the characteristic given by $\Psi(k)=\ind{k=0}\mathrm{e}_j^{\top}L=\ind{k=0}\langle\mathrm{e}_j,L\rangle$. Hence
$Z_n^{\Psi}$ counts the number of individuals of type $j$ in the $(n+1)$-th generation. %which we then  recover the %multitype Galton-Watson process $(Z_n)_{n\in\N}$, $Z_n^{\Psi}=|Z_n|$ and therefore $\E[Z_n^{\Psi}]=o(\rho^{-n})$. We also have
%$$
%\frac{X^\Psi_n}{n^{l+\frac12}\rho^{n/2}\sqrt{W}}=\frac{Z^\Psi_n-F^\Psi_n}{n^{l+\frac12}\rho^{n/2}\sqrt{W}}\stabto G^{\Psi}$$ 
%where $G^\Psi\defeq \sigma_l(\Psi)\mathcal{N}$.
We then obtain 
\begin{align*}
\Yj(n+y)-& \Yj(n+x)=\frac{1}{n^{\ell+\frac12}\rho^{n/2}}\left(\Xj(n+y)-\Xj(n+x)\right)\\
&=\frac{1}{n^{\ell+\frac12}\rho^{n/2}}\left[(\Zj(n+y)-\Zj(n+x))-(\Fj(n+y)-\Fj(n+x))\right]\\
&=\frac{1}{n^{\ell+\frac12}\rho^{n/2}}\Big[\sum_{u\in\T;  |u|=n+1; \type(u)=j}\ind{x< U_u\leq y}-(y-x)Z^j_{n+1}+(y-x)(Z_n^{\Psi}-F_n^{\Psi})\Big]\\
&=\frac{1}{n^{\ell+\frac12}\rho^{n/2}}\Big[\sum_{u\in\T;  |u|=n+1; \type(u)=j}\left(\ind{x< U_u\leq y}-(y-x)\right)\cdot 1+(y-x)(Z_n^{\Psi}-F_n^{\Psi})\Big].
\end{align*}
Since $Z_{n+1}^j$ and $Z_n^{\Psi}-F_n^{\Psi}$ are $\A_n$-measurable, by applying   Lemma \ref{lem:auxilarity lemma} to
the intervals $I=(x,y)$, $J=(y,z)$, and $a_i=1$, $A=(y-x)(Z_n^{\Psi}-F_n^{\Psi})$  and $B=(z-y)(Z_n^{\Psi}-F_n^{\Psi})$, we finally obtain
\begin{align*}
\E & \Big[|\Yj(n+y)-\Yj(n+x)|^{2}\cdot |\Yj(n+z)-\Yj(n+y)|^{2}|\A_n\Big]\\
&\le C \Big(\frac{1}{n^{2\ell+1}\rho^{n}}\Big)^2\big((y-x)(z-y)\big(Z_{n+1}^j\big)^2 +A^4+B^4\big)
\\
%&=C\Big(\frac{1}{n^{2\ell+1}\rho^{n}}\Big)^2(y-x)(z-y)Z_{n+1}^j\left[((y-x)^2+(z-y)^2)(Z_n^{\Psi}-F_n^{\Psi})^2+Z_{n+1}^j\right]\\
%&+\Big(\frac{1}{n^{2\ell+1}\rho^{n}}\Big)^2 (y-x)^2\cdot(z-y)^2(Z_n^{\Psi}-F_n^{\Psi})^4\\
& \leq C (z-x)^2 \Big(\frac{1}{n^{\ell+\frac12}\rho^{n/2}}\Big)^4\left[(Z_n^{\Psi}-F_n^{\Psi})^4+(Z_{n+1}^j)^2\right]=:C|z-x|^2H_n,
\end{align*}
for some absolute constant $C$.
We have $Z_{n+1}^j=Z_n^{\Psi}$, so Theorem \ref{lem:moments growth}{\it i)} implies that  $\E[(Z_{n+1}^j)^p]=O(\rho^{pn})$. On the other hand, from Corollary \ref{cor:moment estimate} we have $\E\big[(Z_n^{\Psi}-F_n^{\Psi})^{2p}\big] =O(n^{(2\ell+1)p}\rho^{pn})$.
%Raising everything to the power $p/2$ and  taking expectation, we obtain
%\begin{align*}
%\E&\Big[|\Yj(n+y)-\Yj(n+x)|^{p}\cdot |\Yj(n+z)-\Yj(n+y)|^{p}\Big]\\
%&\leq C \Big(\frac{1}{n^{2\ell+1}\rho^n }\Big)^{p}|z-x|^p\E\left[(Z_n^{\Psi}-F_n^{\Psi})^{2p}\right]\leq C |z-x|^p,
%\end{align*}
%where in the last inequality above we have used that $L$ has $2+\delta=2p$ finite moments, and we have applied Corollary \ref{cor:moment estimate} to the characteristic $\Psi$, which gives that $\E\left[(Z_n^{\Psi}-F_n^{\Psi})^{2p}\right]=O\left(n^{(2\ell+1)p}\rho^{np}\right)$.
% Therefore
%$$\E\Big[\left |\Yj(n+y)-\Yj(n+x)\right|^p\cdot\left |\Yj(n+z)-\Yj(n+y))\right|^p\Big]\le C|z-x|^{p}$$
As a consequence the random variables  $H_n$ have bounded $p/2$ moments and in turn the process $\Big\{\frac{1}{n^{\ell+\frac12}\rho^{n/2}}\Xj(n+x); \ x\in \R\Big\}$
is tight in  $\mathcal D(\R)$.
\end{proof}
%The proof above works if we replace  $\Psi$ with the constant characteristic $\ind{k=0}\mathrm{1}$, and in this case we obtain the tightness of the family of distributions $\bigg\{\frac{1}{n^{l+\frac12}\rho^{n/2}\sqrt W}\Xt(n+x); \ x\in [0,1)\bigg\}$. 
As a consequence of the previous result we obtain the following:
\begin{corollary}
Suppose that (GW1)-(GW3) hold and the matrix $L$ satisfies $\E\left[\|L\|^{2+\delta}\right]<\infty$ for some $\delta\in (0,1)$. Then the family of distributions
$$\Big\{\frac{1}{n^{\ell+\frac12}\rho^{n/2}}\Xt(n+x); \ x\in \R\Big\}$$
is tight in the Skorokhod space $\mathcal D(\R)$ endowed with the standard $\mathbf{J}_1$  topology.
\end{corollary}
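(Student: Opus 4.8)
The plan is to re-run the proof of Theorem~\ref{thm:clt-general} almost verbatim, replacing the type characteristic $\Phi^{j}$ by the total characteristic $\Phi^{\mathsf t}$ throughout. The natural reason this works is the following identity: summing $Z_{n+1}^{\Phi^{\mathsf t}_x}-1=\sum_{j\in[J]}Z_n^{\Phi^j_x}$ over generations and using the linearity of $\mathrm x_1,\mathrm x_2$ in the characteristic together with \eqref{eq:vect-x2} and \eqref{eq:fluctuations-phi}, one gets, for every $x\in[0,1)$ and $n\in\N$,
$$\Zt(n+1+x)=1+\sum_{j\in[J]}\Zj(n+x),\qquad \Ft(n+1+x)=\sum_{j\in[J]}\Fj(n+x),$$
so that $\Xt(n+1+x)=1+\sum_{j\in[J]}\Xj(n+x)$. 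It is worth stressing, however, that this identity does \emph{not} by itself reduce the corollary to Theorem~\ref{thm:clt-general}: addition is not continuous in the Skorokhod $\mathbf{J}_1$ topology (a sum of two step functions whose single jumps lie close together has a large $w''$-modulus), so tightness of the $\Xt$-family cannot simply be read off from tightness of the $\Xj$-families. The identity serves only as a consistency check and shows which exponent $\ell$ is the relevant one (the one attached to $\Phi^{\mathsf t}$).

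So I would copy the proof of Theorem~\ref{thm:clt-general}. By continuity of concatenation it suffices to prove tightness in $\mathcal D([0,1))$. Writing $\Yt(n+x)=\tfrac{1}{n^{\ell+\frac12}\rho^{n/2}}\Xt(n+x)$ and $p=(2+\delta)/2\in(1,3/2)$, Markov's inequality, Hölder's inequality and conditioning on $\A_n$ reduce the criterion of \cite[Theorem~13.5]{billingsley_2nd_edition} to exhibiting random variables $H_n$ with $\sup_n\E[H_n^{p/2}]<\infty$ such that
$$\E\Big[\big|\Yt(n+y)-\Yt(n+x)\big|^{2}\,\big|\Yt(n+z)-\Yt(n+y)\big|^{2}\,\Big|\,\A_n\Big]\le H_n\,|z-x|^{2},\qquad 0\le x\le y\le z<1.$$
The two structural inputs change as follows. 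First, $\Zt(n+y)-\Zt(n+x)=\sum_{u\in\T,\ |u|=n}\ind{x<U_u\le y}$, a sum over the individuals of generation $n$ (in place of the type-$j$ individuals of generation $n+1$). Second, by \eqref{eq:vect-x2} and \eqref{eq:fluctuations-phi}, $\Ft(n+y)-\Ft(n+x)=(y-x)F_n^{\widetilde{\Psi}}$ with $\widetilde{\Psi}(k)=\ind{k=0}\mathrm{1}$, so that $Z_n^{\widetilde{\Psi}}=|Z_n|$, the size of generation $n$, now plays the role of $Z_{n+1}^j=Z_n^{\Psi}$. Then Lemma~\ref{lem:auxilarity lemma}, applied to $I=(x,y)$, $J=(y,z)$, $a_i=1$, $A=(y-x)(Z_n^{\widetilde{\Psi}}-F_n^{\widetilde{\Psi}})$ and $B=(z-y)(Z_n^{\widetilde{\Psi}}-F_n^{\widetilde{\Psi}})$ (all $\A_n$-measurable, exactly as in the proof of Theorem~\ref{thm:clt-general}), yields
$$H_n=\Big(\tfrac{1}{n^{\ell+\frac12}\rho^{n/2}}\Big)^{4}\Big[\big(Z_n^{\widetilde{\Psi}}-F_n^{\widetilde{\Psi}}\big)^{4}+|Z_n|^{2}\Big].$$

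To finish, the moment estimates carry over unchanged: $\E[|Z_n|^{p}]=\E[(Z_n^{\widetilde{\Psi}})^{p}]=O(\rho^{pn})$ by Theorem~\ref{lem:moments growth}\,{\it i)}, and $\E\big[(Z_n^{\widetilde{\Psi}}-F_n^{\widetilde{\Psi}})^{2p}\big]=O(n^{(2\ell+1)p}\rho^{pn})$ by Corollary~\ref{cor:moment estimate}, since $\widetilde{\Psi}$ is a deterministic, finitely supported characteristic and hence falls within the scope of those estimates; consequently $\sup_n\E[H_n^{p/2}]<\infty$ (using $p/2<1$), which gives tightness. I do not expect a genuine obstacle. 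The only points that need a little care are the generation bookkeeping---for $\Phi^{\mathsf t}$ the increment and the interpolation term both sit at generation $n$, so the ``$+1$'' shift of the $\Phi^j$ computation disappears---and verifying that $\widetilde{\Psi}$ lies within the hypotheses of Theorem~\ref{lem:moments growth} and Corollary~\ref{cor:moment estimate}, which it does. The one real trap, as noted above, would be to try to shortcut the argument through the displayed linear identity and the tightness of the $\Xj$, which is invalid for the $\mathbf{J}_1$ topology.
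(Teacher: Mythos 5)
Your proof is correct, but it is not the route the paper takes: the paper disposes of this corollary in two lines, writing $\Yt(n+x)=\tfrac{1}{n^{\ell+\frac12}\rho^{n/2}}\Xt(n+x)=\sum_{j=1}^J\tfrac{1}{n^{\ell+\frac12}\rho^{n/2}}\Xj(n-1+x)$ via the identities $\Zt(n+x)-1=\sum_{j}\Zj(n-1+x)$ and $\Ft(n+x)=\sum_{j}\Fj(n-1+x)$, and concluding tightness of $\Yt$ from tightness of each summand supplied by Theorem~\ref{thm:clt-general} --- exactly the shortcut you declare off-limits. Your alternative, rerunning the proof of Theorem~\ref{thm:clt-general} with $\Phi^{\mathsf t}$ in place of $\Phi^j$, is sound and the bookkeeping is right: the increment is $\sum_{|u|=n}\ind{x<U_u\le y}$, the interpolation term is governed by $\widetilde{\Psi}(k)=\ind{k=0}\mathrm{1}$ with $Z_n^{\widetilde{\Psi}}=|Z_n|$, Lemma~\ref{lem:auxilarity lemma} applies with $a_i=1$, and $\widetilde{\Psi}$ (deterministic, supported at $k=0$) falls under Theorem~\ref{lem:moments growth}~\textit{i)} and Corollary~\ref{cor:moment estimate}; your argument inherits the same mild imprecisions as the paper's own proof (e.g.\ $F_n^{\widetilde{\Psi}}$ contains $W^{(1)}$ and is not $\A_n$-measurable, so one should really condition on $\sigma(L(u):u\in\U)$, with respect to which the generation-$n$ uniforms are still i.i.d., and the exponent $\ell$ for $\widetilde{\Psi}$ is implicitly matched to the one in the normalization, just as with $\Psi$ in the paper). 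Where I would push back is on calling the summation argument ``invalid'': you are right that $\mathbf{J}_1$-tightness is not preserved under sums in general, but here every summand $\tfrac{1}{n^{\ell+\frac12}\rho^{n/2}}\Xj(n-1+\cdot)$ has all its jumps of size exactly $n^{-\ell-\frac12}\rho^{-n/2}\to 0$ (the jumps come from $\Zj$ and are of unit size, while $\Fj$ is piecewise linear, hence continuous), so tightness upgrades to C-tightness, which \emph{is} preserved under finite sums since the ordinary modulus of continuity is subadditive; equivalently one may invoke the continuity of the limiting Gaussian processes as in Lemma~\ref{lem:properties of the gaussian process}. Thus the paper's route is legitimate modulo that one-line observation (which it leaves implicit) and buys brevity by recycling Theorem~\ref{thm:clt-general}; your route buys a self-contained verification of the Billingsley criterion that never needs C-tightness or continuity of limits, at the cost of repeating the whole moment computation.
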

\begin{proof}
In view of $\Zt(n+x)-1=\sum_{j=1}^J \Zj(n-1+x)$ and  $\Ft(n+x)=\sum_{j=1}^J \Fj(n-1+x)$, together with the equality
\begin{align*}
\Yt(n+x)=\frac{1}{n^{\ell+\frac12}\rho^{n/2}}\Xt(n+x)=\sum_{j=1}^J\frac{1}{n^{\ell+\frac12}\rho^{n/2}}\Xj(n-1+x),
\end{align*}
we see that $\Yt(n+x)$ can be written as a finite sum of tight processes, so it is tight as well.
\end{proof}

%Since we have the weak convergence
%of the finite-dimensional distributions and the tightness of the involved processes, this implies the weak convergence to Gaussian processes   $\Gphi(x)\defeq \rho^{\lfloor x\rfloor/2}G^{\Phi_{\{x\}}}$, where we once again remind that $G^{\Phi_{\{x\}}}=\sigma_l(\Phi_{\{x\}})\mathcal{N}(0,1)$ or $G^{\Phi_{\{x\}}}=\sigma(\Phi_{\{x\}})\mathcal{N}(0,1)$, where $\mathcal{N}(0,1)$ is a standard normal variable independent of $W$. Moreover, $\Gt$ (respectively  $\Gphi$) stays for $\Gphi$ if $\Phi=\Phi^{\mathsf{t}}$ (respectively $\Phi=\Phi^i$). 

The convergence of the finite dimensional distributions together with the tightness gives the weak convergence. 
\begin{proposition}\label{thm:conv-to-gauss}
Suppose that (GW1)-(GW3) hold and the matrix $L$ satisfies $\E\left[\|L\|^{2+\delta}\right]<\infty$ for some $\delta\in (0,1)$. Then  we have the following weak convergence of sequences of processes in  the Skorokhod space $\mathcal D(\R)$ endowed with the standard $\mathbf{J}_1$  topology: for every $j\in[J]$ it holds
$$\bigg\{\frac{1}{n^{\ell+\frac12}\rho^{n/2}\sqrt W}(\Xt(n+x),\Xj(n+x)); \ x\in \R\bigg\}\stabSto \{(\Gt(x),\Gj(x));\ x\in\R\}.$$
\end{proposition}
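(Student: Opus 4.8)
The plan is the classical one: deduce the functional convergence from convergence of the finite-dimensional distributions together with tightness. The finite-dimensional distributions are already available. Indeed, by \cite[Theorem~3.5]{kolesko-sava-huss-char} (via \eqref{eq:fin-dim}) and the Cram\'er--Wold device, for every $m\in\N$ and $x_1,\dots,x_m\in\R$ the vectors $\big(\tfrac{1}{n^{\ell+1/2}\rho^{n/2}\sqrt W}\big(\Xt(n+x_r),\Xj(n+x_r)\big)\big)_{r\le m}$ converge $\stabSto$, conditionally on $\S$, to $\big((\Gt(x_r),\Gj(x_r))\big)_{r\le m}$; note that the normalization by $\sqrt W$ is already built into \eqref{eq:fin-dim}, and $W>0$ holds $\ProbS$-almost surely by Kesten--Stigum, so the left-hand side is $\ProbS$-a.s.\ well defined. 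It then remains to establish tightness of the $\R^2$-valued rescaled process $\big(\tfrac{1}{n^{\ell+1/2}\rho^{n/2}}(\Xt(n+\,\cdot\,),\Xj(n+\,\cdot\,))\big)_n$ in $\mathcal D(\R)$, after which dividing by the a.s.-positive factor $\sqrt W$ preserves tightness and combining with the finite-dimensional convergence finishes the proof.

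For the tightness, Theorem~\ref{thm:clt-general} and the corollary following it give tightness in $\mathcal D(\R)$ of each \emph{coordinate} sequence (under $\Prob$, hence under $\ProbS$ since $\ProbS\le\Prob/\Prob(\S)$). Tightness of an $\R^2$-valued c\`adl\`ag process is not, however, implied by tightness of its two coordinates, so I would upgrade to $C$-tightness, a property that \emph{does} reduce to the coordinates because both the uniform norm and the continuous modulus of continuity on compacts are controlled coordinatewise. Since a tight sequence in $\mathcal D$ whose largest jump on each compact interval tends to $0$ in probability is $C$-tight, and since (as in the proof of Theorem~\ref{thm:clt-general}) it is enough to work on an interval $[k,k+1)$, it suffices to check there: $\Zt(n+\,\cdot\,)$ has unit jumps, which disappear after division by $n^{\ell+1/2}\rho^{n/2}$; $\Zj(n+\,\cdot\,)$ jumps by the offspring counts $L^{(\type(u),j)}(u)=\langle\mathrm e_j,L(u)\mathrm e_{\type(u)}\rangle$ with $u$ ranging over a generation of size $O(\rho^{n})$, so a union bound with the moment hypothesis $\E[(L^{(i,j)})^{2+\delta}]<\infty$ gives $\Prob\big(\max_u L^{(\type(u),j)}(u)>\varepsilon\rho^{n/2}\big)\le C\rho^{-n\delta/2}\to0$, i.e.\ the rescaled jumps vanish in probability; and $\Ft,\Fj$, being piecewise linear, contribute no jumps.

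Hence each rescaled coordinate, and therefore the pair, is $C$-tight; together with the $\stabSto$ convergence of the finite-dimensional distributions this yields the claimed convergence $\stabSto$ in $\mathcal D(\R)$ — in fact in $C(\R,\R^2)$ — to the jointly Gaussian process $\{(\Gt(x),\Gj(x));\,x\in\R\}$, which consequently admits a continuous modification; the passage from $[k,k+1)$ to all of $\R$ uses continuity of concatenation on the Skorokhod space exactly as in Theorem~\ref{thm:clt-general}. (An alternative to the $C$-tightness step is to repeat the conditional fourth-order increment estimate of Theorem~\ref{thm:clt-general} directly for the $\R^2$-valued increments and apply a multidimensional version of \cite[Theorem~13.5]{billingsley_2nd_edition}.)

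\textbf{Main obstacle.} The only genuinely new estimate, and the place where the standing moment hypothesis enters, is the jump bound for $\Zj$: unlike in bounded-replacement urn models a single birth adds $L^{(i,j)}$ particles of type $j$, so a priori the rescaled increments could carry non-negligible jumps, and it is exactly the $(2+\delta)$-th moment assumption that forces the largest offspring count in generation $n$ to be of smaller order than $\rho^{n/2}$, making the limit continuous and justifying the reduction to coordinates. A minor bookkeeping point is matching the $\Prob$-tightness of Theorem~\ref{thm:clt-general} with the $\S$-stable finite-dimensional convergence, resolved by conditioning on $\S$ and using $\S,W\in\A_\infty$.
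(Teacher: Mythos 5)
Your proposal is correct and follows the same architecture as the paper: the paper's own proof of this proposition is a one-liner, ``convergence of the finite-dimensional distributions together with the tightness gives the weak convergence'', resting on \eqref{eq:fin-dim} (with the Cram\'er--Wold device) for the finite-dimensional part and on Theorem \ref{thm:clt-general} and its corollary for tightness of each coordinate. What you add, and the paper glosses over, is the passage from coordinatewise tightness in $\mathcal D(\R)$ to tightness of the $\R^2$-valued process in the $\mathbf J_1$ topology; your observation that this does not hold in general but does hold via $C$-tightness (tightness plus asymptotically vanishing maximal jumps, the limit then being continuous, consistent with Lemma \ref{lem:properties of the gaussian process}) is a genuine and welcome strengthening of the written argument, and your handling of the $\sqrt W$ normalization and of $\Prob$ versus $\ProbS$ is fine. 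One small remark on the jump bound: in the representation actually used in the proof of Theorem \ref{thm:clt-general} (and in the embedding into the urn model), $\Zj(n+y)-\Zj(n+x)=\sum_{u\in\T,\,|u|=n+1,\,\type(u)=j}\ind{x<U_u\le y}$, so the jumps of $\Zj(n+\cdot)$ on $[k,k+1)$ have size $1$ almost surely and vanish after rescaling without any moment argument; your union bound with the $(2+\delta)$-moment hypothesis treats the more conservative reading of \eqref{eq:char-urn-type} in which a jump has size $L^{(\type(u),j)}(u)$, and it is valid there, so this discrepancy is harmless. In short: correct, same route, with an extra justification that the paper leaves implicit.
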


\subsection{Properties of the limiting processes $\Gt$ and $\Gj$}

Remark that for $\Phi_0\in\mathcal{C}$, we have
$$\Phi_0(1)=a \Phi_0^{\mathsf{t}}(1)+b\Phi_0^j(1)=a\mathrm{1}+b\mathrm{e}_j^\top L \quad \text{and} \quad \E[\Phi_0(1)]=a\mathrm{1}+b\mathrm{e}_j^\top A,$$
where $a,b\in \R$ and $j\in[J]$. On the other hand, taking $a=-\rho\mathrm{u}_j$
and $b=1$, we recover
$$\mathrm{w}_j=\mathrm{e}_j^{\top}A-\rho\mathrm{u}_j\mathrm{1}=\E[\Phi_0^j(1)-\rho\mathrm{u}_j\Phi_0^{\mathsf{t}}(1)]$$
as defined in \eqref{eq:w_j} whose $i$-th entry is given by 
$\mathrm{w}_{ji}=\E[L^{(i,j)}-\rho\mathrm{u}_j]=a_{ji}-\rho\mathrm{u}_j$.
\begin{proposition}\label{prop:pos-variances}
For any $\Phi\in\mathcal C$ and $j\in[J]$ assume that $\mathrm{w}_j\ne \mathrm{0}$ and that 
	\vspace{-0.25cm}
	\begin{align}
		\label{eq:condition for nondegenerate sigma}
		\sum_{\lambda\in\sigma_A} \sum_{\ell=0}^{J-1}\|\Var(\mathrm{w}_jN^\ell\pi_{\lambda}L)\|>0.
	\end{align}
\begin{enumerate}[i)]
\setlength\itemsep{0em}
\item If 
$\sum_{\lambda\in\sigma_A^{2}}\sum_{\ell=0}^{J-1}\|\Var(\mathrm{w}_jN^\ell\pi_{\lambda}L)\|>0$,
then for any $x\in[0,1)$ it holds
\begin{align}
\label{eq:definition of l}
\max\{0\leq \ell\leq J-1:\sigma_\ell^2(\Phi_x)>0\}=\max\Big\{\ell\ge 0:\sum_{\lambda\in\sigma^{2}_A}\|\Var(\mathrm{w}_jN^\ell\pi_{\lambda}L)\|>0\Big\}.
		\end{align}
		In particular, the largest $\ell$ such that $\sigma_\ell^2(\Phi_x)>0$ does not depend on $x$.
		\item Otherwise, for any $x\in[0,1)$ and $0\leq \ell\leq J-1$ it holds
		$$\sigma_\ell^2(\Phi_x)=0\text{ and }\sigma^2(\Phi_x)>0.$$ 
\end{enumerate}
\end{proposition}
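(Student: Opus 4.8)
We take $\Phi=\Phi^{j}-\rho\mathrm{u}_{j}\Phi^{\mathsf{t}}\in\mathcal{C}$, the characteristic of the Corollary, for which \eqref{eq:char-urn-all}--\eqref{eq:char-urn-type} give $\E[\Phi_{x}(k)]=\bigl(\ind{k\ge1}+x\,\ind{k=0}\bigr)\mathrm{w}_{j}$ (any other member of $\mathcal{C}$ is handled identically with $\mathrm{w}_{j}$ replaced by its first-generation mean vector). By \eqref{eq:vect-x2} this yields $\mathrm{x}_{2}(\Phi_{x})=\mathrm{w}_{j}\bigl(x\pi^{(2)}+(A-I)^{-1}\pi^{(2)}\bigr)$ for $x\in[0,1)$. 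The elementary fact used throughout: for a fixed (complex) row vector $v$, $\Var[vL]\mathrm{u}=\sum_{i\in[J]}\mathrm{u}_{i}\Var(vL^{(i)})\ge0$, and this vanishes iff every $vL^{(i)}$ is a.s.\ constant, i.e.\ iff $\|\Var(vL)\|=0$. In particular $\sigma^{2}_{\ell}(\Phi_{x})=0$ iff $\Var\bigl[\mathrm{x}_{2}(\Phi_{x})\pi_{\lambda}(A-\lambda I)^{\ell}L\bigr]=0$ for every $\lambda\in\sigma^{2}_{A}$, and $\sigma^{2}(\Phi_{x})=0$ iff $\Var\bigl[\Phi_{x}(k)+\Psi^{\Phi_{x}}(k)\bigr]=0$ for every $k\in\Z$.

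\textbf{Part i).} On the range of $\pi_{\lambda}$ we have $A-\lambda I=N_{\lambda}$ and $(A-I)^{-1}\pi_{\lambda}=\sum_{m=0}^{d_{\lambda}}(-1)^{m}(\lambda-1)^{-m-1}N_{\lambda}^{m}$; since $N_{\lambda}^{m}(A-\lambda I)^{\ell}=N_{\lambda}^{m+\ell}=N^{m+\ell}\pi_{\lambda}$ we obtain
\[
\mathrm{x}_{2}(\Phi_{x})\,\pi_{\lambda}\,(A-\lambda I)^{\ell}=\sum_{m\ge0}\beta_{m}(\lambda,x)\,\mathrm{w}_{j}N^{\ell+m}\pi_{\lambda},\qquad \beta_{0}(\lambda,x)=x+\tfrac{1}{\lambda-1}.
\]
Thus $\mathrm{x}_{2}(\Phi_{x})\pi_{\lambda}(A-\lambda I)^{\ell}L$ is a triangular linear combination of the $\mathrm{w}_{j}N^{\ell'}\pi_{\lambda}L$, $\ell'\ge\ell$, with leading coefficient $\beta_{0}(\lambda,x)$. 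Write $L_{\ast}$ for the right-hand side of \eqref{eq:definition of l} and $m_{\lambda}:=\max\{\ell':\|\Var(\mathrm{w}_{j}N^{\ell'}\pi_{\lambda}L)\|>0\}$, so $L_{\ast}=\max_{\lambda\in\sigma^{2}_{A}}m_{\lambda}$. If $\ell>L_{\ast}$, every term occurring is a.s.\ constant, hence $\sigma^{2}_{\ell}(\Phi_{x})=0$; if $\ell=L_{\ast}$, only the lowest-order term of each $\lambda$ with $m_{\lambda}=L_{\ast}$ survives the variance, giving
\[
\sigma^{2}_{L_{\ast}}(\Phi_{x})=\frac{\rho^{-L_{\ast}}}{(2L_{\ast}+1)(L_{\ast}!)^{2}}\sum_{\lambda\in\sigma^{2}_{A}:\,m_{\lambda}=L_{\ast}}\bigl|\beta_{0}(\lambda,x)\bigr|^{2}\,\Var[\mathrm{w}_{j}N^{L_{\ast}}\pi_{\lambda}L]\,\mathrm{u},
\]
each summand being $|\beta_{0}(\lambda,x)|^{2}$ times a strictly positive constant; so this is positive as soon as $\beta_{0}(\lambda,x)\ne0$ for one maximizing $\lambda$. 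Now $\beta_{0}(\lambda,x)=0$ forces $\lambda=1-1/x\in\R$, and since $\sigma^{2}_{A}$ and $\lambda\mapsto m_{\lambda}$ are conjugation-invariant, a non-real maximizer has a distinct maximizing conjugate with $\beta_{0}\ne0$, while for $\lambda=\sqrt{\rho}$ one has $\beta_{0}>0$ (as $\sqrt{\rho}>1$). This yields \eqref{eq:definition of l}, in particular the $x$-independence of the maximal $\ell$.

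\textbf{Part ii).} Here $\Var(\mathrm{w}_{j}N^{\ell}\pi_{\lambda}L)=0$ for all $\lambda\in\sigma^{2}_{A}$, $\ell\ge0$, so the triangular expansion above shows each $\mathrm{x}_{2}(\Phi_{x})\pi_{\lambda}(A-\lambda I)^{\ell}L$ is a.s.\ constant, i.e.\ $\sigma^{2}_{\ell}(\Phi_{x})=0$ for every $\ell$; it remains to prove $\sigma^{2}(\Phi_{x})>0$. Substituting $\E[\Phi_{x}(m)]=\bigl(\ind{m\ge1}+x\,\ind{m=0}\bigr)\mathrm{w}_{j}$ into \eqref{eq:definition of sigma}, the centered version of $\Phi_{x}(k)+\Psi^{\Phi_{x}}(k)$ equals $(\mathrm{e}_{j}^{\top}+\mathrm{w}_{j}M_{k,x})(L-A)$ for explicit matrices $M_{k,x}$ assembled from the $A^{\ell}\mathsf{P}(k,\ell)$. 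Assume $\sigma^{2}(\Phi_{x})=0$; then $(\mathrm{e}_{j}^{\top}+\mathrm{w}_{j}M_{k,x})(L-A)=0$ a.s.\ for every $k\in\Z$. From the form of $\mathsf{P}(k,\ell)$, the consecutive differences satisfy $M_{k-1,x}-M_{k,x}=A^{k-2}\pi^{(1)}(I+x(A-I))$ for $k\le0$ and $M_{k+1,x}-M_{k,x}=A^{k-1}\pi^{(3)}(I+x(A-I))$ for $k\ge1$; differencing cancels the $\mathrm{e}_{j}^{\top}$-term, and sweeping the free power of $A$ over the ranges of $\pi^{(1)}$ and $\pi^{(3)}$ produces $\mathrm{w}_{j}N^{\ell}\pi_{\lambda}\bigl(I+x(A-I)\bigr)(L-A)=0$ a.s.\ for all $\lambda\in\sigma^{1}_{A}\cup\sigma^{3}_{A}$ and $\ell\ge0$. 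Using $N^{\ell}\pi_{\lambda}(A-I)=(\lambda-1)N^{\ell}\pi_{\lambda}+N^{\ell+1}\pi_{\lambda}$ and a downward induction on $\ell$ from $d_{\lambda}$ (leading coefficient $1+x(\lambda-1)$), this peels off to $\|\Var(\mathrm{w}_{j}N^{\ell}\pi_{\lambda}L)\|=0$ for all $\lambda\ne\rho$ in $\sigma^{1}_{A}\cup\sigma^{3}_{A}$ and all $\ell$; the $\rho$-summand vanishes anyway since $\mathrm{w}_{j}\pi_{\rho}=(\mathrm{w}_{j}\mathrm{u})\mathrm{v}=0$, because $\mathrm{w}_{j}\mathrm{u}=\mathrm{e}_{j}^{\top}A\mathrm{u}-\rho\mathrm{u}_{j}(\mathrm{1}\mathrm{u})=\rho\mathrm{u}_{j}-\rho\mathrm{u}_{j}=0$. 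Combined with the Part~ii) hypothesis on $\sigma^{2}_{A}$, this contradicts \eqref{eq:condition for nondegenerate sigma}; hence $\sigma^{2}(\Phi_{x})>0$.

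\textbf{Main obstacle.} Most of the work is spectral bookkeeping — expanding $(A-I)^{-1}\pi_{\lambda}$, $M_{k,x}$ and $\mathsf{P}(k,\ell)$ and tracking which $\mathrm{w}_{j}N^{\ell}\pi_{\lambda}$-components survive each variance or telescoping step — which is routine given the decomposition $A=D+N$ and the non-negativity reduction. The genuinely delicate point is the non-vanishing of the triangular leading coefficients $\beta_{0}(\lambda,x)=x+(\lambda-1)^{-1}$ and $1+x(\lambda-1)$: conjugation symmetry together with $\sqrt{\rho}-1>0$ settles every eigenvalue except a real $\lambda<0$, which resonates at the single threshold $x=1/(1-\lambda)\in[0,1)$; there one must either pass to the next power $\mathrm{w}_{j}N^{\ell+1}\pi_{\lambda}$ (which then carries the variance) or note that only an isolated $x$ is affected. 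I expect this resonance to be the one step requiring real care.
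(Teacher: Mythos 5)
Your Part i) is essentially the paper's own argument: compute $\mathrm{x}_2(\Phi_x)$, expand $(A_2-I)^{-1}\pi_\lambda$ in powers of $N_\lambda$, note that by maximality only the leading term $\beta_0(\lambda,x)\,\mathrm{w}_jN_\lambda^{L_*}\pi_\lambda(L-A)$ contributes to the variance, and conclude positivity provided $\beta_0(\lambda,x)=x+(\lambda-1)^{-1}\neq0$ for some maximizing $\lambda\in\sigma_A^2$. The paper closes this last step by asserting that the relevant $\lambda$ is not real; your conjugation argument covers non-real maximizers and $\lambda=\sqrt\rho$, but — as you yourself flag — not a real negative maximizer $\lambda=-\sqrt\rho$ at the single threshold $x=1/(1+\sqrt\rho)$. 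Neither of your two proposed repairs works: ``pass to the next power'' is impossible, since by maximality of $m_\lambda$ the vector $\mathrm{w}_jN^{L_*+1}\pi_\lambda(L-A)$ is a.s.\ constant and carries no variance, and ``only an isolated $x$ is affected'' does not prove \eqref{eq:definition of l}, which is claimed for every $x\in[0,1)$. So Part i) has a genuine hole exactly at the step you predicted; to match the paper you must either argue that a maximizing $\lambda$ with $x+(\lambda-1)^{-1}\neq0$ can always be chosen, or treat the case $-\sqrt\rho\in\sigma_A^2$ separately.

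Part ii) deviates from the paper, and this is where the more serious problems lie. Your identification of the centered version of $\Phi_x(k)+\Psi^{\Phi_x}(k)$ as $(\mathrm{e}_j^{\top}+\mathrm{w}_jM_{k,x})(L-A)$ is false for $k\le0$: for $k<0$ there is no $\mathrm{e}_j^{\top}$-term at all, and for $k=0$, $x\in(0,1)$, the term $\ind{U\le x}(\mathrm{e}_j^{\top}L-\rho\mathrm{u}_j\mathrm{1})$ carries the extra Bernoulli randomness of $U$, which is not of the form (row vector)$\cdot(L-A)$. This is not cosmetic: that extra randomness is precisely what the paper exploits, by conditioning on $L$ in the $k=0$ summand of \eqref{eq:definition of sigma}, to get $\sigma^2(\Phi_x)\ge x(1-x)\sum_i\mathrm{w}_{ji}^2\mathrm{u}_i>0$ immediately for all $x\in(0,1)$, so that the spectral peeling is only needed at $x=0$, where the leading coefficients reduce to $(\lambda-1)^{-1}$ and $1$ and never vanish. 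By running your telescoping uniformly in $x$ you reintroduce the same resonance as in Part i): the induction coefficient $1+x(\lambda-1)$ vanishes at $x=1/(1-\lambda)\in(0,1)$ whenever $\lambda\in\sigma_A^1\cup\sigma_A^3$ is real and negative, and you again leave that case open. Also, the ``sweeping'' step — passing from $\mathrm{w}_jA^m\pi^{(1)}(\cdot)=0$ (resp.\ $\pi^{(3)}$) for all $m$ in a half-line to $\mathrm{w}_jN^\ell\pi_\lambda(\cdot)=0$ for each individual $\lambda$ — is not mere bookkeeping but the linear-independence statement of Lemma~\ref{lem:linear independence}, which the paper proves separately (your difference-first variant is actually cleaner than the paper's geometric-series resummation, but it still requires that lemma). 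The repair is the paper's split: dispose of $x\in(0,1)$ via the $U$-randomness at $k=0$, run the (then resonance-free) peeling only at $x=0$, and use $\mathrm{w}_j\pi_\rho=0$ (which you verify correctly) to handle the Perron component.
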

\begin{proof}
	\textit{i)} For any $x\in[0,1)$, the vector $\mathrm{x}_2(\Phi_x)$ is given by
	\begin{align*}
		\mathrm{x}_2(\Phi_x)=\sum_{k=0}^{\infty}\E[\Phi_x(k)]\pi^{(2)}A_2^{-k}=x\mathrm{w}_j\pi^{(2)}+\mathrm{w}_j\pi^{(2)}\sum_{k=1}^{\infty}A_2^{-k}.
	\end{align*}
	If $k$ is  at least the right hand side of equation \eqref{eq:definition of l} then for any $\lambda\in\sigma^{2}_A$ we have, almost surely $\mathrm{w}_jN_\lambda^k(L-A)= 0$. Since $A_2$ is invertible, we have 
	$$\sum_{j\geq 1}A_2^{-j}=A_2^{-1}\sum_{j\geq 0}A_2^{-j}=\sum_{j\geq 0}A_2^{-j}-I,$$
	so from the last two matrix equations we get $\sum_{j\geq 0}A_2^{-j}=A_2\sum_{j\geq 0}A_2^{-j}-A_2$, which in turn implies
	$(A_2-I)\sum_{j\geq 0}A_2^{-j}=A_2$. Multiplying this equation with $(A_2-I)^{-1}$ from the right and with $A_2^{-1}$ from the left, we obtain $\sum_{j\geq 1}A_2^{-j}=(A_2-I)^{-1}$ so $(A_2-I)\sum_{j\geq 1}A_2^{-j}\pi_{\lambda}=\pi_{\lambda}$, and finally it follows that $\sum_{j\geq 1}A_2^{-j}\pi_{\lambda}=((\lambda-1)I+N_{\lambda})^{-1}$.	
In view of the definition \eqref{eq:sigma_l} of  $\sigma_\ell(\Phi)$, it is enough to understand $\mathrm{x} _2(\Phi) \pi_\lambda (A-\lambda I)^{\ell}(L-A)$.	
It holds
	\begin{align*}
		\mathrm{x} _2(\Phi_x) \pi_\lambda (A-\lambda I)^{k}(L-A)&=\Big(x\mathrm{w}_j\pi_\lambda+\mathrm{w}_j\sum_{j=1}^{\infty}A_2^{-j}\pi_\lambda\Big) (A-\lambda I)^{k}\pi_\lambda(L-A)\\
		&=\big(x\mathrm{w}_j\pi_\lambda+\mathrm{w}_j((\lambda-1)I +N_\lambda)^{-1}\big) N_\lambda^{k}(L-A)\\
		&=\bigg(x\mathrm{w}_j\pi_\lambda+\mathrm{w}_j\sum_{j=0}^{d_\lambda-1}{-1\choose j}(\lambda-1)^{-1-j}N_\lambda^j\bigg) N_\lambda^{k}\pi_\lambda(L-A)= 0,
	\end{align*}
	almost surely, and hence $\sigma_\ell^2(\Phi_x)=0$. On the other hand, if 
	$0\leq \ell\leq J-1$ is the  maximal number such that $\Prob(\mathrm{w}_jN_\lambda^\ell(L-A)\neq 0)>0$ for some $\lambda\in\sigma^{2}_A$, then for such $\ell$ and $\lambda$, similar calculations give
	\begin{align*}
		\mathrm{x} _2(\Phi_x) \pi_\lambda (A-\lambda I)^{\ell}(L-A)
		&=\bigg(x\mathrm{w}_j\pi_\lambda+\mathrm{w}_j\sum_{j=0}^{d_\lambda-1}{-1\choose j}(\lambda-1)^{-1-j}N_\lambda^j\bigg) N_\lambda^{\ell}\pi_\lambda(L-A)\\
		&=\mathrm{w}_j\big(x+(\lambda-1)^{-1}\big) N_\lambda^{\ell}\pi_\lambda(L-A)\neq 0\\
	\end{align*}
	with positive probability, since $x+(\lambda-1)^{-1}\neq0$ because $\lambda\notin\R$. This implies that $\sigma_\ell^2(\Phi_x)>0$, and this proves \textit{i)}.
	
	\textit{ii)} Assume that $\mathrm{w}_jN^\ell_{\lambda}(L-A)=0$ almost surely for any $\lambda\in\sigma_A^{2}$ and any $\ell\ge 0$. 
Let $\mathrm{t}_j=\Phi_{0}(1)=a\mathrm{1}+b \mathrm{e}_j^\top L\in \R^{1\times J}$ be the random row vector whose expectation is $\E[\mathsf{t}_j]=\mathrm{w}_j\neq \mathrm{0}$ by assumption, and whose $i$-th entry is denoted by $\mathrm{t}_{ji}$.  Note that for $x\in(0,1)$, in view of \eqref{eq:definition of sigma}, we have
	\begin{align*}
		\sigma^2(\Phi_x)&=\sum_{k=0}^{\infty}\rho^{-k}\Var\left[\Phi_x(k)+\Psi^{\Phi_x}(k)\right]\mathrm{u}
		\ge\Var\left[\Phi_x(0)+\Psi^{\Phi_x}(0)\right]\mathrm{u}\\
		&\ge \E\big[\Var\big[\mathrm{t}_j\ind{U\le x}+\Psi^{\Phi_x}(0)\big|L\big]\mathrm{u}\big]+\Var\big[\E\big[\mathrm{t}_j\ind{U\le x}+\Psi^{\Phi_x}(0)\big|L\big]\mathrm{u}\big]\\
		&\ge \E\big[\Var\big[\mathrm{t}_j\ind{U\le x}\big|L\big]\mathrm{u}\big]=x(1-x)\E\Big[\sum_{i=1}^J \mathrm{t}^2_{ji} \mathrm u_i\Big]
		\ge x(1-x)\sum_{i=1}^J \mathrm{w}_{ji}^2 \mathrm u_i>0,
	\end{align*}
	since by assumption $\mathrm{w}_j\neq0$. %Above we have used the fact that $\E[\Phi^T_x(0)\mathrm{e}_i]=\E[\ind{U\leq x}]=x$ and $\Var[\Phi^T_x(0)\mathrm{e}_i]=x-x^2$. 
	Now we prove that $\sigma^2(\Phi_0)$ does not vanish.
	We have $\Phi_0(k)=\mathrm{w}_j\cdot  \ind{k\geq 1}$, so
	$\Phi_0:\N_0\mapsto \R^{1\times J}$ is completely deterministic. Assume that  $\sigma^2(\Phi_0)=0$.  Then for any $k\in \N_0$ it holds $\Var\left[\Psi^{\Phi_0}(k)\right]\mathrm{u}=0$ which in turn, as $\mathrm{u}_j>0$ for $j\in[J]$, implies that for any $k\in \N_0$ and $j\in[J]$ we have $\Var\left[\Psi^{\Phi_0}(k)\mathrm{e}_j\right]=0$. 
	The latter is equivalent  to 
	\begin{align}
		\label{eq:sigma is 0}
		\sum_{\ell\in \Z}\ind{k-\ell-1\ge 1}\mathrm{w}_jA^\ell\mathsf{P}(k,\ell)(L-A)\mathrm{e}_j=0\quad\text{almost surely}.
	\end{align}
	We set
	$A_\lambda = \pi_\lambda A+\lambda(I-\pi_\lambda)$, 
	and observe that for any  $n\in\Z$ and $m\in \N$ we have
	\begin{align*}
		A_{\lambda}^n\pi_{\lambda}=(\lambda I+N_\lambda)^n\pi_{\lambda}=\lambda^{n}\sum_{i=0}^{d_{\lambda}}\lambda^{-i}{n\choose i}N_{\lambda}^i\pi_{\lambda},\end{align*}
		and \begin{align*}
		\sum_{0\le \ell\le m}A_{1}^\ell\pi_{1}&=\sum_{0\le \ell\le m}(I+N_1)^ \ell\pi_{1}=\sum_{0\le \ell\le m}\sum_{i=0}^\ell{\ell\choose i}N_1^i\pi_{1}
		=\sum_{i=0}^{ d_1}N_1^i\sum_{l=i}^{m}{l\choose i}\pi_{1},
	\end{align*}
	where in the second equation we have used $\lambda=1$ (if $\lambda$ would be in the spectrum of $A$).
	Suppose now that for some vector $\mathrm{z}\in\R^J$  we have
	\begin{align*}
		\mathrm{w}_jN^i_\lambda\pi_\lambda \mathrm{z}=0\quad\text{for any  }\lambda\in\sigma^{2}_A \text{ and }\ i=0,\dots,d_\lambda-1,
	\end{align*}
	and for any $k\in\N_0$ it holds
	\begin{align}
		\label{eq:sigma is 0 with z}
		\sum_{\ell\in \Z}\ind{k-\ell-1\ge 1}\mathrm{w}_jA^\ell\mathsf{P}(k,\ell)\mathrm{z}=0.
	\end{align}
	Note that the left hand side of the previous equation can be rewritten as 
	\begin{align*}
		&\sum_{\ell\in \Z}\ind{k-\ell-1\ge 1}\mathrm{w}_jA^\ell\mathsf{P}(k,\ell)\mathrm{z}=
		\sum_{\lambda\in\sigma_A}\sum_{\ell\in \Z}\ind{k-\ell-1\ge 1}\mathrm{w}_jA_{\lambda}^\ell\pi_{\lambda}\mathsf{P}(k,\ell)\mathrm{z}\\
		&=\sum_{\lambda\in\sigma_A^{1}\cup\sigma_A^{3}}\sum_{\ell\in \Z}\ind{k-\ell-1\ge 1}\mathrm{w}_jA_{\lambda}^\ell\pi_{\lambda}\mathsf{P}(k,\ell)\mathrm{z}\\
		&=-\sum_{\lambda\in\sigma^{1}_A}\sum_{\ell\in \Z}\ind{k-\ell-1\ge 1}\mathrm{w}_jA_{\lambda}^\ell\pi_{\lambda}\ind{\ell<0}\mathrm{z}
		+\sum_{\lambda\in\sigma^{3}_A}\sum_{\ell\in \Z}\ind{k-\ell-1\ge 1}\mathrm{w}_jA_{\lambda}^\ell\pi_{\lambda}\ind{\ell\ge 0}\mathrm{z}.
	\end{align*}
	Setting  $-n=k-2\ge -2$, we get
\begin{align*}
		0&=\sum_{\lambda\in\sigma^{1}_A}\sum_{\ell\le -n}\mathrm{w}_jA_{\lambda}^\ell\ind{\ell<0}\pi_{\lambda}\mathrm{z}
		=\sum_{\lambda\in\sigma^{1}_A}\mathrm{w}_j(I-A_{\lambda}^{-1})^{-1}A_{\lambda}^{-n}\pi_{\lambda}\mathrm{z}\\
		&=\sum_{\lambda\in\sigma^{1}_A}\mathrm{w}_j(I-A_{\lambda}^{-1})^{-1}(\lambda\pi_{\lambda}+N_\lambda)^{-n}\pi_{\lambda}\mathrm{z}
		=\sum_{\lambda\in\sigma^{1}_A}\lambda^{-n}\sum_{i=0}^{d_{\lambda}}{-n\choose i}\lambda^{-i}\Big(\mathrm{w}_j(I-A_{\lambda}^{-1})^{-1}N_{\lambda}^i\pi_{\lambda}\mathrm{z}\Big),
\end{align*}
	which in view of Lemma~\ref{lem:linear independence}, implies that 
	\begin{align}
		\label{eq:sigma=0-1}
		\mathrm{w}_j(I-A_{\lambda}^{-1})^{-1}N^i_\lambda\pi_\lambda \mathrm{z}=0&\quad\text{for }\lambda\in\sigma_A^{1} \text{ and }\ i<d_\lambda.
	\end{align}
	Now we can use the decomposition
	$$(I-A_{\lambda}^{-1})^{-1}\pi_\lambda=\sum_{i=0}^{d_\lambda-1}c_iN^i_\lambda\pi_\lambda,$$
	for some $c_0,\dots,c_{d_\lambda-1}$ and since $(I-A_{\lambda}^{-1})^{-1}$ is invertible, we conclude that $(I-A_{\lambda}^{-1})^{-1}\pi_\lambda$ is not nilpotent and hence $c_0\neq0$. Taking now $i=d_{\lambda-1}$ in \eqref{eq:sigma=0-1}, we infer that $c_0\mathrm{w}_jN_\lambda^{d_{\lambda}-1}\mathrm{z}=0$, that is, we have $\mathrm{w}_jN_{\lambda}^{d_{\lambda}-1}\mathrm{z}=0$. Recursively, we see that  for $i=d_{\lambda}-1,d_{\lambda}-2,\dots,0$ equation \eqref{eq:sigma=0-1} implies
$\mathrm{w}_jN_\lambda^i\mathrm{z}=0$, for any $\lambda\in\sigma_A^{1}$. 
	Taking this into account and setting $n=k-2>0 $ condition \eqref{eq:sigma is 0 with z} gives
	\begin{align*}
		0&=\sum_{\lambda\in\sigma^3_A}\sum_{0\le \ell\le n} \mathrm{w}_jA_{\lambda}^\ell\pi_{\lambda}\mathrm{z}
		=\sum_{\lambda\in\sigma^3_A\setminus\{1\}}\sum_{0\le \ell\le n} \mathrm{w}_jA_{\lambda}^\ell\pi_{\lambda}\mathrm{z}
		+\ind{1\in\sigma^{3}_A}\sum_{0\le \ell\le n} \mathrm{w}_jA_{1}^\ell\pi_{1}\mathrm{z}\\
		&=\sum_{\lambda\in\sigma^3_A\setminus\{1\}} \mathrm{w}_j(A_{\lambda}-I)^{-1}(A_{\lambda}^{n+1}-I)\pi_{\lambda}\mathrm{z}
		+\ind{1\in\sigma^{3}_A}\sum_{0\le \ell\le n} \mathrm{w}_j(I+N_1)^\ell\pi_{1}\mathrm{z}\\
		&=\sum_{\lambda\in\sigma^3_A\setminus\{1\}}\sum_{i=0}^{d_\lambda-1}\mathrm{w}_j(A_{\lambda}-I)^{-1}N^i_\lambda\pi_\lambda \mathrm{z}\lambda^np_{\lambda,i}(n)+\ind{1\in\sigma^{3}_A} \sum_{i=0}^{d_1-1}\mathrm{w}_jN^i_1\pi_1 \mathrm{z} p_{1,i+1}(n)+c,
	\end{align*}
where $p_{\gamma,i}$ is some polynomial of degree $i$ and $c$ does not depend on $n$.
Lemma~\ref{lem:linear independence} implies that
	\begin{align*}
		\mathrm{w}_j(A_{\lambda}-I)^{-1}N^i_\lambda\pi_\lambda \mathrm{z}=0\quad\text{for }\lambda\in\sigma_A^{3}\setminus\{1\} \text{ and }i<d_\lambda,
		\end{align*}
and also 
\begin{align*}
		\mathrm{w}_jN^i_1\pi_1 \mathrm{z}=0\quad\text{if  }1\in\sigma_A^{3} \text{ and } i<d_1.
	\end{align*}
	The same argument as before gives that $\mathrm{w}_jN_\lambda^i\mathrm{z}=0,$ for any $\lambda\in\sigma_A^{3}$ and $i<d_\lambda$. 
	Suppose now that $\sigma^2(\Phi_0)=0$ and also $\sigma_\ell^2(\Phi_0)=0$ for all $0\leq \ell\leq J$. Then by setting $\mathrm{z}=(L-A)\mathrm e_j$ with $j\in [J]$ we conclude that
	for any $\lambda\in \sigma_A$ and $\ell\ge 0$ it holds  
	\begin{align*}
		\mathrm{w}_jN^\ell_\lambda\pi_\lambda L=\mathrm{w}_jN^\ell_\lambda\pi_\lambda A\quad \text{almost surely},
	\end{align*}
thus contradicting the assumption.
\end{proof}

\subsubsection*{Continuity of the limit processes $\Gt$ and $\Gj$}

We recall once again the notation $\Gphi(x)=\rho^{\lfloor x\rfloor/2}G^{\Phi_{\{x\}}}$, where  
$G^{\Phi_{\{x\}}}\eqdist\sigma_\ell(\Phi_{\{x\}})\mathcal{N}$ or $G^{\Phi_{\{x\}}}\eqdist\sigma(\Phi_{\{x\}})\mathcal{N}$
and $\mathcal{N}:=\mathcal{N}(0,1)$ is a standard normal variable independent of $W$, and $\Gt$ (respectively  $\Gphi$) stands for $\Gphi$ if $\Phi=\Phi^{\mathsf{t}}$ (respectively $\Phi=\Phi^i$). 
\begin{lemma}\label{lem:properties of the gaussian process}
	For  $j\in[J]$, let $\mathcal{H}(x)=\rho \mathrm{u}_j \Gt(x)-\Gj(x)$. Under the assumptions of Theorem \ref{thm:clt-general}, $\mathcal{H}(x)$
 is continuous for any $x\in [0,\infty)$.
\end{lemma}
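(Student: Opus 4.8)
The plan is to exploit the fact that $\mathcal{H}(x) = \rho\mathrm{u}_j\Gt(x) - \Gj(x)$ is exactly the limiting Gaussian process associated with the characteristic $\Phi := \rho\mathrm{u}_j\Phi^{\mathsf{t}} - \Phi^j$, which is the characteristic appearing in the Corollary right after Proposition \ref{lln}. In the notation of the excerpt, this is the characteristic with $a = \rho\mathrm{u}_j$, $b = -1$, so that $\E[\Phi_0(1)] = \rho\mathrm{u}_j\mathrm{1} - \mathrm{e}_j^{\top}A = -\mathrm{w}_j$. The key structural point, already extracted in the proof of Lemma \ref{lem:difference of composition of F}, is that $\Fphi(x) = \tfrac{\rho^x}{\rho-1}l_\rho(x)(\rho\mathrm{u}_j - \rho\mathrm{u}_j)W + o(\rho^x) = o(\rho^x)$: the leading terms of $\Ft$ and $\Fj$ cancel in this combination, which is precisely why the Corollary gives $\mathcal{Z}^\Phi(x)/(\rho^x l_\rho(x)) \to 0$. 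So $\mathcal{H}$ is the limit in \eqref{eq:fin-dim} of the rescaled process $\mathcal{X}^\Phi(n+x)/(n^{\ell+\frac12}\rho^{n/2}\sqrt{W})$, and by Proposition \ref{thm:conv-to-gauss} (applied to the linear combination) the convergence is in the Skorokhod space $\mathcal{D}(\R)$.

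The first step is to identify the jump structure of $\mathcal{X}^\Phi(x) = \mathcal{Z}^\Phi(x) - \Fphi(x)$. Since $\Fphi$ is continuous (it is, as noted, a linear interpolation between integer values), all discontinuities of $\mathcal{X}^\Phi$ come from $\mathcal{Z}^\Phi$. Now $\mathcal{Z}^\Phi(x) = \rho\mathrm{u}_j\Zt(x) - \Zj(x)$, and at a jump point $x$ (where some $U_u = \{x\}$ for a vertex $u$ of type $\type(u)$ at level $|u| = \lfloor x\rfloor + 1$, counted at time $0$ after the shift) the process $\Zt$ jumps by $1$ while $\Zj$ jumps by $\mathrm{1}\{\type(u) = j\}$. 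Hence the jump of $\mathcal{Z}^\Phi$ at $x$ equals $\rho\mathrm{u}_j - \mathrm{1}\{\type(u)=j\}$, which in general is nonzero — so $\mathcal{Z}^\Phi$ itself is \emph{not} continuous, and one cannot conclude continuity of $\mathcal{H}$ merely from a bound on individual jump sizes. Instead, the right statement is that the rescaled jumps vanish: the jump of $\mathcal{X}^\Phi(n+x)/(n^{\ell+\frac12}\rho^{n/2}\sqrt{W})$ at any point is $O(1/(n^{\ell+\frac12}\rho^{n/2})) \to 0$ uniformly, since each individual jump of $\mathcal{Z}^\Phi$ is bounded by $\rho\mathrm{u}_j + 1$. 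By the standard fact that a $\mathbf{J}_1$-limit of càdlàg processes whose maximal jump over any compact interval tends to $0$ must be continuous (e.g.\ via the characterization of $\mathbf{J}_1$ convergence, or Billingsley, as the oscillation modulus controls the jumps of the limit), the limit $\mathcal{H}$ is continuous on every compact interval, hence on $[0,\infty)$.

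The one point requiring care — and the main obstacle — is making the "maximal rescaled jump tends to $0$" argument rigorous when the denominator involves the \emph{random} variable $W$ and when $\ell = -\tfrac12$ (case i) of \cite[Theorem 3.5]{kolesko-sava-huss-char}), so that the normalization is merely $\rho^{n/2}$ with no polynomial help. Since on $\S$ we have $W > 0$ $\ProbS$-almost surely, and since the supremum of the jumps of $\mathcal{X}^\Phi$ over $x \in [n, n+K]$ is deterministically at most $(\rho\mathrm{u}_j + 1)$ regardless of $\ell$, the rescaled bound $(\rho\mathrm{u}_j+1)/(n^{\ell+\frac12}\rho^{n/2}\sqrt{W})$ still tends to $0$ as $n\to\infty$ because $\rho > 1$; the polynomial factor, present or not, only helps. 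Thus the argument is uniform in $\ell$. Combining this with the weak convergence from Proposition \ref{thm:conv-to-gauss} and the Skorokhod-to-continuity transfer gives that $\mathcal{H}(\cdot)$ has a continuous version; since $\mathcal{H}$ is Gaussian with the explicit covariance inherited from $G^{\Phi_{\{x\}}}$ and $\Gphi(x) = \rho^{\lfloor x\rfloor/2}G^{\Phi_{\{x\}}}$, which depends continuously on $x$ within each interval $[m, m+1)$ and matches at the endpoints by the interpolation identity, one concludes continuity of $\mathcal{H}$ on all of $[0,\infty)$.
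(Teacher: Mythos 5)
Your overall strategy — transferring continuity of the limit from the vanishing of the rescaled jumps of the prelimit processes, via the functional convergence of Proposition \ref{thm:conv-to-gauss} and the standard fact that a $\mathbf{J}_1$-limit of c\`adl\`ag processes with asymptotically vanishing maximal jumps is continuous — is legitimate and genuinely different from the paper's argument. However, the quantitative step that carries the whole proof is wrong. You claim that at a jump point the pair $(\Zt,\Zj)$ jumps by $(1,\ind{\type(u)=j})$, so that every jump of $\mathcal{Z}^{\Phi}=\rho\mathrm{u}_j\Zt-\Zj$ is bounded by $\rho\mathrm{u}_j+1$. By the definition \eqref{eq:char-urn-type} of the characteristic, $\Phi^j_x(0)\mathrm{e}_i=\ind{U\le x}L^{(i,j)}$, so when the threshold $U_u$ of an individual $u$ of generation $n$ and type $i$ is crossed, $\Zt(n+\cdot)$ jumps by $1$ while $\Zj(n+\cdot)$ jumps by $\langle\mathrm{e}_j,L(u)\mathrm{e}_i\rangle$, the number of type-$j$ children of $u$; this reading is forced by the embedding $B_j(k)=\Zj(\tau_k)$, because all type-$j$ balls produced by the $k$-th drawn ball enter the urn at that same instant. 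Under the standing assumptions $L^{(i,j)}$ is unbounded (only a $2+\delta$ moment is assumed), so the maximal jump of $\mathcal{X}^{\Phi}(n+\cdot)$ over a compact time window is $\rho\mathrm{u}_j+\max_u\langle\mathrm{e}_j,L(u)\mathrm{e}_{\type(u)}\rangle$, the maximum running over the $O_{\Prob}(\rho^{n})$ individuals of the relevant generations — not a deterministic constant. Hence the claim that the rescaled maximal jump is deterministically $O\big((n^{\ell+\frac12}\rho^{n/2}\sqrt W)^{-1}\big)$ is unjustified as written; it would be valid only for bounded offspring distributions.

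The gap is repairable, but the repair is exactly the step you skipped: conditioning on the generation sizes and using $\E\big[(L^{(i,j)})^{2+\delta}\big]<\infty$ together with $|Z_m|=O_{\Prob}(\rho^m)$, a union bound gives $\max_{|u|=m,\ n\le m\le n+K}\langle\mathrm{e}_j,L(u)\mathrm{e}_{\type(u)}\rangle=o_{\Prob}(\rho^{n/2})$, since $\rho^{n}\cdot\rho^{-n(2+\delta)/2}\to0$; with this estimate the vanishing-jump transfer goes through (you should also note that "applying Proposition \ref{thm:conv-to-gauss} to the linear combination" means using the $\R^2$-valued Skorokhod convergence, on which $(f,g)\mapsto\rho\mathrm{u}_jf-g$ is continuous — sums of separately $\mathbf{J}_1$-convergent sequences need not converge). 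For comparison, the paper never touches the prelimit: it proves continuity of $\Gt$ and $\Gj$ separately by the Kolmogorov continuity criterion, bounding the increment variances through $\sigma^2(\Phi^j_y-\Phi^j_x)\le C(y-x)$ and $\sigma_\ell^2(\Phi^j_y-\Phi^j_x)\le C(y-x)^2$ and using Gaussianity to pass to fourth moments; that route is shorter and yields continuity of each limit process (needed later for $\Xt$ in Theorem \ref{thm:general_limit_theorem}), not only of the combination $\mathcal H$.
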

\begin{proof}
%It is enough to prove that $\mathcal{H}(x)$ is continuous on $[0,1)$ and then the claim follows from Kolmogorov continuity theorem once we show that
%$$\E[|\mathcal{H}(x)-\mathcal{H}(y)|]\le C|x-y|,$$
%for some constant $C$ and any $x,y\in[0,1)$.
Since $\mathcal{H}$ is a linear combination of the Gaussian processes $\Gt$ and $\Gj$, it is enough to show continuity for both terms separately. We start with the continuity of $\Gj$. For any  $0\le x\le y\le 1$, since either 
$$\Gj(y)-\Gj(x)\eqdist(\sigma(\Phi^j_{y})-\sigma(\Phi^j_x))\mathcal{N}=\sigma(\Phi^j_{y}-\Phi^j_{x})\mathcal{N}$$
 or 
 $$\Gj(y)-\Gj(x)\eqdist(\sigma_\ell(\Phi^j_{y})-\sigma_\ell(\Phi^j_x))\mathcal{N}=\sigma_\ell(\Phi^j_{y}-\Phi^j_{x})\mathcal{N},$$ 
 depending on whether we are in case {\it i)} or {\it ii)} of Theorem 3.5 of  \cite{kolesko-sava-huss-char} respectively, we have to upper bound $\sigma^2(\Phi^j_{y}-\Phi^j_{x})$ and $\sigma_\ell^2(\Phi^j_{y}-\Phi^j_{x})$ by some power of $|y-x|$. The definition of $\sigma^2(\Phi)$ applied  to the characteristic $\Phi^j_{y}-\Phi^j_{x}$ yields:
\begin{align*}
	&\sigma^2(\Phi^j_{y}-\Phi^j_{x})=\sum_{k<0}\rho^{-k}\Var[\mathrm{e}_j\pi^{(1)}(y-x)A^{k-1})(L-A)]\mathrm{u}	\\+ &\Var[\mathrm{e}_j\ind{x< U\le y}-\mathrm{e}_j\pi^{(1)}(y-x)A^{-1}(L-A)]\mathrm{u}	
	+\rho\Var[(y-x)e_j\pi^{(3)}(L-A)]\mathrm{u}	\le C(y-x).
	\end{align*}
On the other hand, as for $\Phi^j_{y}-\Phi^j_{x}$ it holds $\mathrm{x}_2=\mathrm{x}_2(\Phi^j_{y}-\Phi^j_{x})=(y-x)\mathrm{e}_j\pi^{(2)}$,
we conclude that
	\begin{align*}
	\sigma_\ell^2(\Phi^j_{y}-\Phi^j_{x})= \frac{\rho^{-\ell}}{(2\ell+1)(\ell!)^2}\sum_{\lambda\in\sigma^2_A}\Var \left[\mathrm{x} _2 \pi_\lambda (A-\lambda I)^{\ell}L\right]\mathrm{u}\le C(y-x)^2.
	\end{align*}
In particular, in both of the cases {\it i)} and {\it ii)} of \cite[Theorem 3.5]{kolesko-sava-huss-char} we  have 
	$$\E[|\Gj(y)-\Gj(x)|^{2}]\le C(y-x),$$
	and therefore, since $\Gj$ is Gaussian, we obtain 
	$$\E[|\Gj(y)-\Gj(x)|^{4}]=6\E[|\Gj(y)-\Gj(x)|^{2}]^2\le C|y-x|^2,$$ 
	which by Kolmogorov continuity theorem implies that $\Gj$ is continuous.
	%\kk{In fact, we can show that $G^j$ is a gaussian bridge between $G(0)$ and $G(1)$ in the case i) and the linear function in case ii)} 
The same calculations as for $\Gj$ can be done in order to prove that $\Gt$ is continuous, so also $\mathcal{H}(x)=\rho\mathrm{u}_j\Gt(x)-\Gj(x)$ is continuous.
\end{proof}

\subsubsection*{Localization of the stopping times}	
	
This section is dedicated to the localization of the stopping times $(\tau_k)_{k\in\N}$. On the non-extinction event $\S$, for any $n\in\N$, we define the random variable 
 $$T_n=\log_\rho\frac{n(\rho-1)}{W},$$
and the function $h:\R\to\R$ by
 \begin{equation}\label{eq:h-function}
 h(x)= \floor x+\frac{\rho^{\{x\}}-1}{\rho-1}=x+\frac{\rho^{\{x\}}-1}{\rho-1}-\{x\}.
 \end{equation}
 Note that $h^{-1}$ is uniformly continuous and given by $h^{-1}(x)=\floor x+\log_\rho\big(1+(\rho-1)\{x\}\big)$.
 \begin{proposition}\label{prop:loc-stop-times}
 Under the assumptions (GW1)-(GW3), if for $k\in\N$ we write $t_k=h(T_k)$, then 
%For $k\in\N$ we define 
%\begin{align}\label{eq:definition of t_k}
%t_k\defeq \lfloor W_k \rfloor+\dfrac{\rho^{\{W_k\}}-1}{\rho-1}=W_k+\dfrac{\rho^{\{W_k\}}-1}{\rho-1}-\{W_k\}.
%\end{align}	
for $(\tau_k)$ defined as in \eqref{eq:tau-urn}, we have
$$\lim_{k\to\infty}(t_k-\tau_k)=\lim_{k\to\infty}(h(T_k)-\tau_k)=0,\quad\ProbS\textit{-almost surely}.$$
\end{proposition}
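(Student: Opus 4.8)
The plan is to identify $t_k=h(T_k)$ as the point at which the deterministic first-order profile from Proposition~\ref{lln} reaches the level $k$, and then to squeeze $\tau_k$ against it. Write
\[
G(x)=\frac{\rho^{x}l_\rho(x)}{\rho-1}=\frac{\rho^{\lfloor x\rfloor}\big(1+(\rho-1)\{x\}\big)}{\rho-1},
\]
a continuous, strictly increasing function on $[0,\infty)$; Proposition~\ref{lln} applied with $\Phi=\Phi^{\mathsf t}$ (i.e.\ $a=1$, $b=0$) reads $\Zt(x)/G(x)\to W$ $\ProbS$-almost surely. The first step is the algebraic identity
\[
G\big(h(x)\big)=\frac{\rho^{x}}{\rho-1},\qquad x\ge 0,
\]
which follows at once from $\lfloor h(x)\rfloor=\lfloor x\rfloor$, $\{h(x)\}=\tfrac{\rho^{\{x\}}-1}{\rho-1}$ and \eqref{eq:lper}. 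Evaluating at $x=T_k=\log_\rho\frac{k(\rho-1)}{W}$ gives $W\,G(t_k)=W\,G\big(h(T_k)\big)=W\cdot\tfrac{\rho^{T_k}}{\rho-1}=k$, so $t_k$ is precisely the argument at which the limiting profile $x\mapsto WG(x)$ equals $k$.

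Recall from the discussion preceding the statement that $\Zt$ is nondecreasing, integer-valued, increases by exactly $1$ at each of its (a.s.\ distinct) jump points, and $\Zt(0)=0$; hence it attains every nonnegative integer and $\Zt(\tau_k)=k$ for all $k$. Moreover $\Zt(x)$ is a.s.\ finite for each finite $x$ while, by Proposition~\ref{lln} and $W>0$ $\ProbS$-a.s.\ on $\mathcal S$, $\Zt(x)\to\infty$ $\ProbS$-a.s.; consequently $\tau_k<\infty$ for all $k$ and $\tau_k\to\infty$ $\ProbS$-a.s., and likewise $t_k\to\infty$ since $T_k\to\infty$.

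Now fix $\delta>0$. A direct computation gives $c_\delta:=\inf_{x\ge 0}G(x+\delta)/G(x)\ge 1+\frac{(\rho-1)\delta}{\rho}>1$: if $x$ and $x+\delta$ lie in the same unit interval the ratio equals $1+\frac{(\rho-1)\delta}{1+(\rho-1)\{x\}}$, which is $\ge 1+\frac{(\rho-1)\delta}{\rho}$ because $\{x\}<1$, and otherwise it is even larger. Combining $\Zt(x)=(1+o(1))WG(x)$ as $x\to\infty$ with $WG(t_k)=k$ and $t_k\to\infty$, we obtain, $\ProbS$-almost surely,
\begin{align*}
\Zt(t_k+\delta)&=(1+o(1))\,k\,\frac{G(t_k+\delta)}{G(t_k)}\ \ge\ (1+o(1))\,c_\delta\,k\ >\ k,\\
\Zt(t_k-\delta)&=(1+o(1))\,k\,\frac{G(t_k-\delta)}{G(t_k)}\ \le\ (1+o(1))\,c_\delta^{-1}\,k\ <\ k,
\end{align*}
for all $k$ large enough. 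Since $\Zt$ is nondecreasing with $\Zt(\tau_k)=k$, the first line forces $\tau_k\le t_k+\delta$ and the second forces $\tau_k>t_k-\delta$, i.e.\ $|\tau_k-t_k|\le\delta$ for all large $k$, $\ProbS$-almost surely. As $\delta>0$ was arbitrary, $\lim_{k\to\infty}(t_k-\tau_k)=0$ $\ProbS$-almost surely.

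The only point that is not completely mechanical is the passage from the \emph{multiplicative} asymptotics $\Zt(x)\sim WG(x)$ to \emph{additive} control of the inverse times $\tau_k$; this is exactly where the geometric growth of $G$ enters, via $c_\delta>1$, converting a $1+o(1)$ error in the value into an $o(1)$ error in the argument. (Equivalently one may sandwich $\tau_k$ between $h\big(T_k-\log_\rho(1+\varepsilon)\big)$ and $h\big(T_k-\log_\rho(1-\varepsilon)\big)$ using $G^{-1}(y)=h(\log_\rho((\rho-1)y))$ and the Lipschitz continuity of $h$, then let $\varepsilon\downarrow 0$.) Everything else — the identity $G\circ h=\rho^{(\cdot)}/(\rho-1)$, the unit-jump structure of $\Zt$, and $\tau_k,t_k\to\infty$ on $\mathcal S$ — is routine.
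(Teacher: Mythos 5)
Your proof is correct and takes essentially the same route as the paper: both arguments rest on the a.s.\ first-order asymptotics of Proposition \ref{lln} together with $\Zt(\tau_k)=k$, and use a $\delta$-squeeze that converts the multiplicative $(1+o(1))$ error into an additive $o(1)$ error in the time variable through the piecewise-exponential profile encoded by $h$. The only difference is bookkeeping: you compare $\Zt(t_k\pm\delta)$ with $k$ via monotonicity, unit jumps, and the gap $\inf_{x\ge 0}G(x+\delta)/G(x)>1$ for your profile $G$, whereas the paper inverts the same inequalities through $h^{-1}$ and $h$ and invokes the uniform continuity of $h$.
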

\begin{proof}
By Proposition \ref{lln}, the following $\ProbS$-almost sure convergence holds:
$$\lim_{x\to\infty}\frac{\Zt(x)}{\rho^{\lfloor x\rfloor}(1+(\rho-1)\{x\})}=\lim_{x\to\infty}\frac{\Zt(x)}{\rho^{x}l_\rho(x)}=\frac{1}{\rho-1}W.$$
We recall that $l_{\rho}:[0,\infty)\to\R$ is defined as $l_{\rho}(x)=(1+(\rho-1)\{x\})\rho^{-\{x\}}$.
Since $\Zt(\tau_k)=k$, we infer that for any $\delta>0$ and large enough $k$ we have
\begin{align*}
\frac{k}{\rho^{\tau_k}l_\rho(\tau_k)}\le \frac{1}{\rho-1}We^{\delta}
\quad \text{and}\quad 
 \frac{k}{\rho^{\tau_k-\delta}l_\rho(\tau_k-\delta)}\ge \frac{1}{\rho-1} We^{-\delta}.
 \end{align*}
 This can be rewritten as 
 $$ \tau_k +\log_{\rho}l_{\rho}(\tau_k-\delta)-\log_{\rho}e^{-\delta}\leq T_k \leq  \tau_k +\log_{\rho} l_{\rho}(\tau_k)+\log_{\rho}e^{\delta}. $$
 Remark that we have
 $$ \tau_k+\log_{\rho} l_{\rho}(\tau_k)= \lfloor \tau_k\rfloor + \log_{\rho}(1+(\rho-1)\{\tau_k\})=h^{-1}(\tau_k),$$  
and the inverse of the increasing function $h^{-1}(x)$ is given by $ \lfloor x\rfloor +\frac{\rho^{\{x\}}-1}{\rho-1}=h(x)$
which then yields the following inequalities
\begin{align*}
\lfloor T_k -\log_{\rho}e^{\delta}\rfloor+\dfrac{\rho^{\{T_k-\log_{\rho}e^{\delta}\}}-1}{\rho-1}\le\tau_k
 &\le 
 \lfloor T_k -\log_{\rho}e^{-\delta}\rfloor+\dfrac{\rho^{\{T_k-\log_{\rho}e^{-\delta}\}}-1}{\rho-1}+\delta.
 \end{align*}
 	From the uniform continuity of $h(x)$, by letting $\delta\to 0$, we obtain the claim.
 \end{proof}
The above proposition implies that 
$$\tau_n=\log_{\rho}n+O(1),\quad\ProbS\text{-almost surely.}$$
 
\subsection{Limit theorems for $B_j$}

%\textcolor{blue}{The proof below should be rewritten!}
\begin{proposition}
	\label{lem:clt for fluctuations}
Under the assumptions of Theorem \ref{thm:clt-general}, let $\Phi:\Z\to\R^{1\times J}$ be any characteristic such that the following stable convergence holds 
\begin{align}
		\label{eq:assumption FLT}
	\frac{\Xphi(n+x)}{n^{\ell+\frac12}\rho^{n/2}\sqrt W}\stabSto \Gphi(x)\quad\text{in }\mathcal D(\R),
\end{align}
 for some continuous Gaussian process $\Gphi$ with $\Var \left[\Gphi(x)\right]>0$, for any $x\in\R$.  Then there exists a continuous, positive, 1-periodic function $\Uppsi^{\Phi}$ such that for $\tau_n$ as in \eqref{eq:tau-urn} and $T_n=\log_\rho\frac{n(\rho-1)}{W}$ it holds
	\begin{align}
		\label{eq:fluctuations of X(tau n)}
		\frac{\Xphi(\tau_n)}{ \sqrt n (\log_\rho n)^{l+\frac12}\Uppsi^{\Phi}(T_n) }\distSto \cN(0,1),\quad \text{ as } n\to\infty.
	\end{align}
\end{proposition}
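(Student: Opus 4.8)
Throughout write $c=\log_\rho\frac{\rho-1}{W}$, so that $T_n=\log_\rho n+c$, put $g(u)=\frac{\rho^{u}-1}{\rho-1}$, and recall from \eqref{eq:h-function} that $h(x)=\lfloor x\rfloor+g(\{x\})$ is continuous on $\R$ and satisfies $h(x+m)=h(x)+m$ for $m\in\Z$. The function we shall produce is
$$\Uppsi^{\Phi}(u)=\sqrt{\rho-1}\;\rho^{-u/2}\,\sqrt{\Var\big[\Gphi\big(g(u)\big)\big]},\qquad u\in[0,1),$$
extended to a $1$-periodic function on $\R$. Continuity of $\Gphi$ implies continuity of $x\mapsto\Var[\Gphi(x)]$, so $\Uppsi^{\Phi}$ is continuous on $(0,1)$; positivity is the hypothesis $\Var[\Gphi(x)]>0$. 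The marginal law of $\Gphi$ gives $\Var[\Gphi(1)]=\rho\,\Var[\Gphi(0)]$, while continuity of $\Gphi$ at the integer $1$ gives $\lim_{y\to1^-}\Var[\Gphi(y)]=\Var[\Gphi(1)]$; since $g(1^-)=1$, these two facts yield $\lim_{u\to1^-}\Uppsi^{\Phi}(u)=\Uppsi^{\Phi}(0)$, so $\Uppsi^{\Phi}$ is a genuine positive, continuous, $1$-periodic function.

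The core of the proof is to feed the data-dependent time $\tau_n$ into the functional limit theorem \eqref{eq:assumption FLT}, which is indexed by a \emph{deterministic} integer. It is enough to show that an arbitrary subsequence of the quotient on the left of \eqref{eq:fluctuations of X(tau n)} has a further subsequence converging in law, conditionally on $\S$, to $\cN(0,1)$. Fix such a subsequence and refine it, without relabelling, so that $\{\log_\rho n\}\to\theta_0$ for some $\theta_0\in[0,1]$. Set $a_n=\lfloor\log_\rho n\rfloor$ (deterministic, $a_n\to\infty$) and $r_n=\tau_n-a_n$. By Proposition~\ref{prop:loc-stop-times} and the subsequent estimate $\tau_n=\log_\rho n+O(1)$, we have, $\ProbS$-a.s.,
$$r_n=h(T_n)-a_n+o(1)=h\big(\{\log_\rho n\}+c\big)+o(1)\;\longrightarrow\;r:=h(\theta_0+c),$$
so $(r_n)$ is eventually bounded. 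Applying \eqref{eq:assumption FLT} along the deterministic index sequence $a_n$ gives $\frac{\Xphi(a_n+\,\cdot\,)}{a_n^{\ell+1/2}\rho^{a_n/2}\sqrt W}\stabSto\Gphi(\cdot)$ in $\mathcal D(\R)$; coupling this with the $\ProbS$-a.s.\ convergence $r_n\to r$ — legitimate because $r$ is measurable with respect to the $\sigma$-field governing the stable convergence — and then applying the evaluation map $\mathcal D(\R)\times\R\to\R$, which is continuous at $(\Gphi,r)$ because $\Gphi$ has a.s.\ continuous paths, we obtain
$$\frac{\Xphi(\tau_n)}{a_n^{\ell+1/2}\rho^{a_n/2}\sqrt W}=\frac{\Xphi(a_n+r_n)}{a_n^{\ell+1/2}\rho^{a_n/2}\sqrt W}\;\stabSto\;\Gphi(r).$$

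It remains to rebalance the normalisation and identify $\Uppsi^{\Phi}$. From $n=\rho^{a_n}\rho^{\{\log_\rho n\}}$ and $a_n=\log_\rho n+O(1)$ one has, $\ProbS$-a.s., $\dfrac{a_n^{\ell+1/2}\rho^{a_n/2}\sqrt W}{\sqrt n\,(\log_\rho n)^{\ell+1/2}}\to\rho^{-\theta_0/2}\sqrt W$, whence
$$\frac{\Xphi(\tau_n)}{\sqrt n\,(\log_\rho n)^{\ell+1/2}}\;\stabSto\;\rho^{-\theta_0/2}\sqrt W\,\Gphi(r).$$
By stability of \eqref{eq:assumption FLT} the limit process is independent of $\mathcal A$, so conditionally on $\mathcal A$ the variable $\Gphi(r)$ is centred Gaussian with variance $\Var[\Gphi(r)]$ and $\Gphi(r)/\sqrt{\Var[\Gphi(r)]}$ is a standard normal, conditionally on $\mathcal A$ and a fortiori on $\S$. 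A direct computation using $\rho^{c/2}=\sqrt{(\rho-1)/W}$, $\lfloor\theta_0+c\rfloor-\theta_0=c-\{\theta_0+c\}$ and $\Var[\Gphi(r)]=\rho^{\lfloor\theta_0+c\rfloor}\Var[\Gphi(g(\{\theta_0+c\}))]$ shows that the $\ProbS$-a.s.\ limit of $\Uppsi^{\Phi}(T_n)=\Uppsi^{\Phi}(\{T_n\})$ along the chosen subsequence is exactly $\rho^{-\theta_0/2}\sqrt W\sqrt{\Var[\Gphi(r)]}$ (when $\theta_0+c\notin\Z$ this uses $\{T_n\}\to\{\theta_0+c\}$, otherwise the continuity and $1$-periodicity of $\Uppsi^{\Phi}$ established above). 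Dividing the two displays and invoking Slutsky's lemma gives
$$\frac{\Xphi(\tau_n)}{\sqrt n\,(\log_\rho n)^{\ell+1/2}\,\Uppsi^{\Phi}(T_n)}\;\stabSto\;\frac{\Gphi(r)}{\sqrt{\Var[\Gphi(r)]}},$$
whose right-hand side is $\cN(0,1)$ conditionally on $\S$. Since this limiting law does not depend on the refining subsequence, \eqref{eq:fluctuations of X(tau n)} follows.

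The step I expect to be the true obstacle is the composition of the Skorokhod-$\mathbf{J}_1$ functional limit theorem with the stopping time $\tau_n$: the argument $\tau_n$ is not asymptotically deterministic — only its integer part lies within a bounded (random) distance of the deterministic $\lfloor\log_\rho n\rfloor$, while its fractional location must be frozen by passing to a subsequence — so the transfer to the random argument rests on the a.s.\ continuity of the limit process $\Gphi$ (for the characteristics $\Phi^{\mathsf{t}},\Phi^{j}$ this is Lemma~\ref{lem:properties of the gaussian process}) together with the stability of the convergence, which lets one carry the $\mathcal A$-measurable data $W$, $c$, $r$ along. The remaining points — that the explicit $\Uppsi^{\Phi}$ glues into a continuous $1$-periodic function across the integers, and the elementary identities matching the normalisations — are routine once the fractional parts are tracked carefully.
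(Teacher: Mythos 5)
Your proof is correct, and it reaches the same periodic function as the paper ($\Uppsi^{\Phi}(u)=\big((\rho-1)\rho^{-\{u\}}\Var[\Gphi(h(\{u\}))]\big)^{1/2}$, since $h(u)=g(u)$ on $[0,1)$), but it is organized differently. The paper first time-changes by $h$, renormalizes to a process $X(\cdot)$ whose shifts converge stably to a stationary continuous limit $G$, evaluates at the random shift $S=-\log_\rho\frac{W}{\rho-1}$ via Lemma \ref{lem:auxiliary lemma for cadlag processes}\textit{ ii)}, and then transfers from $t_n=h(T_n)$ to $\tau_n$ by the modulus-of-continuity estimate of Lemma \ref{lem:auxiliary lemma for cadlag processes}\textit{ i)} combined with Proposition \ref{prop:loc-stop-times}. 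You instead bypass the auxiliary lemma entirely: you pass to a subsequence along which $\{\log_\rho n\}\to\theta_0$, use Proposition \ref{prop:loc-stop-times} to get $\tau_n-\lfloor\log_\rho n\rfloor\to h(\theta_0+c)$ $\ProbS$-a.s., and then evaluate the functional limit directly at this a.s.-convergent random time, invoking joint stable convergence with $\mathcal A$-measurable limits and continuity of the evaluation map $(f,s)\mapsto f(s)$ at continuous paths — exactly the ingredients that sit inside the paper's lemma (whose proof also freezes $\{a_n\}$ along a subsequence), but assembled in one step rather than through the intermediate time $t_n$ and the windowed modulus-of-continuity argument. What your route buys is brevity and the elimination of one lemma; what the paper's route buys is a reusable statement about evaluating $\mathcal D(\R)$-valued limits at random times and an explicit quantitative control of the $\tau_n$-versus-$t_n$ replacement. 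Two small points deserve care but are not gaps: continuity of $x\mapsto\Var[\Gphi(x)]$ follows from a.s. path continuity only because $\Gphi$ is Gaussian (a.s. convergence of Gaussians gives $\mathcal L^2$ convergence), and your phrase ``the limit process is independent of $\mathcal A$'' is stronger than what stable convergence alone provides — what is actually needed (and what the paper also uses implicitly, both in requiring $S\perp Y$ in its lemma and in asserting the limit is $\mathcal N(0,1)$ conditionally on $\S$) is the independence of the Gaussian limit from $W$ and $\S$, which is supplied by the construction of the limits in \cite[Theorem 3.5]{kolesko-sava-huss-char}. Your verification of the gluing of $\Uppsi^{\Phi}$ at integer points via $\Var[\Gphi(x+1)]=\rho\Var[\Gphi(x)]$, and your treatment of the boundary case $\theta_0+c\in\Z$, are details the paper leaves implicit and are welcome additions.
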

\begin{proof}  
The key idea in the proof is to use the functional limit theorem for $\Xphi$ in order to replace $\Xphi(\tau_n)$ by $\Xphi(t_n)$. Recall that for $h$ defined in \eqref{eq:h-function}, which is continuous and strictly increasing we have used the notation $t_n=h(\log_\rho n-\log_\rho\frac{W}{\rho-1})=h(T_n)$. Furthermore, for any $x\in\R$ and $n\in\N$ it holds $h(n+x)=n+h(x)$. Consequently, by \eqref{eq:assumption FLT}, we have
	\begin{align*}
		\frac{\Xphi \big(h(n+x)\big)}{n^{\ell+\frac12}\rho^{n/2}\sqrt W}=\frac{\Xphi \big(n+h(x)\big)}{n^{\ell+\frac12}\rho^{n/2}\sqrt W}\stabSto \Gphi \big(h(x)\big)\quad\text{in }\mathcal D(\R),
	\end{align*}
and the latter convergence can be rewritten as
	\begin{align*}
			\frac{\Xphi \big(h(n+x)\big)}{\Big(n^{2\ell+1}\rho^{n} W\Var \left[\Gphi\big(h(x)\big)\right]\Big)^{1/2}}\stabSto G(x)\quad\text{in }\mathcal D(\R),
	\end{align*}
for some stationary and continuous process $G$ with $G(0)\eqdist\mathcal N(0,1)$.
Note that one consequence of the convergence in \eqref{eq:assumption FLT} is the following property of the limiting process: $\Gphi(x+1)\eqdist\sqrt\rho \Gphi(x)$. Hence, the previous convergence is equivalent to 
\begin{align*}
	\frac{\Xphi \big(h(n+x)\big)}{\Big(\big((n+x)\vee 1\big)^{2\ell+1}\rho^{n+x}\rho^{-\{x\}}W\Var \left[\Gphi\big(h(\{x\})\big)\right]\Big)^{1/2}}\stabSto G(x)\quad\text{in }\mathcal D(\R).
\end{align*}
In other words, for the function $\Uppsi^\Phi$ defined by  $\Uppsi^\Phi(x)=\Big((\rho-1)\rho^{-\{x\}} \Var \left[\Gphi\big(h(\{x\})\big)\right]\Big)^{1/2}$ which is continuous and positive, and for
	\begin{align*}
		X(x)=\frac{\Xphi \big(h(x)\big)}{\Big(\big(x\vee 1\big)^{2\ell+1}\rho^{x}\frac{W}{\rho-1}\Big)^{1/2} \Uppsi^\Phi(x)}
	\end{align*}
it holds $X(n+x)\stabSto G(x)$, and therefore an application of Lemma \ref{lem:auxiliary lemma for cadlag processes}\textit{ ii)} with $a_n= \log_\rho n$ yields 
\begin{align}
	\label{eq:convergence of Xtn}
	X\big(h^{-1}(t_n)\big)=X\big(\log_\rho n-\log_\rho\tfrac{W}{\rho-1}\big)\distSto G(0).
\end{align}
Since $h^{-1}(x)$ is uniformly continuous, by Proposition~\ref{prop:loc-stop-times} we get
\begin{align}
	\label{eq:comparing tn and taun}
	h^{-1}(\tau_n)-h^{-1}(t_n)\to 0,\quad\ProbS\text{-almost surely}.
\end{align}

We claim that from Lemma \ref{lem:auxiliary lemma for cadlag processes}\textit{ i)} with $N_n= \floor{\log_{\rho} n}$ and $\delta_n=2^{-n}$ it follows that 
\begin{align}
	\label{eq:comparision of X(tn) and X(taun)}
	X\big(h^{-1}(\tau_n)\big)-X\big(h^{-1}(t_n)\big) \ProbSto0, \quad \text{as } n\to\infty.
\end{align}
Indeed, for fixed $m\in\N$, on the event $|\log_{\rho} W|\le k_m-2\delta_m-1-\log_\rho(\rho-1)$ and  $|h^{-1}(\tau_n)-h^{-1}(t_n)|\le \delta_m$ we have 
$$\big|X\big(h^{-1}(\tau_n)\big)-X\big(h^{-1}(t_n)\big)\big|\le 	\omega(X_{N_{m}},k_m,\delta_{m}).$$
In turn, for any $\varepsilon>0$ we get
\begin{align*}
	&\ProbS\Big(\big|X\big(h^{-1}(\tau_n)\big)-X\big(h^{-1}(t_n)\big)\big|>\varepsilon\Big)\le \ProbS\big(	\omega(X_{N_{m}},k_m,\delta_{m})>\varepsilon\big)\\
	&+\ProbS\big(|\log_\rho W|> k_m-2\delta_m-1-\log_\rho(\rho-1)\big)
	+\ProbS\big(|h^{-1}(\tau_n)-h^{-1}(t_n)|> \delta_m\big).
\end{align*}
Taking first the limit as $n\to\infty$ and then as $m\to\infty$ and using \eqref{eq:comparing tn and taun}, we get \eqref{eq:comparision of X(tn) and X(taun)}.
Finally, \eqref{eq:comparision of X(tn) and X(taun)} and \eqref{eq:convergence of Xtn} imply that $$\frac{\Xphi(\tau_n)}{\big(\kk{h^{-1}(\tau_n)}\vee 1\big)^{\ell+\frac12}\sqrt n\rho^{(h^{-1}(\tau_n)-h^{-1}(t_n))/2} \Uppsi^\Phi(h^{-1}(\tau_n))}=X\big(h^{-1}(\tau_n)\big)\distSto G(0),$$
which together with	\eqref{eq:comparing tn and taun} and the $\ProbS$-almost sure convergence   of 
$\frac{\Uppsi^\Phi(h^{-1}(\tau_n))}{\Uppsi^\Phi(h^{-1}(t_n))}$ and $\frac{h^{-1}(\tau_n)}{\log_\rho n}$ to 1 yield that
\begin{align*}
	\frac{\Xphi(\tau_n)}{\sqrt n(\log_\rho n)^{\ell+\frac12} \Uppsi^\Phi(T_n)}\distSto \mathcal N(0,1),
\end{align*}
that is \eqref{eq:fluctuations of X(tau n)} holds and this finalizes the proof.
\end{proof}
Using the auxiliary result that we just proven, we can now state and prove our main result.
\begin{theorem}
	\label{thm:general_limit_theorem}
Suppose that \eqref{eq:condition for nondegenerate sigma} holds and all assumptions from Theorem \ref{thm:clt-general} are satisfied. Then there exists a continuous, positive and $1$-periodic function $\Uppsi$ such that, for $T_n=\log_{\rho}\frac{n(\rho-1)}{W}$  and any $j\in[J]$ we have
	\begin{align*}
		\frac{B_j(n)-\Fj\big(\Fi(n)\big)}{\sqrt n(\log_{\rho} n)^{\ell+\frac12}\Uppsi(T_n)}\distSto \mathcal{N}(0,1), \quad \text{as }n\to\infty.
	\end{align*}
\end{theorem}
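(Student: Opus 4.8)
The plan is to rewrite $B_j(n)=\Zj(\tau_n)$ in terms of the centred process $\Xphi$ attached to the characteristic $\Phi:=\rho\mathrm u_j\Phi^{\mathsf{t}}-\Phi^j\in\mathcal C$, and then apply Proposition~\ref{lem:clt for fluctuations}. First I would split
$$B_j(n)-\Fj\big(\Fi(n)\big)=\Xj(\tau_n)+\Big(\Fj(\tau_n)-\Fj\big(\Fi(n)\big)\Big).$$
By the definition \eqref{eq:tau-urn} of $\tau_n$ we have $\Zt(\tau_n)=n$, hence $\Ft(\tau_n)=n-\Xt(\tau_n)$ and $\tau_n=\Fi\big(n-\Xt(\tau_n)\big)$, so the bracketed term equals $g\big(n-\Xt(\tau_n)\big)-g(n)$ with $g:=\Fj\circ\Fi$. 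Feeding the increment $s=\Xt(\tau_n)$ (interchanging the two arguments when $s<0$) into Lemma~\ref{lem:difference of composition of F}, and using boundedness of $g'$ near $\infty$ to handle the range $|\Xt(\tau_n)|<1$, gives
$$\Fj(\tau_n)-\Fj\big(\Fi(n)\big)=-\rho\mathrm u_j\,\Xt(\tau_n)+R_n,\qquad |R_n|\le \varepsilon_n\big(|\Xt(\tau_n)|\vee 1\big),$$
with $\varepsilon_n\to0$ $\ProbS$-a.s. Since the CMJ processes and $\Phi\mapsto\Fphi$ are linear, $\Xphi=\rho\mathrm u_j\Xt-\Xj$, and therefore
$$B_j(n)-\Fj\big(\Fi(n)\big)=-\Xphi(\tau_n)+R_n.$$

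Next I would check the three hypotheses of Proposition~\ref{lem:clt for fluctuations} for this $\Phi$. The stable functional convergence \eqref{eq:assumption FLT} for $\Xphi$ follows from Proposition~\ref{thm:conv-to-gauss} and linearity, with limit $\Gphi=\rho\mathrm u_j\Gt-\Gj=\mathcal H$, which is continuous by Lemma~\ref{lem:properties of the gaussian process}. For $\Var[\Gphi(x)]>0$, note that \eqref{eq:condition for nondegenerate sigma} forces $\mathrm w_j\ne\mathbf 0$ (all its summands vanish otherwise), so Proposition~\ref{prop:pos-variances} applies and gives $\sigma_\ell^2(\Phi_{\{x\}})>0$ in its case i), or $\sigma^2(\Phi_{\{x\}})>0$ in its case ii), for every $x$; as $\Var[\Gphi(x)]=\rho^{\lfloor x\rfloor}\Var[G^{\Phi_{\{x\}}}]$ equals $\rho^{\lfloor x\rfloor}\sigma_\ell^2(\Phi_{\{x\}})$ or $\rho^{\lfloor x\rfloor}\sigma^2(\Phi_{\{x\}})$, this is exactly the required positivity, and Proposition~\ref{prop:pos-variances}(i) also shows the maximal index $\ell$ is the same for all $x$. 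Proposition~\ref{lem:clt for fluctuations} then yields a continuous, positive, $1$-periodic function $\Uppsi:=\Uppsi^{\Phi}$ (depending on $j$ through $\Phi$) with
$$\frac{\Xphi(\tau_n)}{\sqrt n(\log_\rho n)^{\ell+\frac12}\,\Uppsi(T_n)}\distSto\mathcal N(0,1).$$

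It then remains to show $R_n=o_{\ProbS}\big(\sqrt n(\log_\rho n)^{\ell+\frac12}\big)$. Using the representation $F_n^{\Phi}=\mathrm x_1(\Phi)A_1^nW^{(1)}+\mathrm x_2(\Phi)A_2^nZ_0$ and the spectral bounds on $A_1,A_2$, the first-order estimate in the proof of Lemma~\ref{lem:difference of composition of F} can be sharpened to $(\Ft)'(x)=\rho^{\lfloor x\rfloor}W\big(1+O(x^{c}\rho^{-x/2})\big)$ and similarly for $\Fj$; hence $g'(u)-\rho\mathrm u_j=O\big(u^{-1/2}(\log u)^{c}\big)$, so that in fact $\varepsilon_n=O\big(n^{-1/2}\mathrm{poly}(\log n)\big)$ $\ProbS$-a.s. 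Since the tightness results following Theorem~\ref{thm:clt-general} together with $\tau_n=\log_\rho n+O(1)$ (Proposition~\ref{prop:loc-stop-times}) give $|\Xt(\tau_n)|=O_{\ProbS}\big(\sqrt n\,\mathrm{poly}(\log n)\big)$, we obtain $R_n=O_{\ProbS}\big(\mathrm{poly}(\log n)\big)=o_{\ProbS}\big(\sqrt n(\log_\rho n)^{\ell+\frac12}\big)$, using also that $\Uppsi$ is bounded away from $0$. A Slutsky argument and the symmetry of $\mathcal N(0,1)$ then turn the above convergence for $-\Xphi(\tau_n)$ into the asserted one for $B_j(n)-\Fj(\Fi(n))$.

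The step I expect to be the main obstacle is the control of $R_n$. Lemma~\ref{lem:difference of composition of F} by itself only yields $R_n=o(|\Xt(\tau_n)|)$ with no rate, whereas $|\Xt(\tau_n)|$ can a priori be of size $\sqrt n$ times a power of $\log n$ larger than the power $\ell+\tfrac12$ associated with $\Phi$ (the scaling exponent attached to $\Phi^{\mathsf{t}}$ need not be dominated by the one attached to $\rho\mathrm u_j\Phi^{\mathsf{t}}-\Phi^j$). Hence one genuinely has to extract the quantitative $n^{-1/2}$-decay of $\varepsilon_n$ from the spectral gap, under the $(2+\delta)$-moment hypothesis, rather than merely $\varepsilon_n\to0$.
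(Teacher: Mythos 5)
Your route is essentially the paper's: the same decomposition $B_j(n)-\Fj(\Fi(n))=\Xj(\tau_n)+\big(\Fj(\tau_n)-\Fj(\Fi(n))\big)$, the same linearization of the second term via $\Ft(\tau_n)=n-\Xt(\tau_n)$ and Lemma \ref{lem:difference of composition of F}, and the same verification (via Proposition \ref{thm:conv-to-gauss}, Proposition \ref{prop:pos-variances} with $\mathrm{w}_j\neq\mathrm{0}$ forced by \eqref{eq:condition for nondegenerate sigma}, and Lemma \ref{lem:properties of the gaussian process}) that $\Phi=\rho\mathrm{u}_j\Phi^{\mathsf{t}}-\Phi^j$ satisfies the hypotheses of Proposition \ref{lem:clt for fluctuations}. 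The one place you genuinely deviate is the remainder $R_n$, and there your quantitative claim is false as stated: you assert $(\Ft)'(x)=\rho^{\floor{x}}W\big(1+O(x^{c}\rho^{-x/2})\big)$, but by \eqref{eq:fluctuations-phi} the term $\mathrm{x}_1(\Phi^{\mathsf t}_0)A_1^nW^{(1)}$ contributes $\lambda^n(\cdot)$ for \emph{every} $\lambda\in\sigma_A^1$, so when $\gamma>\sqrt\rho$ the relative error in $(\Ft)'$ and $(\Fj)'$ is only $O\big(x^{c}(\gamma/\rho)^{x}\big)$, hence $\varepsilon_n=O\big(n^{\log_\rho\gamma-1}\mathrm{poly}(\log n)\big)$ rather than $O\big(n^{-1/2}\mathrm{poly}(\log n)\big)$, and $R_n$ need not be $O_{\Prob}(\mathrm{poly}(\log n))$. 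The argument nevertheless survives after this correction: with $|\Xt(\tau_n)|=O_{\ProbS}\big(\sqrt n\,\mathrm{poly}(\log n)\big)$ one gets $R_n=O_{\ProbS}\big(n^{\,1/2+\log_\rho(\gamma\vee\sqrt\rho)-1}\,\mathrm{poly}(\log n)\big)=o_{\ProbS}(\sqrt n)$ because $\gamma<\rho$ strictly, and $o_{\ProbS}(\sqrt n)$ suffices since $\ell+\tfrac12\ge 0$ and $\Uppsi$ is bounded away from zero. So you should state the sharpened estimate with the exponent $\gamma\vee\sqrt\rho$ coming from the full spectral decomposition (the same computation the paper performs in the corollary after Theorem \ref{thm:general_limit_theorem}), not with $\sqrt\rho$.

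For comparison, the paper does not quantify the $o(1)$ from Lemma \ref{lem:difference of composition of F} at all: it applies Proposition \ref{lem:clt for fluctuations} a second time, to $\Xt$, and concludes that $\frac{\Xt(\tau_n)}{\sqrt n(\log_\rho n)^{\ell+1/2}}\cdot o(1)\Probto 0$. Your closing remark — that the scaling exponent attached to $\Phi^{\mathsf t}$ is a priori not dominated by the one attached to $\rho\mathrm{u}_j\Phi^{\mathsf{t}}-\Phi^j$, so a rate for the $o(1)$ is really needed — is a fair reading of why that line of the paper is terse; either the corrected polynomial rate above (which beats any power of $\log n$, so the exponent mismatch is harmless) or an argument matching the two exponents is required, and your strategy, once the $\rho^{-x/2}$ bound is replaced by the $(\gamma\vee\sqrt\rho)/\rho$ bound, closes this cleanly.
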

\begin{proof}
Since for any $j\in[J]$,	we have $\Zj(\tau_n)=B_j(n)$ and $\Xj(n)=\Zj(n)-\Fj(n)$ , we can write
\begin{align*}
B_j(n)&=\Fj(\tau_n)+\Xj(\tau_n)\\
&=\Fj\big(\Fi(n)\big)+\Xj(\tau_n)+(\Fj\circ \Fi\circ \Ft(\tau_n)-\Fj(\Fi(n))).
\end{align*}
Due to the fact that $\Ft(\tau_n)=n-\Xt(\tau_n)$ together with Lemma \ref{lem:difference of composition of F}, we obtain
\begin{align*}
\Fj\circ \Fi\circ \Ft(\tau_n)-\Fj\circ \Fi(n)+\rho \mathrm{u}_j  \Xt(\tau_n)=\Xt(\tau_n)o(1),\quad \ProbS\text{-almost surely},
\end{align*}
and therefore it holds 
\begin{align*}
B_j(n)-\Fj(\Fi(n))=\Xj(\tau_n)-\rho  \mathrm{u}_j\Xt(\tau_n)+o(1)\Xt(\tau_n)\quad \ProbS\text{-almost surely}.
\end{align*}
By Proposition \ref{prop:pos-variances} (positivity of the variances of the limiting process) and Lemma \ref{lem:properties of the gaussian process} (continuity of the limit processes)  the characteristics $\Phi=\Phi^j-\rho\mathrm{u}_j\Phi^{\mathsf{t}}$ and $\Phi=\Phi^{\mathsf{t}}$ and the corresponding processes $\Xphi$ with this characteristic satisfy the assumptions of Proposition \ref{lem:clt for fluctuations},  so we can apply Proposition \ref{lem:clt for fluctuations} to the processes $\Xphi=\Xj-\rho\mathrm{u}_j\Xt$ and to $\Xt$ in order to obtain
\begin{align*}
\frac{\Xj(\tau_n)-{\rho}\mathrm{u}_j\Xt(\tau_n)}{ \sqrt n (\log_\rho n)^{\ell+\frac12}\Uppsi(T_n)} \distSto\cN(0,1),
\end{align*} for some function $\Uppsi$ which is continuous, positive, and $1$-periodic, and
$\frac{\Xt(\tau_n)}{ \sqrt n (\log_\rho n)^{\ell+\frac12} }\cdot o(1) \Probto 0$ as $n\to\infty$, and this completes the proof.
\end{proof}

\vspace{0.2cm}
Finally, in view of Theorem \ref{thm:general_limit_theorem} it suffices to find an expansion of $\Fj(\Fi(n))$ up to an error of order $o(n^{\log_\rho\gamma})$
in order to prove Theorem \ref{them:main_CLT}, which will then follow immediately from the next corollary.

\begin{corollary}
Under the assumptions of Theorem \ref{thm:general_limit_theorem} suppose that all eigenvalues in $\Gamma$ are simple. Then the following holds:
	\vspace{-0.3cm}
	\begin{enumerate}[i)]\setlength\itemsep{-0.2em}
		\item If $\gamma>\sqrt \rho$, then for any $\lambda\in\Gamma$ there is a $1$-periodic, continuous function $f_\lambda:\R\to\C$ and a random variable $X_\lambda$ such that
		$$B_j(n)=\rho\mathrm{u}_j\cdot n+\sum_{\lambda\in \Gamma}n^{\log_\rho\lambda}f_\lambda(T_n)X_\lambda+o_{\Prob}\Big(n^{\log_\rho\gamma}\Big)$$
		\vspace{-0.5cm}
		\item If $\gamma=\sqrt \rho$, then there is a $1$-periodic, continuous function $\Uppsi:\R\to(0,\infty)$  such that the following convergence holds:
		$$\frac{B_j(n)-\rho\mathrm{u}_j\cdot n}{\sqrt n(\log_{\rho} n)^{\frac12}\Uppsi(T_n)}\distSto \mathcal{N}(0,1),\quad \text{as }n\to\infty.$$
		\vspace{-0.5cm}
		\item If $\gamma<\sqrt \rho$, then there is a $1$-periodic, continuous function $\Uppsi:\R\to(0,\infty)$  such that the following convergence holds:
		$$\frac{B_j(n)-\rho\mathrm{u}_j\cdot n}{\sqrt n\Uppsi(T_n)}\distSto \mathcal{N}(0,1),\quad \text{as }n\to\infty.$$
	\end{enumerate} 
\end{corollary}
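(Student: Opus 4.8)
In view of Theorem~\ref{thm:general_limit_theorem} it suffices to expand $\Fj\big(\Fi(n)\big)$. Put $\Phi=\Phi^j-\rho\mathrm{u}_j\Phi^{\mathsf{t}}\in\mathcal{C}$; since $\Ft\big(\Fi(n)\big)=n$ we have $\Fj\big(\Fi(n)\big)=\rho\mathrm{u}_j\,n+\Fphi\big(\Fi(n)\big)$, so everything reduces to the asymptotics of $\Fphi$ near $\Fi(n)$. By \eqref{eq:fluctuations-phi}, $\Fphi$ is the linear interpolation of its integer values $\Fphi(m)=\mathrm{x}_1(\Phi_0)A_1^mW^{(1)}+\mathrm{x}_2(\Phi_0)A_2^mZ_0$ with $\mathrm{x}_i(\Phi_0)=\mathrm{w}_j\pi^{(i)}(A_i-I)^{-1}$, so only eigenvalues in $\sigma_A^1$ (through $A_1^m$) and in $\sigma_A^2$ (through $A_2^m$) occur. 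Moreover $\mathrm{x}_1(\Phi_0)\pi_\rho=(\rho-1)^{-1}(\mathrm{w}_j\mathrm{u})\mathrm{v}=0$ because $\mathrm{w}_j\mathrm{u}=\mathrm{e}_j^\top A\mathrm{u}-\rho\mathrm{u}_j=\rho\mathrm{u}_j-\rho\mathrm{u}_j=0$; hence $\Fphi$ carries no $\rho^x$-term.

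\textbf{Expansion of $\Fphi$.} Writing $A_i^m\pi_\lambda=(\lambda\pi_\lambda+N_\lambda)^m$ and interpolating, a \emph{simple} $\lambda\in\sigma_A^i$ contributes $\lambda^xl_\lambda(x)\,\tfrac{\mathrm{w}_j\mathrm{u}^\lambda}{\lambda-1}\,w_\lambda$ to $\Fphi(x)$, where $w_\lambda=\mathrm{v}^\lambda W^{(1)}$ if $i=1$ and $w_\lambda=\mathrm{v}^\lambda Z_0=\mathrm{v}^{\lambda}_{i_0}$ if $i=2$, while an eigenvalue $\mu$ of modulus strictly below $\gamma$ contributes at most $O\big(|\mu|^x\mathrm{poly}(x)\big)$. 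Since $\gamma$ is the second largest modulus, no eigenvalue has modulus in $(\gamma,\rho)$, every modulus-$\gamma$ eigenvalue lies in $\Gamma$ and is simple by hypothesis, and $\Gamma$ is contained in exactly one of $\sigma_A^1,\sigma_A^2,\sigma_A^3$. Collecting terms, $\ProbS$-a.s.,
\[
\Fphi(x)=\sum_{\lambda\in\Gamma}\lambda^xl_\lambda(x)\,\frac{\mathrm{w}_j\mathrm{u}^\lambda}{\lambda-1}\,W_\lambda+o\big(\gamma^x\big)\qquad(x\to\infty),
\]
with $W_\lambda=\mathrm{v}^\lambda W^{(1)}$ when $\gamma>\sqrt\rho$ (so $\Gamma\subseteq\sigma_A^1$), $W_\lambda=\mathrm{v}^{\lambda}_{i_0}$ deterministic when $\gamma=\sqrt\rho$ (so $\Gamma=\sigma_A^2$), and $\Fphi\equiv0$ when $\gamma<\sqrt\rho$ (then $\sigma_A^1=\{\rho\}$ and $\sigma_A^2=\varnothing$, so every $\Fphi(m)=0$).

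\textbf{Case $\gamma>\sqrt\rho$.} Combining Proposition~\ref{prop:loc-stop-times} with $\Ft(t_n)=n+O\big(\gamma^{t_n}\mathrm{poly}(t_n)\big)$ and $(\Ft)'(x)\asymp W\rho^{\lfloor x\rfloor}$ gives $\Fi(n)-t_n\to0$ $\ProbS$-a.s.; since the $\Gamma$-part of $\Fphi$ and its (piecewise) slope are $\Theta\big(\gamma^x\big)=\Theta\big(n^{\log_\rho\gamma}\big)$ near $x\approx\log_\rho n$, replacing $\Fi(n)$ by $t_n$ costs $o\big(n^{\log_\rho\gamma}\big)$ and the error above at $t_n$ is $o\big(\gamma^{t_n}\big)=o\big(n^{\log_\rho\gamma}\big)$. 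Using $t_n=h(T_n)$, $h(m+y)=m+h(y)$, $\lfloor t_n\rfloor=\lfloor T_n\rfloor$, $\{t_n\}=h(\{T_n\})$ and $\rho^{\lfloor T_n\rfloor}=\tfrac{n(\rho-1)}{W}\rho^{-\{T_n\}}$, a direct computation yields $\lambda^{t_n}l_\lambda(t_n)=\lambda^{\lfloor T_n\rfloor}\big(1+(\lambda-1)h(\{T_n\})\big)=n^{\log_\rho\lambda}f_\lambda(T_n)\big(\tfrac{\rho-1}{W}\big)^{\log_\rho\lambda}$ with the continuous, $1$-periodic function $f_\lambda(y):=\rho^{-\{y\}\log_\rho\lambda}\big(1+(\lambda-1)h(\{y\})\big)$. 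Hence, with $X_\lambda:=\tfrac{\mathrm{w}_j\mathrm{u}^\lambda}{\lambda-1}\big(\tfrac{\rho-1}{W}\big)^{\log_\rho\lambda}\mathrm{v}^\lambda W^{(1)}$, we get $\Fphi\big(\Fi(n)\big)=\sum_{\lambda\in\Gamma}n^{\log_\rho\lambda}f_\lambda(T_n)X_\lambda+o_{\Prob}\big(n^{\log_\rho\gamma}\big)$, the sum being real since $\Gamma$ is conjugation-invariant with $f_{\bar\lambda}=\overline{f_\lambda}$, $X_{\bar\lambda}=\overline{X_\lambda}$. Because the Gaussian term of Theorem~\ref{thm:general_limit_theorem} is $O_{\Prob}\big(\sqrt n(\log_\rho n)^{\ell+\frac12}\big)=o_{\Prob}\big(n^{\log_\rho\gamma}\big)$ (as $\log_\rho\gamma>\tfrac12$), part~\emph{i)} follows.

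\textbf{Cases $\gamma\le\sqrt\rho$, and the hard step.} When $\gamma=\sqrt\rho$, simplicity of $\Gamma=\sigma_A^2$ gives $d_\lambda=0$ on $\sigma_A^2$, so $\ell=0$ in Theorem~\ref{thm:general_limit_theorem} (here simplicity of $\Gamma$ plays the role of the non-degeneracy hypothesis of Theorem~\ref{them:main_CLT}); the $\Gamma$-part of $\Fphi$ is $O\big(\sqrt\rho^{\,x}\big)=O(\sqrt n)$ near $x\approx\log_\rho n$, and the localization argument of the previous paragraph (with $\gamma^x$ replaced by $\sqrt n$) gives $\Fphi\big(\Fi(n)\big)=O(\sqrt n)=o\big(\sqrt{n\log_\rho n}\big)$; since $\Uppsi$ is bounded below, Slutsky's theorem applied to Theorem~\ref{thm:general_limit_theorem} yields part~\emph{ii)}. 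When $\gamma<\sqrt\rho$, $\Fphi\equiv0$ gives $\Fj\big(\Fi(n)\big)=\rho\mathrm{u}_j\,n$ exactly, $\ell=-\tfrac12$ with $\sigma^2(\Phi_x)>0$ by Proposition~\ref{prop:pos-variances}\emph{ii)}, and Theorem~\ref{thm:general_limit_theorem} is precisely part~\emph{iii)}. The main obstacle is the expansion of $\Fphi$ with the stated error: one must carry out the Jordan-block bookkeeping carefully, in particular checking that eigenvalues of modulus strictly between $\sqrt\rho$ and $\gamma$ (whose Jordan blocks may produce polynomial factors) still contribute $o(\gamma^x)$, and that the linear-interpolation coefficients depend on $x$ only through $\{x\}$, so that after the substitution $x=t_n=h(T_n)$ they become genuine $1$-periodic continuous functions of $T_n$.
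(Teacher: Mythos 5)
Your proposal is correct and follows essentially the same route as the paper: apply Theorem \ref{thm:general_limit_theorem}, expand $\Fj(\Fi(n))$ through the spectral decomposition in \eqref{eq:fluctuations-phi}, localize $\Fi(n)$ at $h(T_n)$, and rewrite $\lambda^{h(T_n)}l_\lambda(h(T_n))$ as $n^{\log_\rho\lambda}$ times a $1$-periodic function of $T_n$; your $f_\lambda$ and $X_\lambda$ coincide with the paper's (since $\mathrm{w}_j\mathrm{u}^\lambda=\lambda\mathrm{u}_j^\lambda-\rho\mathrm{u}_j\sum_i\mathrm{u}_i^\lambda$), and working directly with $\Phi=\Phi^j-\rho\mathrm{u}_j\Phi^{\mathsf{t}}$ via $\mathrm{w}_j\mathrm{u}=0$ is just a reorganization of the paper's step of expanding $\Fj$ and $\Ft$ separately and subtracting. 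One caveat in case \textit{ii)}: simplicity of the eigenvalues in $\Gamma$ only forces $\sigma_\ell(\Phi_x)=0$ for $\ell\ge1$; it does not by itself yield $\ell=0$, for which one still needs $\Var(\mathrm{w}_j\pi_\lambda L)\neq 0$ for some $\lambda\in\sigma_A^2$ (as guaranteed by the hypotheses of Theorem \ref{them:main_CLT} \textit{ii)}), so your parenthetical claim that simplicity plays the role of that non-degeneracy is inaccurate — although the paper's own two-line treatment of cases \textit{ii)} and \textit{iii)} leaves the same point implicit.
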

\begin{proof} We handle case \textit{i)} in detail, and the other two cases are identical so we leave the details to the interested reader.
In case $\gamma>\sqrt \rho$ by \eqref{eq:fluctuations-phi} we have
\begin{align*}
	\Fphi(n)&=	\mathrm{x}_1(\Phi)A^n_1 W^{(1)}+\mathrm{x}_2(\Phi)A^n_2Z_0
	=	\sum_{\lambda\in\Gamma\cup\{\rho\}}\lambda^n\mathrm{x}_1(\Phi)\pi_{\lambda}W^{(1)}+o(\gamma^n)\\
	&=\rho^n\mathrm{x}_1(\Phi) W\mathrm u+\sum_{\lambda\in\Gamma}\lambda^n\mathrm{x}_1(\Phi) W_\lambda \mathrm u^{\lambda}+o(\gamma^n),
\end{align*}
where we define $W_\lambda \mathrm u^\lambda=\pi_\lambda W^{(\lambda)}$ for some scalar random variable $W_\lambda$ (this can be done as $\pi_\lambda$ is a projection on the space spanned by the eigenvector $\mathrm u^\lambda$). In particular, as $\Fj$ and $\Ft$ are piecewise linear between consecutive integer arguments we conclude that
\begin{align*}
	\Fj(x)&=\rho^xl_\rho(x)\mathrm{x}_1(\Phi_{0}^j) W\mathrm u+\sum_{\lambda\in\Gamma}\lambda^xl_\lambda(x)\mathrm{x}_1(\Phi_0^j) W_\lambda \mathrm u^{\lambda}+o(\gamma^x),
\end{align*}
and 
taking into account that $\mathrm{x}_1(\Phi_{0}^j)= \frac{\lambda}{\lambda-1}\mathrm{e}_j^\top\pi_\lambda$  we finally get
\begin{align*}
	\Fj(x)&=\frac{\rho^{x+1}}{\rho-1}l_\rho(x)  W\mathrm{u}_j+\sum_{\lambda\in\Gamma}\frac{\lambda^{x+1}}{\lambda-1}l_\lambda(x) W_\lambda \mathrm{u}_j^\lambda+o(\gamma^x).
\end{align*}
By the same argument, we also obtain
\begin{align*}
	\Ft(x)=\sum_{j=1}^{J} \Fj(x-1)=\frac{\rho^x}{\rho-1}l_\rho(x)  W+\sum_{\lambda\in\Gamma}\frac{\lambda^x}{\lambda-1}l_\lambda(x) W_\lambda \Big(\sum_{i=1}^{J}\mathrm{u}_{i}^\lambda\Big) +o(\gamma^x).
\end{align*}
In particular, it holds
\begin{align*}
	\F^j(x)=\rho \mathrm{u}_j\cdot \Ft(x)+\sum_{\lambda\in\Gamma}\frac{\lambda^{x}}{\lambda-1}l_\lambda(x)\Big(\lambda\mathrm{u}_j^\lambda-\rho \mathrm u_j\Big(\sum_{i=1}^J\mathrm{u}_i^\lambda\Big)\Big) W_\lambda+o(\gamma^x).
\end{align*}
Since for $\lambda\in \Gamma$ we have
 $$\lambda^{x}=\bigg(\frac{\rho-1}{l_\rho(x) W}\Ft(x)\bigg)^{\log_\rho \lambda}(1+o(1))=\bigg(\frac{\rho-1}{l_\rho(x) W}\Ft(x)\bigg)^{\log_\rho \lambda}+o(\gamma^x) \text{ on } \Surv,$$ 
 and  
 $\Fi(n)=t_n+o(1)=h(T_n)+o(1)$, we deduce that
\begin{align*}
	\Fj(\Fi(n))&
%	=\rho \mathrm{u}_jn+\sum_{\lambda\in\Gamma}n^{\log_\rho \lambda} \frac{l_\lambda(t_n)}{l_\rho(t_n)^{\log_\rho \lambda}}\\
%	&\qquad\times\Big(\lambda\mathrm{u}^j_\lambda-\rho \mathrm u^j\Big(\sum_i\mathrm{u}_i^\lambda\Big)\Big)\Big(\frac{\rho-1}{ W}\Big)^{\log_\rho \lambda} \frac{W_\lambda}{\lambda-1}+o(\gamma^{t_n})\\
%	&
	=\rho \mathrm{u}_j\cdot n+\sum_{\lambda\in\Gamma}n^{\log_\rho \lambda} \frac{l_\lambda(h(T_n))}{l_\rho(h(T_n))^{\log_\rho \lambda}}
	\cdot\Big(\lambda\mathrm{u}_j^\lambda-\rho \mathrm u_j\Big(\sum_{i=1}^J \mathrm{u}_i^\lambda\Big)\Big)\Big(\frac{\rho-1}{ W}\Big)^{\log_\rho \lambda} \frac{W_\lambda}{\lambda-1}\\
	&+o_{\mathbb{P}}(n^{\log_\rho\gamma}),
\end{align*}
and thus \textit{i)} holds with 
$$f_\lambda(x)=\frac{l_\lambda(h(x))}{l_\rho(h(x))^{\log_\rho \lambda}}\quad \text{ and} \quad X_\lambda= \big(\lambda\mathrm{u}_j^\lambda-\rho \mathrm u_j\big(\sum_{i=1}^J\mathrm{u}_i^\lambda\big)\big)\Big(\frac{\rho-1}{ W}\Big)^{\log_\rho \lambda} \frac{W_\lambda}{\lambda-1},$$
 for every $\lambda\in \Gamma$.

In the case $\gamma=\sqrt \rho$ we have
\begin{align*}
	\F^{\Phi}(n)=\rho^n\mathrm{x}_1(\Phi) W \mathrm u +\sum_{\lambda\in\Gamma}\lambda^n\mathrm{x}_1(\Phi) \pi_\lambda Z_0+o(\gamma^n)=\rho^n\mathrm{x}_2(\Phi)W \mathrm u +o(\rho^{n/2}),
\end{align*}
and for $\gamma<\sqrt \rho$
\begin{align*}
	\F^{\Phi}(n)=\rho^n\mathrm{x}_1(\Phi) W \mathrm u+o(\rho^{n/2}).
\end{align*}
In both cases, the same approach as in the proof of part \textit{i)}, after an application of Theorem \ref{thm:general_limit_theorem}, proves finally \textit{ii)} and \textit{iii)}.
\end{proof}

\appendix
\section{Appendix A}\label{sec:apA}

\begin{figure}
\centering
\begin{tikzpicture}[scale=0.7]
\node[shape=circle,draw=black,fill=black] at (0,0){};
\draw (10,0) node {$\mathsf{U}_1$};

\node[shape=circle,draw=black,fill=black] at (-2,-1.5){};
\node[shape=circle,draw=green,fill=green] at (4,-1.5){};
\draw (10,-1.5) node {$\mathsf{U}_2$};

\node[shape=circle,draw=black,fill=black] at (-3,-3){};
\node[shape=circle,draw=green,fill=green] at (-1,-3){};
\node[shape=circle,draw=black,fill=black] at (3,-3){};
\node[shape=circle,draw=black,fill=black] at (1,-3){};
\node[shape=circle,draw=red,fill=red] at (5,-3){};
\node[shape=circle,draw=green,fill=green] at (7,-3){};
\draw (10,-3) node {$\mathsf{U}_1$};

\node[shape=circle,draw=red,fill=red] at (5,-4.5){};
\node[shape=circle,draw=red,fill=red] at (3,-4.5){};
\node[shape=circle,draw=green,fill=green] at (7,-4.5){};
\draw (10,-4.5) node {$\mathsf{U}_2$};

\begin{pgfonlayer}{bg}
\draw[color=black] (0,0) -- (-2,-1.5);
\draw[color=black] (0,0) -- (4,-1.5);
\draw[color=black] (4,-1.5) -- (3,-3);
\draw[color=black] (4,-1.5) -- (1,-3);
\draw[color=black] (4,-1.5) -- (5,-3);
\draw[color=black] (4,-1.5) -- (7,-3);
\draw[color=black] (-2,-1.5) -- (-3,-3);
\draw[color=black] (-2,-1.5) -- (-1,-3);
\draw[color=black] (5,-3) -- (5,-4.5);
\draw[color=black] (5,-3) -- (3,-4.5);
\draw[color=black] (5,-3) -- (7,-4.5);
\end{pgfonlayer}
\end{tikzpicture}
\caption{The model with two alternating urns and deterministic replacement matrix after $n=4$ draws.}
\label{fig:urn}
\end{figure}
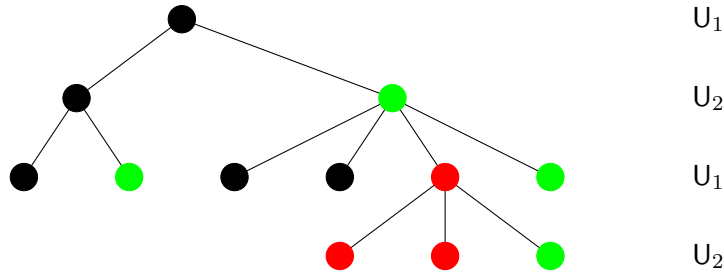

We illustrate here the model with two alternating urns on an example with $J=3$ colours (1=black, 2=red, and 3=green in the tree from Figure \ref{fig:urn}) and deterministic replacement matrix $L$ given by
\begin{equation*}
L=
\begin{pmatrix}
1 & 0 & 2\\
0 & 2 & 1\\
1 & 1 & 1
\end{pmatrix}
\end{equation*}
and $j_0=1$, that is we start with one black ball in $\mathsf{U}_1$ at time $0$, so $B(0)=(1,0,0)$. The first column $L^{(1)}$ of $L$ tells us that when picking a black ball from some urn, one black and one green ball is added to the other urn, so $\mathsf{U}_2$ is given by the nodes at level one of the tree and $B(1)=(2,0,1)$. Since after one step the content of $\mathsf{U}_1$ has been emptied, we proceed by picking up balls step-by-step from $\mathsf{U_2}$ (from the first level of the tree). After picking a green ball from $\mathsf{U}_2$, since the third column of the matrix $L$ gives the number and the colours of balls added to the other urn, with probability $1/2$ the number of added balls after $2$ steps is $B(2)=(4,1,2)$ and with probability $1/2$ is $B(2)=(3,0,2)$. Then $B(3)=(5,1,3)$ and $\mathsf{U}_2$ has been emptied, so we proceed again to $\mathsf{U}_1$ which contains now $6$ balls, and with probability $1/6$ we have $B(4)=(5,3,4)$; now we have started to build the third level of the random tree and to refill again $\mathsf{U}_2$.

\section{Appendix B}\label{sec:apB}

\paragraph{Higher moments estimates for $Z_n^{\Phi}$.} We provide here, for a general random characteristic $\Phi$, higher moments estimates of the random variable ${Z_n^{\Phi}-\mathrm{x}_1A^n_1 W^{(1)}-\mathrm{x}_2A^n_2Z_0}$, needed in the proof of Theorem \ref{thm:clt-general} for the characteristics $\Phi^{\mathsf{t}}$ and $\Phi^j$ which fulfill the assumptions of the next result.
Recall that $\Phi$ is called \textit{centered} if $\E[\Phi(k)]=\mathrm{0}\in \R^{1\times J}$ for any $k\in\Z$.  
\begin{theorem}
	\label{lem:moments growth}
	Let $p\in[1,2]$ and $\Phi:\Z\to \C^{1\times J}$ be a random characteristic. Moreover, assume that (GW1)-(GW3) hold and the second moment of $L$ is finite. 
	\vspace{-0.3cm}
	\begin{enumerate}[i)]
		\item If $\sum_{k\in \Z}\big(\E\big[ \|\Phi(k)\|^p\big]\big)^{1/p} \rho^{-k}  <\infty$, then $\E\big[|Z^\Phi_n|^{p}\big]=O(\rho ^{pn})$. 
		\item If $\sum_{k\leq n}\big(\E\big[ \|\Phi(k)\|^p\big]\big)^{1/p} \rho^{-k}  =O(n^r) $ for some $r\ge0$, then $\E\big[|Z^\Phi_n|^p\big]=O(\rho ^{pn}n^{pr})$.
	\end{enumerate}
If additionally $\Phi$ is centered  then the following holds.
	\begin{enumerate}[i)]
		\setcounter{enumi}{2}
		\item If $\sum_{k\in \Z}\big(\E\big[ \|\Phi(k)\|^{2p}\big]\big)^{1/p}\rho^{-k}<\infty$, then $\E\left[|Z^\Phi_n|^{2p}\right]=O(\rho^{np})$.
		\item If  $\sum_{k\le n}\big(\E\big[ \|\Phi(k)\|^{2p}\big]\big)^{1/p}\rho^{-k}=O(\rho^nn^r)$, then $\E\left[|Z^\Phi_n|^{2p}\right]=O(\rho^{np}n^{pr})$.
	\end{enumerate}
\end{theorem}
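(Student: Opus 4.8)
The plan is to expand $Z_n^{\Phi}$ along a Doob martingale indexed by the generations of the Galton--Watson tree, to estimate the martingale increments with moment inequalities for sums of conditionally independent variables, and to close by summing these bounds against the hypotheses on $\Phi$. I would establish \textit{i)} and \textit{ii)} first and then deduce \textit{iii)} and \textit{iv)}, which are simpler because centering of $\Phi$ removes one of the two sources of fluctuation; observe also that \textit{i)} applied to the deterministic characteristic $k\mapsto\ind{k=0}\mathbf 1$ gives $Z_n^{\Phi}=|Z_n|$, hence $\E[|Z_k|^p]=O(\rho^{kp})$ for $p\in[1,2]$, a fact used in the proof of \textit{iii)}--\textit{iv)}.

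\textbf{Martingale decomposition.} Set $\mathcal G_k=\sigma(\{L(u),U_u:|u|\le k\})$, so $Z_n^{\Phi}$ is $\mathcal G_n$-measurable, and put $Y_k=\E[Z_n^{\Phi}\mid\mathcal G_k]$ for $-1\le k\le n$ (with $\mathcal G_{-1}$ trivial); then $(Y_k)$ is a martingale with $Y_{-1}=\E Z_n^{\Phi}$ and $Y_n=Z_n^{\Phi}$. Writing $g_m(i)=\E[Z_m^{\Phi}\mid\text{root type }i]$, which obeys the one-step recursion $g_m(i)=\E[\Phi(m)]\mathrm e_i+\sum_{i'}a_{i'i}g_{m-1}(i')$, one checks from the branching property that $D_k=Y_k-Y_{k-1}=D_k^{(1)}+D_k^{(2)}$, with $D_k^{(1)}=\sum_{|u|=k}\big(\Phi_u(n-k)-\E[\Phi(n-k)]\big)\mathrm e_{\type(u)}$ (the fluctuation of the characteristic read off generation $k$) and $D_k^{(2)}=\sum_{|u|=k}\sum_{i}\big(L^{(\type(u),i)}(u)-a_{i,\type(u)}\big)g_{n-k-1}(i)$ (the branching fluctuation at generation $k$ propagated forward through the deterministic mean map). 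Conditionally on $\mathcal G_{k-1}$ the vertex set $\{|u|=k\}$ and the types are frozen while $(L(u),U_u)_{|u|=k}$ are i.i.d.\ and independent of $\mathcal G_{k-1}$, so each $D_k^{(l)}$ is a sum of $|Z_k|$ conditionally independent, conditionally centered terms, with $|Z_k|$ being $\mathcal G_{k-1}$-measurable.

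\textbf{Increment estimates and summation, $p\in[1,2]$.} Applying the conditional von Bahr--Esseen inequality gives $\E|D_k^{(1)}|^p\lesssim \E|Z_k|\,\E\|\Phi(n-k)\|^p$ and $\E|D_k^{(2)}|^p\lesssim \E|Z_k|\,\|g_{n-k-1}\|^p\,\E\|L-A\|^p$; using $\E|Z_k|\lesssim\rho^k$ (from positive regularity), the second-moment assumption on $L$, and $\|g_m\|\lesssim\rho^m$ (valid since $\sum_k\|\E\Phi(k)\|\rho^{-k}<\infty$ by Jensen, with a polynomial correction under \textit{ii)}), the substitution $j=n-k$ turns $\sum_k\E|D_k|^p$ into geometric-type sums like $\rho^{pn}\sum_j\rho^{-j}\E\|\Phi(j)\|^p\rho^{-j(p-1)}$ and $\rho^{pn}\sum_k\rho^{k(1-p)}$. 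Since the hypothesis forces $\E\|\Phi(k)\|^p\lesssim\rho^{kp}$, each such sum is dominated by its largest term and $\sum_k\E|D_k|^p=O(\rho^{pn})$, respectively $O(\rho^{pn}n^{pr})$. Together with $|\E Z_n^{\Phi}|^p\lesssim\big(\rho^n\sum_j\rho^{-j}\|\E\Phi(j)\|\big)^p$ and the martingale bound $\E|Z_n^{\Phi}-\E Z_n^{\Phi}|^p\le C_p\sum_k\E|D_k|^p$ (valid for $p\in[1,2]$, from Burkholder--Davis--Gundy together with $(\sum a_k)^{p/2}\le\sum a_k^{p/2}$), this proves \textit{i)} and \textit{ii)}; the borderline exponent $p=1$ is instead dealt with directly via $\E|Z_n^{\Phi}|\le\sum_k\E|Z_k|\,\E\|\Phi(n-k)\|$.

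\textbf{Centered case and main obstacle.} When $\Phi$ is centered, all conditional means of future contributions vanish, so $g_m\equiv 0$, hence $D_k^{(2)}=0$ and $Z_n^{\Phi}=\sum_{k=0}^nD_k$ with $D_k=\sum_{|u|=k}\Phi_u(n-k)\mathrm e_{\type(u)}$, a driftless martingale. For $q=2p\in[2,4]$ I would combine Burkholder--Davis--Gundy, $\E|Z_n^{\Phi}|^{q}\le C_q\E[(\sum_kD_k^2)^{q/2}]$, with Minkowski's inequality in $L^{q/2}=L^p$ to get $\E|Z_n^{\Phi}|^{2p}\le C\big(\sum_k(\E|D_k|^{2p})^{1/p}\big)^p$, and bound $\E|D_k|^{2p}$ by the conditional Rosenthal inequality, $\E|D_k|^{2p}\lesssim \E[|Z_k|^p]\,\big(\E\|\Phi(n-k)\|^2\big)^p+\E|Z_k|\,\E\|\Phi(n-k)\|^{2p}$; with $\E|Z_k|^p\lesssim\rho^{kp}$, $\E\|\Phi(n-k)\|^2\le(\E\|\Phi(n-k)\|^{2p})^{1/p}$, and again the ``largest term dominates'' principle, the resulting sums collapse to the stated orders, the case $q=2$ following directly from orthogonality of martingale increments, $\E|Z_n^{\Phi}|^2=\sum_k\E|D_k|^2$. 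The two delicate points are: justifying the decomposition $D_k=D_k^{(1)}+D_k^{(2)}$ together with the precise conditional independence and centering of its terms, which needs careful bookkeeping because $\Phi_u$ itself depends on $L(u)$; and the summation, where the naive geometric sums are summable only for $p$ in the open interval, and the sharp exponent $\rho^{pn}$ without a spurious power of $n$ is recovered only by exploiting that the summability/growth hypothesis pins the growth rate of $\E\|\Phi(k)\|^p$ at $\rho^{kp}$, so every geometric sum that arises is comparable to its dominant term, with the boundary exponents $p=1$ and $q=2$ treated by hand.
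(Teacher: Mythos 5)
Your proposal is correct in substance but follows a genuinely different route from the paper. The paper first splits $Z_n^\Phi=Z_n^{\Phi-\E\Phi}+Z_n^{\E\Phi}$: the mean part is estimated via $\langle\mathbf 1,Z_k\rangle\le\min_i\mathrm v_i^{-1}\rho^k\langle\mathrm v,W_k^{(1)}\rangle$ and the $\mathcal L^2$-boundedness of $W_k^{(1)}$, while the centered part is realized as the terminal value of a martingale indexed vertex by vertex (an exhaustion $G_1\subset G_2\subset\dots$ of $\U$ respecting ancestry), so that a single application of the Topchii--Vatutin inequality bounds its $p$-th moment by $\sum_{k\ge0}\E\big[\|\Phi(n-k)\|^p\big]\rho^k$; parts \textit{iii)}--\textit{iv)} then follow from Burkholder--Davis--Gundy combined with applying \textit{i)}--\textit{ii)} at exponent $p$ to the squared characteristic $\Psi_u(k)\mathrm e_i=|\Phi_u(k)\mathrm e_i|^2$. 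You instead run a Doob decomposition along generations, which forces you to carry the extra increment $D_k^{(2)}$ (offspring fluctuations weighted by the mean profile $g_{n-k-1}$), to control $\|g_m\|$, to sum the geometric series $\sum_k\rho^{k(1-p)}$ (hence your separate treatment of $p=1$), and, for the $2p$-th moments, to invoke conditional Rosenthal together with $\E[|Z_k|^p]=O(\rho^{kp})$, which you correctly bootstrap from part \textit{i)}. Both aggregation schemes (your BDG--Minkowski at generation level, the paper's Topchii--Vatutin/BDG at vertex level) are valid, and your summations do close under the hypotheses (your displayed rewritings are upper bounds rather than identities, and ``dominated by the largest term'' should be replaced by the bound $\sum_j a_j^p\le(\sum_j a_j)^p$ after pulling out $\rho^{n(p-1)}$, but the estimates are sound). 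What the paper's split buys is that the mean contribution disappears from the martingale, so $D_k^{(2)}$, $g_m$ and the $p=1$ boundary never appear; what your route buys is avoiding the squared-characteristic device.

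One slip to repair: since $\Phi$ is defined on all of $\Z$, vertices with $|u|>n$ contribute to $Z_n^\Phi$ through $\Phi_u(n-|u|)$ with negative argument, so it is not true that $Y_n=Z_n^\Phi$, and you cannot stop the Doob filtration at $k=n$. You must let $k$ run over all of $\N_0$, identify $Z_n^\Phi$ with the $L^p$-limit of $Y_k$ (harmless, since your own bounds give $\sum_{k\ge0}\E|D_k|^p<\infty$), and allow $g_m$ with $m<0$. A further point worth noting: under the hypothesis of \textit{iv)} as printed, with the factor $\rho^n$, your sums (and the paper's) yield $O(\rho^{2np}n^{pr})$ rather than the stated $O(\rho^{np}n^{pr})$; both arguments deliver the stated conclusion under $\sum_{k\le n}\big(\E[\|\Phi(k)\|^{2p}]\big)^{1/p}\rho^{-k}=O(n^r)$, which is also how \textit{iv)} is invoked in Corollary \ref{cor:moment estimate}.
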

\begin{proof}
By decomposing 
	\begin{align*}
		Z^\Phi_n= Z^{\Phi-\E\Phi}_n + Z^{\E\Phi}_n
	\end{align*}
	it is enough to  get the desired bound on each term separately.
	If  $\mathrm{v}$ is the right eigenvector of $A$ to eigenvalue $\rho>1$ then 
	\begin{align*}
		\big<{\bf 1}, Z_n\big>\le \min_i \mathrm{v}_i^{-1}\big<\mathrm{v}, Z_n\big>
		=\rho^{n}\min_i \mathrm{v}_i^{-1}\big\langle\mathrm{v},W_n^{(1)}\big\rangle,
	\end{align*}
	and further
	\begin{align*}
		\big|Z_n^{\E\Phi}\big|& \le  \sum_{k\ge 0} \|\E\Phi(n-k)\| \|Z_k\|
		\le  \sum_{k\ge 0} \|\E\Phi(n-k)\| \big<{\bf 1}, Z_k\big>\\
		&\le \min_i\mathrm{v}_i^{-1}\sum_{k\ge 0} \|\E\Phi(n-k)\|\big<\mathrm{v}, Z_k\big>
		\le \min_i\mathrm{v}_i^{-1}\sum_{k\ge 0} \|\E\Phi(n-k)\|\big<\mathrm{v}, \rho^k W_k^{(1)}\big>\\
		&=\rho^n \times \min_i\mathrm{v}_i^{-1}\sum_{k\le n} \|\E\Phi(k)\| \rho^{-k}\big<\mathrm{v}, W_{n-k}^{(1)}\big>.
	\end{align*}
	In view of Lemma 2.2 in  \cite{kolesko-sava-huss-char} the random variables $\big<\mathrm{v}, W_k^{(1)}\big>$ are bounded in $\L^2$ and therefore by Minkowski inequality the $\L^2$ norm of $Z_n^{\E\Phi}$ is bounded by a multiple of $\rho^n\sum_{k\le n} \|\E\Phi(k)\| \rho^{-k}$. As a result we get 
		\begin{align*}
		\E\Big[\big|Z_n^{\E\Phi}\big|^2\Big]=O(\rho^{2n}),
	\end{align*}
	in case \textit{ i)} and 
		\begin{align*}
		\E\Big[\big|Z_n^{\E\Phi}\big|^2\Big]=O(n^{2r}\rho^{2n}),
	\end{align*} in case  \textit{ii)}. By Jensen inequality  the $p$th moment of $Z_n^{\E\Phi}$, for $p\in[1,2]$ is of order $O(\rho^{pn})$  in case \textit{ i)} and of order $O(n^{pr}\rho^{pn})$ in case \textit{ ii)} respectively.

Now we focus on the case with a centered characteristic $\Psi:=\Phi-\E\Phi$.	
	We consider an increasing sequence $(G_n)_{n\in\N}$ of subsets of $\U$ with the following property:
		$\cup_{n\geq 1}G_n=\U$; for any $n\in\N$, $G_n=n$; if $u\in G_n$, then for any $v\leq u$, $v\in G_n$.
		Such a sequence can be constructed by using the diagonal method. If $\mathcal{G}_n=\sigma\left(\{L(u):\ u\in G_n\}\right)$, then one can see that  $\sum_{u\in G_k}\Psi_u(n-|u|)\mathrm{e}_{\type{(u)}}$ is a $\mathcal{G}_k$-martingale. Indeed, for any $u\in G_k$ both $\type(u)$ and $\Psi_u$ are $\mathcal{G}_k$-measurable and the fact that $\Psi$ is centered gives the martingale property.
	By the Topchii-Vatutin inequality \cite[Theorem 2]{Topchii+Vatutin:1997} applied to $\sum_{u\in G_n}\Psi_u(n-|u|)\mathrm{e}_{\type{(u)}}$ we get
	\begin{align*}
	\E\Big[\Big|\sum_{u\in\T}&\Psi_u(n-|u|)\mathrm{e}_{\type(u)}\Big|^p\Big]\le C_p\E\Big[\sum_{u\in\T}\big|\Psi_u(n-|u|)\mathrm{e}_{\type(u)}\big|^p\Big]\\
	&\le C_p \sum_{k\ge 0}\E\big[\|\Psi(n-k)\|^p\big]  \rho^{k} 
	\le 2^p C_p \sum_{k\ge 0}  \E\big[\|\Phi(n-k)\|^p \big]\rho^k\\
	&\le 2^p C_p \Big(\sum_{k\ge 0}  \big(\E\big[\|\Phi(n-k)\|^p \big]\big)^{1/p}\rho^{k/p}\Big)^p
	\le 2^p C_p \Big(\sum_{k\ge 0}  \big(\E\big[\|\Phi(n-k)\|^p \big]\big)^{1/p}\rho^k\Big)^p\\
	&\le  2^p C_p\rho^{pn} \Big(\sum_{k\le n}  \big(\E\big[\|\Phi(k)\|^p \big]\big)^{1/p}\rho^{-k}\Big)^p
	\end{align*}
	and the latter expression is of the order $O(\rho^{pn})$ in case \textit{i)} or $O(n^{pr}\rho^{pn})$ in case \textit{ii)}. This finishes the proof of the first two statements  \textit{i)} and \textit{ii)}.
	
Now we turn to the proof of \textit{iii)} and \textit{iv)}.   Observe that Burkholder-Davis-Gundy inequality \cite[Theorem 1.1]{burkholder1972integral} yields
	\begin{align*}
		\E\Big[\big|Z_n^{\Phi}\big|^{2p}\Big]&=
		\E\Big[\Big|\sum_u \Phi_u(n-|u|)\mathrm{e}_{\type(u)}\Big|^{2p}\Big]\\
		&\le C_p\E\Big[\Big|\sum_u \big|\Phi_u(n-|u|)\mathrm{e}_{\type(u)}\big|^2\Big|^{p}\Big]
		=C_p\E\Big[\big|Z_n^{\Psi}\big|^{p}\Big],
	\end{align*}
	where $\Psi$ is a new characteristic defined by $\Psi_u(k)\mathrm{e}_i\defeq\big|\Phi_u(k)\mathrm{e}_{i}\big|^2$, i.e.~the components of $\Psi(k)$ are square of components of $\Phi$. Clearly, $\|\Psi(k)\|\le \|\Phi(k)\|^2$ and as a consequence  \textit{iii)} and \textit{iv)} follow from parts \textit{i)} and \textit{ii)} respectively applied to the characteristic $\Psi$.

\end{proof}
\begin{corollary}
	\label{cor:moment estimate}
Let $\Phi:\Z\to \C^{1\times J}$ be a random characteristic such that 
\begin{equation}
	\sum_{k\in \Z}\big\|\E[\Phi(k)]\big\|(\rho^{-k}+\vartheta^{-k})<\infty,
\end{equation}
for some $\vartheta<\sqrt{\rho}$, and
\begin{equation}
	\sum_{k\in \Z}\|\Var[\Phi(k)]\|\rho^{-k}<\infty.
\end{equation}
Suppose that $\E\left[\|L\|^{2p}\right]<\infty$ for some $p\in(1,2)$.
Then, for $F_n^\Phi$ defined by \eqref{eq:fluctuations-phi} it holds
	$$\E\Big[\big|Z_n^{\Phi}-F^\Phi_n\big|^{2p}\Big]=
	\begin{cases}
		O\big(\rho^{np}\big)&\text{if for all $0\leq \ell \leq J-1$, }\sigma_\ell=0,\\
		O\big(n^{(2\ell+1)p}\rho^{np}\big)\quad&\text{if $0\leq \ell\leq J-1$ is maximal with }\sigma_\ell>0.
	\end{cases}$$
\end{corollary}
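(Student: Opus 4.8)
The plan is to extract the bound from the spectral decomposition underlying the stable central limit theorem \eqref{eq:clt} of \cite{kolesko-sava-huss-char}: the ``easy'' contributions are handled by Theorem~\ref{lem:moments growth}, and the one genuinely dominant contribution by a direct martingale Burkholder--Davis--Gundy estimate.

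First I would unroll $Z_m=A^mZ_0+\sum_{k<m}A^{m-1-k}\Delta_k$, where $\Delta_k=\sum_{|u|=k}(L(u)-A)\mathrm{e}_{\type(u)}$ is the generation-$k$ innovation, and split $Z_n^\Phi-F_n^\Phi$ according to the spectral projections $\pi^{(1)},\pi^{(2)},\pi^{(3)}$ of $A$. The role of $F_n^\Phi$ in \eqref{eq:fluctuations-phi} is precisely to cancel the deterministic $\rho^n$-scale term, the resummation of the $\pi^{(1)}$-innovations into $W^{(1)}$, and the deterministic $\pi^{(2)}$-term $\mathrm{x}_2(\Phi)A_2^nZ_0$. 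After these cancellations the $\pi^{(1)}$- and $\pi^{(3)}$-contributions become finite sums $\sum_k\mathrm{g}_{n,k}\Delta_k$ whose coefficients satisfy $\|\mathrm{g}_{n,k}\|\le C\rho^{(n-k)/2}$ uniformly in $n$: here the hypotheses $\sum_k\|\E[\Phi(k)]\|(\rho^{-k}+\vartheta^{-k})<\infty$ with $\vartheta<\sqrt\rho$ and $\sum_k\|\Var[\Phi(k)]\|\rho^{-k}<\infty$ are exactly what make the $\pi^{(1)}$-tail geometric relative to $\min_{\lambda\in\sigma_A^1}|\lambda|>\sqrt\rho$ and the $\pi^{(3)}$-tail geometric relative to $\max_{\lambda\in\sigma_A^3}|\lambda|<\sqrt\rho$. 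Each such contribution, together with the contribution coming from the randomness of $\Phi$ itself (the centered characteristic $\Phi-\E[\Phi]$), is a centered characteristic satisfying the hypotheses of parts \textit{iii)} and \textit{iv)} of Theorem~\ref{lem:moments growth}, so its $2p$-th moment is $O(\rho^{np})$: this is subdominant when some $\sigma_\ell>0$, and matching when all $\sigma_\ell=0$.

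The remaining, dominant piece is the $\pi^{(2)}$-martingale $M_n=\sum_{m=0}^{n-1}S_m$ with $S_m=\sum_{|u|=m}\mathrm{x}_2(\Phi)\pi^{(2)}A^{n-m-1}(L(u)-A)\mathrm{e}_{\type(u)}$, a martingale-difference sum for the generation filtration $\mathcal F_m$. The key input is that maximality of $\ell$ (i.e.\ $\sigma_{\ell'}(\Phi)=0$ for $\ell'>\ell$ in the sense of \eqref{eq:sigma_l}) forces $\mathrm{x}_2(\Phi)\pi_\lambda N_\lambda^{\ell'}(L-A)=\mathbf 0$ almost surely for every $\lambda\in\sigma_A^2$ and every $\ell'>\ell$, and for every $\ell'\ge0$ when all $\sigma_{\ell'}$ vanish, in which case $M_n\equiv0$. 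Expanding $\mathrm{x}_2(\Phi)\pi^{(2)}A^{j}=\sum_{\lambda\in\sigma_A^2}\sum_{\ell'}\binom{j}{\ell'}\lambda^{j-\ell'}\mathrm{x}_2(\Phi)\pi_\lambda N_\lambda^{\ell'}$ and discarding the almost surely vanishing terms gives the pointwise bound $\big|\mathrm{x}_2(\Phi)\pi^{(2)}A^{j}(L-A)\mathrm{e}_i\big|\le C(1+j)^{\ell}\rho^{j/2}\|L\|$ almost surely, for all $i$. A conditional Burkholder--Davis--Gundy inequality together with the elementary estimate $\big(\sum_ua_u\big)^p\le(\#\{u\})^{p-1}\sum_ua_u^p$ then yields $\E\big[|S_m|^{2p}\mid\mathcal F_m\big]\le C\,|Z_m|^{p}(1+n-m)^{2\ell p}\rho^{(n-m)p}\,\E\|L\|^{2p}$; taking expectations and using $\E|Z_m|^{p}=O(\rho^{mp})$ (Theorem~\ref{lem:moments growth}\,\textit{i)} applied to the characteristic $\ind{k=0}\mathbf 1$) gives $\E|S_m|^{2p}=O\big((1+n-m)^{2\ell p}\rho^{np}\big)$. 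An outer Burkholder--Davis--Gundy inequality across the $n$ generations then produces $\E|M_n|^{2p}\le Cn^{p-1}\sum_{m<n}\E|S_m|^{2p}=O\big(n^{p-1}\rho^{np}\sum_{j\le n}j^{2\ell p}\big)=O\big(n^{(2\ell+1)p}\rho^{np}\big)$, which together with the previous paragraph closes the proof; in the degenerate case $M_n=0$ and the bound is $O(\rho^{np})$.

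I expect the main obstacle to lie in the second step: verifying that $F_n^\Phi$ cancels precisely the non-$\pi^{(2)}$-dominant contributions, and that the resulting $\pi^{(1)}$- and $\pi^{(3)}$-tails, together with the part of $Z_n^\Phi$ due to the randomness of $\Phi$, genuinely satisfy the summability hypotheses of Theorem~\ref{lem:moments growth} under the stated decay conditions. This amounts to re-running carefully the relevant part of the proof of \cite[Theorem~3.5]{kolesko-sava-huss-char}, the delicate point being the interplay between the spectral gap, the threshold $\vartheta<\sqrt\rho$, and the non-decaying characteristics $\Phi^{\mathsf t}$ and $\Phi^j$ occurring in the intended applications; by contrast the dominant-term estimate for $M_n$ is a routine martingale moment computation once the almost sure bound on the propagator is in hand.
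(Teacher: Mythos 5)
Your plan follows the same broad architecture as the paper: split $Z_n^\Phi-F_n^\Phi$ into a dominant $\pi^{(2)}$-piece and subdominant pieces, and control moments via Theorem \ref{lem:moments growth}. The paper does not re-derive the splitting, however; it imports it from equation (18) of \cite{kolesko-sava-huss-char}, in the form $Z_n^\Phi-F_n^\Phi=Z_n^{\Psi_1}+Z_n^{\Psi_2}+o(\rho^{n/2})$ with $\Psi_1(k)=\Psi_1'(k)(L-A)$ for a deterministic $\Psi_1'$ and $\Psi_2(k)=\mathrm{x}_2\pi^{(2)}A^{k-1}(L-A)\ind{k>0}$, and then applies Theorem \ref{lem:moments growth}\,\textit{iii)} to $\Psi_1$ and \textit{iv)} to $\Psi_2$. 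Your direct two-level Burkholder--Davis--Gundy treatment of the $\pi^{(2)}$-martingale is a legitimate and essentially equivalent substitute for the application of part \textit{iv)} to $\Psi_2$: the maximality of $\ell$ does force $\mathrm{x}_2(\Phi)\pi_\lambda N_\lambda^{\ell'}(L-A)=0$ a.s.\ for $\ell'>\ell$, your propagator bound and the resulting $O(n^{(2\ell+1)p}\rho^{np})$ estimate are correct, and the degenerate case $M_n\equiv0$ is handled correctly.

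The genuine gap is in the treatment of the remaining pieces. Theorem \ref{lem:moments growth}\,\textit{iii)}/\textit{iv)} require summability of $\big(\E\|\cdot\|^{2p}\big)^{1/p}\rho^{-k}$, i.e.\ a $2p$-th moment condition on the centered characteristic, whereas the corollary's hypotheses only control $\sum_k\|\Var[\Phi(k)]\|\rho^{-k}$, a second-moment condition. Your assertion that the centered characteristic $\Phi-\E[\Phi]$ (and the resummed $\pi^{(1)}$/$\pi^{(3)}$ pieces) ``satisfies the hypotheses of parts \textit{iii)} and \textit{iv)}'' therefore does not follow from the stated assumptions: a characteristic with finite variances but infinite $2p$-th moments (say $\Phi(k)=\xi\,\ind{k=0}\mathrm{1}$ with $\E\xi=0$, $\Var\xi<\infty$, $\E|\xi|^{2p}=\infty$, $\xi$ independent of $L$) violates the required summability outright. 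The whole point of the paper's argument is that, in the decomposition inherited from \cite{kolesko-sava-huss-char}, every genuinely random ingredient factors as a deterministic row vector times $(L-A)$, and Lemma \ref{lem:hypercontractivity} (which uses exactly the assumption $\E[\|L\|^{2p}]<\infty$) then upgrades second moments to $2p$-th moments, $\E\|\Psi_1(k)\|^{2p}\le C\big(\E\|\Psi_1(k)\|^2\big)^{p}$, with a constant independent of the deterministic factor --- this is indispensable because $L-A$ may annihilate some directions, so one cannot simply pull out $\|\Psi_1'(k)\|$. Your proposal never invokes this (or any substitute) moment upgrade, and it also leaves the derivation of the cancellation structure of $F_n^\Phi$ --- which the paper simply cites --- as an acknowledged ``obstacle'' rather than carrying it out. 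As written, the non-$\pi^{(2)}$ half of the argument is therefore incomplete, and it is precisely the half where the product structure and Lemma \ref{lem:hypercontractivity} are needed.
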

\begin{proof}
The decomposition from equation (18) in \cite{kolesko-sava-huss-char} yields
	\begin{align}\label{eq:decomposition}
		Z_n^{\Phi}-F^\Phi_n=Z_n^{\Psi_1}+Z_n^{\Psi_2}+o(\rho^{n/2}),
	\end{align}
	where $\Psi_1$ and $\Psi_2$ are two random centered characteristics such that
	\vspace{-0.3cm}
	\begin{itemize}\setlength\itemsep{-0.2em}
		\item for any  $k\in \Z$ we can decompose $\Psi_1$ as $\Psi_1(k)=\Psi_1'(k)(L-A)$ for some deterministic  characteristic $\Psi_1'(k)$ (see the paragraph after equation (19) in \cite{kolesko-sava-huss-char}) and  
		$\sum_{k\in \Z}\E\left[\|\Psi_1(k)\|^2\right]\rho^{-k}<\infty$,
		\item $Z_n^{\Psi_2}=\mathrm{x}_2\pi^{(2)} (Z_n-A^n_2Z_0)$, that is  $\Psi_2(k)=\mathrm{x}_2\pi^{(2)}A^{k-1}(L-A)\ind{k>0}$, for some row vector $\mathrm{x}_2$.
	\end{itemize}
	Moreover, the last term $o(\rho^{n/2})$ in \eqref{eq:decomposition} is deterministic.
By Minkowski's inequality, we have
\begin{equation}\label{eq:mink}
\E\Big[\big|Z_n^{\Phi}-F^\Phi_n\big|^{2p}\Big]^{1/2p}\leq \E\Big[\big(Z_n^{\Psi_1}\big)^{2p}\Big]^{1/2p}+ \E\Big[\big(Z_n^{\Psi_2}\big)^{2p}\Big]^{1/2p}+o(\rho^{n/2}),
\end{equation}
and we estimate each of the two terms on the right hand side separately.	
In view of Lemma \ref{lem:hypercontractivity}, there is a constant $C>0$ such that 
	$$\E\left[\|\Psi_1(k)\|^{2p}\right]\le C\Big(\E\left[\|\Psi_1(k)\|^2\right]\Big)^{p},$$
	and, in particular,
	\begin{align*}
	\sum_{k \in\Z}\rho^{-k}\big(\E\left[\|\Psi_1(k)\|^{2p}\right]\big)^{1/p}
	\le C^{1/p}\sum_{k\in\Z}\rho^{-k}\E\left[\|\Psi_1(k)\|^{2}\right] <\infty.
%	\kk{\le C\sum_{k\le n}\Big(\sum_{k\in\Z}\rho^{-k}\E\left[\|\Psi_1(k)\|^{2}\right]\Big)^p\rho^{k(p-1)}}
%	\kk{=\frac{C\rho^{n(p-1)}}{\rho^{p-1}}}  \Big(\sum_{k\in \Z}\rho^{-k}\E\left[\|\Psi_1(k)\|^{2}\right]\Big)^p.
	\end{align*}
Theorem \ref{lem:moments growth} {\it iii)} applied to $\Psi_1$ yields $\E\big[\big(Z^{\Psi_1}_{n}\big)^{2p}\big]=O(\rho^{np})$.
	In order to deal with the second term $\E\left[\big(Z^{\Psi_2}_{n}\big)^{2p}\right]$ on the left hand side of \eqref{eq:mink}, note that by the definition of $\ell$ we may write
	\begin{align*}
	\Psi_2(k)&=\ind{k>0}\mathrm{x}_2\pi^{(2)}  \sum_{j=0}^{J-1}{k-1 \choose j}D^{k-1-j}N^{j}(L-A)\\
	&=\ind{k>0}\mathrm{x}_2\pi^{(2)}  \sum_{j=0}^{(J-1)\wedge \ell}{k-1 \choose j}D^{k-1-j}N^{j}(L-A).
	\end{align*}
 In particular, as $k$ goes to infinity we have
	$$\big(\E\left[\|\Psi_2(k)\|^{2p}\right]\big)^{1/p}=O\big(\rho^k k^{2\ell}\big),\quad \text{so}\quad 
	\sum_{k=0}^n\rho^{-k}\big(\E\left[\|\Psi_2(k)\|^{2p}\right]\big)^{1/p}=O\big(n^{2\ell+1}\big),$$
	and by 	Theorem \ref{lem:moments growth} {\it iv)} applied to $\Psi_2$, we obtain $\E\big[\big(Z^{\Psi_2}_{n}\big)^{2p}\big]=O(n^{(2\ell+1)p}\rho^{np})$, which together with \eqref{eq:mink} proves the desired.
%	\textcolor{blue}{Why do we write here (2l+1) and not $2l$?}
\end{proof}

For any function $f:\R\to\R$ let $$\omega(f,k,t)\defeq\sup\{|f(x)-f(y)|:x,y\in[-k,k],\ |x-y|\le t\}$$
be the modulus of continuity of $f$ on the interval $[-k,k]$.
\begin{lemma}
	\label{lem:auxiliary lemma for cadlag processes}
	For  a stochastic process $X$ taking values in the Skorokhod space  $\mathcal D(\R)$, and for  $n\in \N$ let $X_n(t)= X(t+n)$. Further, suppose that $Y=(Y(t))_{t\in\R}$ is a stationary process with almost surely continuous trajectories. Then we have the following.
	\vspace{-0.3cm}
	\begin{enumerate}[i)]\setlength\itemsep{-0.2em}
		\item If $X_n\distto Y$, $N_n$ is a sequence of natural numbers diverging to infinity, and $\delta_n\searrow0$, then there is a sequence $k_n\nearrow\infty$ such that 
		\vspace{-0.25cm}
		\begin{align}
		\label{eq:vanishing modulus of continuity}
		\omega(X_{N_{n}},k_n,\delta_{n})\Probto0,\quad\text{as }n\to\infty.
		\end{align}
		\item If for some real valued random variable $S$  independent of $Y$, it holds
		$(S,X_n)\distto(S,Y)$ as $n\to\infty$, then for any sequence $a_n$ that diverges to infinity we have
		\begin{align}
		\label{eq:converges at random time}
		X(a_n+S)\distto Y(0),\quad \text{as } n\to\infty .
		\end{align}
	\end{enumerate}
\end{lemma}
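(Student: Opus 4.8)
The plan is to reduce both claims to the elementary fact that $\mathbf J_1$-convergence in $\mathcal D(\R)$ to a \emph{continuous} limit is automatically locally uniform, accessed through Skorokhod's representation theorem (legitimate since $\mathcal D(\R)$ with the $\mathbf J_1$ topology is separable and completely metrizable).

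\textbf{Part i).} First I would fix $k$ and, by Skorokhod representation, realize $X_{N_n}\to Y$ almost surely in $\mathcal D(\R)$; since $Y$ has continuous trajectories this forces $\sup_{|t|\le k}|X_{N_n}(t)-Y(t)|\to 0$ almost surely, and then the triangle inequality gives
$$\omega(X_{N_n},k,\delta)\le\omega(Y,k,\delta)+2\sup_{|t|\le k}|X_{N_n}(t)-Y(t)|\qquad\text{for all }\delta,n.$$
Because $\delta_n\searrow 0$ and $Y$ is uniformly continuous on $[-k,k]$, taking $\delta=\delta_n$ makes the right-hand side tend to $0$ almost surely, so $\omega(X_{N_n},k,\delta_n)\Probto 0$ for every fixed $k$ --- a statement about the original law of $X$. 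A diagonal extraction then finishes: for each $m\in\N$ choose $n_m$, increasing in $m$, with $\Prob\big(\omega(X_{N_n},m,\delta_n)>1/m\big)<1/m$ whenever $n\ge n_m$, and set $k_n=\max\{m:\ n_m\le n\}$ (and $k_n=1$ for $n<n_1$). Then $k_n\nearrow\infty$ and, for $n$ large enough, $\Prob\big(\omega(X_{N_n},k_n,\delta_n)>\varepsilon\big)\le 1/k_n\to 0$, which is \eqref{eq:vanishing modulus of continuity}.

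\textbf{Part ii).} Next I would write $p_n=\lfloor a_n\rfloor\to\infty$ and $q_n=\{a_n\}\in[0,1)$, so that $X(a_n+S)=X_{p_n}(q_n+S)$. Given any subsequence, compactness of $[0,1]$ lets me pass to a further subsequence along which $q_n\to q_\ast\in[0,1]$; appending this deterministically convergent coordinate to $(S,X_{p_n})\distto(S,Y)$ yields $(q_n,S,X_{p_n})\distto(q_\ast,S,Y)$ in $[0,1]\times\R\times\mathcal D(\R)$. Applying Skorokhod representation again and using local uniformity of $\mathbf J_1$-convergence to the continuous path $Y$, the evaluations converge almost surely in the coupling, $X_{p_n}(q_n+S)\to Y(q_\ast+S)$, hence in distribution along the subsequence. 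Finally, conditioning on $S$ and invoking the independence of $S$ and $Y$ together with the stationarity of $Y$ (so that $Y(q_\ast+s)\eqdist Y(0)$ for each fixed $s$) gives $Y(q_\ast+S)\eqdist Y(0)$. Since every subsequence of $\big(X(a_n+S)\big)_n$ thus has a further subsequence converging in distribution to $Y(0)$, we get $X(a_n+S)\distto Y(0)$, i.e.\ \eqref{eq:converges at random time}.

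I expect the only genuinely delicate points to be bookkeeping ones: in part i), simultaneously controlling the shrinking increment $\delta_n$ against the growing window $[-k_n,k_n]$, which is exactly what the diagonal choice of $k_n$ is for; and in part ii), the legitimacy of evaluating the Skorokhod-valued limit $Y$ at the \emph{random} time $q_\ast+S$ --- this is precisely where the almost sure continuity of the trajectories of $Y$ is used, since it guarantees that $Y$ almost surely has no jump at $q_\ast+S$.
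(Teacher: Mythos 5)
Your proof is correct, and it is worth separating the two parts when comparing it with the paper's. For part i) you are essentially doing the paper's argument with a different technical device: where you realize $X_{N_n}\to Y$ almost surely via Skorokhod representation and use that $\mathbf{J}_1$-convergence to a continuous limit is locally uniform, the paper obtains the same intermediate statement $\omega(X_{N_n},k,\delta_n)\Probto 0$ for each fixed $k$ by observing that $f\mapsto\omega(f,k,\delta)$ is continuous at every continuous $f$ and applying the continuous mapping theorem to $\E[\omega(\cdot,k,\delta)\wedge 1]$; the diagonal choice of $k_n$ is then the same in both proofs (just make sure your $n_m$ are chosen strictly increasing so that $k_n=\max\{m:\ n_m\le n\}$ is finite, exactly as in the paper's $n(k)>n(k-1)$, and note that the conclusion transfers from the coupling to the original process because convergence in probability to the constant $0$ depends only on the marginal laws). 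The genuine difference is in part ii): the paper first invokes part i) with $\delta_n=2|\{a_n\}-a|$ and the truncation $|S|\le k_n$ to show $X(N_n+\{a_n\}+S)-X(N_n+a+S)\Probto 0$, and only then applies the continuous mapping theorem to the evaluation map $(s,x)\mapsto x(a+s)$, which is continuous at points of $\R\times\mathcal C(\R)$, finishing with Slutsky and the identity $Y(a+S)\eqdist Y(0)$; you instead bypass part i) entirely by appending the deterministically convergent coordinate $q_n\to q_*$ to the joint convergence $(S,X_{p_n})\distto(S,Y)$, passing to an almost surely convergent coupling, and evaluating directly at the moving random time $q_n+S$, where local uniform convergence plus continuity of the limit path does all the work. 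Both routes rest on the same continuity fact for evaluations at continuous paths and the same independence-plus-stationarity identification of the limit law; yours is slightly more streamlined in that part ii) no longer depends on part i), while the paper's version stays entirely at the level of weak convergence and the continuous mapping theorem without appealing to almost sure representations.
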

\begin{proof} \textit{i)}:
	Fix $\delta>0$ and $k\in \N$. Then the mapping $\mathcal D(\R)\ni f\mapsto \omega(f,k,\delta)\in\R$ is continuous at any  $f\in\mathcal C(\R)$.
\begin{comment}	
	 This can be immediately seen by
fixing $k\in\N$ and $\delta>0$, and letting $f,g\in \mathcal{C}(\R)$ be two continuous functions over $\R$. Then for any $x,y\in[-k,k]$ with $|x-y|\leq \delta$ we have
$$|f(x)-f(y)|\leq |f(x)-g(x)|+|g(x)-g(y)|+|g(y)-f(y)|\leq 2||f-g||_{\infty}+\omega (g,k,\delta)$$
since on the compact interval $[-k,k]$, $||f-g||_{\infty}$ is well-defined, which implies that
$$\omega(f,k,\delta)\leq \omega (g,k,\delta)+2||f-g||_{\infty}.$$
So, from the previous two equations we deduce that for any fixed $k\in \N$ and $\delta>0$
$$|\omega(f,k,\delta)-\omega (g,k,\delta)|\leq 2 ||f-g||_{\infty} $$
which proves that the mapping  $\mathcal C(\R)\ni f\mapsto \omega(f,k,\delta)\in\R$ is Lipschitz continuous, and in particular, continuous.
\end{comment}
The continuous mapping theorem yields
	$$\lim_{n\to\infty} \E[\omega(X_{N_n},k,\delta)\wedge 1]=\E[\omega(Y,k,\delta)\wedge 1].$$ 
	Hence, for any $\varepsilon >0$ we have
	$$\limsup_{n\to\infty} \E[\omega(X_{N_n},k,\delta_{n})\wedge 1]\le\E[\omega(Y,k,\varepsilon)\wedge 1].$$
	Since $\varepsilon$ is arbitrary, we can take the limit as $\varepsilon$ goes to 0, and from the continuity of $Y$ we conclude that
	$$\E[\omega(X_{N_n},k,\delta_{n})\wedge 1]\to0.$$ In particular, there is $n(k)>n(k-1)$  such that for all $n\ge n(k)$ it holds 
	$$\E[\omega(X_{N_{n}},k,\delta_n)\wedge 1]\le1/k.$$
	
	Now for $n\ge n(1)$ we set $k_n=k$ whenever $n(k)\le n<n(k+1)$. Clearly $k_n\nearrow\infty$ and  it holds
	$$\E[\omega(X_{N_{n}},k_n,\delta_n)\wedge 1]\le1/k_n,$$	
	which implies \eqref{eq:vanishing modulus of continuity} and this proves the first part of the claim.
	
\textit{ii)}: The convergence in \eqref{eq:converges at random time} holds if for any subsequence $n_k$ we can choose a further subsequence $n_{k_l}$ along which the convergence holds. Since we may replace the sequence $a_n$ by a subsequence $a_{n_k}$ it suffices if we show the convergence \eqref{eq:converges at random time} along some subsequence. Thus, without loss of generality, we may assume that $\{a_n\}\to a$ for some $a\in[0,1]$.
	For $N_n=\lfloor a_n\rfloor$, $\delta_n=2|\{a_n\}-a|$ from part \textit{i)} we infer the existence of a sequence $k_n\nearrow\infty$ such that \eqref{eq:vanishing modulus of continuity} holds.
	
	For $Z_n= X(N_n+\{a_n\}+S)-X(N_n+a+S)$, once again  by  part\textit{ i)} of the proof, we have
\begin{align*}
|Z_n|\le|Z_n|\ind{|S|\le k_n}+|Z_n|\ind{|S|> k_n}
\le \omega(X_{N_{n}},k_n,\delta_{n})+|Z_n|\ind{|S|> k_n}\Probto0,
\end{align*}
and thus, by Slutsky's theorem, it suffices to prove 
\begin{align*}
X_{N_n}(a+S)=X(N_n+a+S)\distto Y(0).
\end{align*}
Next, observe that the mapping
$\R\times\mathcal D(\R)\ni(s,x)\mapsto x(a+s)\in \R$
	is continuous at any point $(s,x)\in \R\times \mathcal C(\R)$. 
	%This can be proven as in the first part, by showing that the mapping is Lipschitz continuous.
% \kk{Do we need to prove it?}.	
%	\textcolor{blue}{I don't think that we have to prove it: it is an $\epsilon-\delta$ argument. The open sets in $\mathbb{R}\times \mathcal{C}(\mathbb{R})$ are of the form: for $(x,f)\in \mathbb{R}\times \mathcal{C}(\mathbb{R}) $, and $\delta,\epsilon>0$, a $(\delta,\epsilon)$-ball around $(x,f)$ is of the form
%\begin{align*}
%&B_{(\delta,\epsilon)}(x,f)=\\
%&\left\{(y,g)\in \mathbb{R}\times \mathcal{C}(\mathbb{R}):\ |x-y|<\delta; \ |f(t)-g(t)|<\epsilon/2,
%\text{ for all } t\in [-h,h]
%\text{ with } [x-a,x+a]\subseteq [-h,h]\right\}
%\end{align*}
%where $h$ is the smallest number such that the interval $[-h,h]$ contains the interval $[x-a,x+a]$. On the compact interval $[-h,h]$, $\sup_{t\in [-h,h]}|f(t)-g(t)|$ is then well defined.
%	 Now we have to show that if $(y,g)\in B_{(\delta,\epsilon)}(x,f)$ for $\epsilon,\delta\to 0$, that is if $(y,g)$ is sufficiently close to $(x,f)$ than $|f(x+a)-g(x+a)|$ goes to $0$.
%	\begin{align*}
%	|f(x+a)-g(y+a)|\leq |f(x+a)-f(y+a)|+ |f(y+a)-g(y+a)|\leq \epsilon/2+\epsilon/2	=\epsilon
%	\end{align*}
%	where $|f(x+a)-f(y+a)|\leq \epsilon/2$ because f is continuous and $|x-y|<\delta$. For the second $\epsilon/2$, since	
%	$y\in [x-\delta,x+\delta]\subset [x-a,x+a]$ we have $y+a \in [x,x+2a]\subset [-(h+1),h+1]$  and we can use the definition of the open sets above. Maybe to replace $h$ by $h+1$.}
Therefore, by the continuous mapping theorem we have 
	$$X_{N_n}(a+S)\distto Y(a+S)\eqdist Y(0),$$
	and this completes the proof.
\end{proof}
\begin{lemma}
	\label{lem:hypercontractivity}
	Suppose that $X$ is a random $k\times m$ matrix with $\E[\|X\|^r]<\infty$ for some $r>1$. Then for any $q<r$, there is a constant $C=C(m,k,q,r,X)>0$ such that for any $m\times k$ deterministic matrix $A$ it holds
	$$\E[\|AX\|^r]\le C\E[\|AX\|^q]^{r/q}.$$
\end{lemma}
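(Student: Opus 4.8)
The plan is to regard, for each exponent $p>0$, the map $A\mapsto\E[\|AX\|^p]$ as a continuous, positively homogeneous functional on the finite-dimensional space $\R^{m\times k}$, and to extract the inequality from compactness of a sphere after discarding the directions along which $X$ is annihilated.

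First I would make two reductions. Both sides of the asserted bound are homogeneous of degree $r$ in $A$: writing $g(A)=\E[\|AX\|^r]$ and $f(A)=\E[\|AX\|^q]$, one has $g(tA)=|t|^rg(A)$ and $f(tA)^{r/q}=|t|^rf(A)^{r/q}$ for every $t\in\R$, and both quantities are finite for all $A$ because $\|AX\|\le\|A\|\,\|X\|$, $\E[\|X\|^r]<\infty$, and $s^q\le1+s^r$ for $s\ge0$ since $0<q<r$. Thus it is enough to prove $g\le Cf^{r/q}$ on a unit sphere. Second, let $V=\{B\in\R^{m\times k}:BX=0\ \text{a.s.}\}$, a linear subspace; decomposing $A=A_0+A_1$ with $A_0\in V$ and $A_1\in V^\perp$, we have $AX=A_1X$ a.s., so $f$ and $g$ depend on $A$ only through $A_1$, and we may assume $A\in V^\perp$. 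On $V^\perp$ the functional $f$ vanishes only at $0$, because $f(A)=0$ forces $AX=0$ a.s., i.e. $A\in V\cap V^\perp=\{0\}$.

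Next I would carry out the compactness step on the compact set $S=\{A\in V^\perp:\|A\|=1\}$. Both $f$ and $g$ are continuous on $S$: for each fixed outcome $A\mapsto\|AX\|$ is continuous, and on $\{\|A\|\le1\}$ the integrands $\|AX\|^q$ and $\|AX\|^r$ are dominated by the integrable random variables $\|X\|^q$ and $\|X\|^r$, so dominated convergence applies. Hence $M=\max_{A\in S}g(A)$ is finite and $\mu=\min_{A\in S}f(A)$ is attained and, by the previous paragraph, strictly positive. With $C=M\mu^{-r/q}$ we get $g(A)\le M=C\mu^{r/q}\le Cf(A)^{r/q}$ for all $A\in S$; homogeneity propagates this to every $A\in V^\perp$, and then to every $A\in\R^{m\times k}$ since $AX$ only sees the $V^\perp$-component of $A$.

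I do not expect a real obstacle; the one point requiring care is the quotient by $V$. Without it, $\E[\|AX\|^q]$ could equal $0$ for some $A$ with $\|A\|=1$ (when the law of $X$ is degenerate enough), so that no finite $C$ could exist and the minimum $\mu$ would be $0$; after removing $V$ the statement is just the familiar equivalence of norms on a finite-dimensional space, proved by compactness.
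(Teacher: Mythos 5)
Your proof is correct and follows essentially the same route as the paper: quotient out the null space $\{B:BX=0\ \text{a.s.}\}$, use compactness of the unit sphere in the complementary subspace together with continuity and non-vanishing of $A\mapsto\E[\|AX\|^q]$ there, take $C$ as the ratio of the maximum of $\E[\|AX\|^r]$ to the minimum of $\E[\|AX\|^q]^{r/q}$, and extend by homogeneity. Your write-up merely adds details the paper leaves implicit (dominated convergence for continuity, strict positivity of the minimum), so there is nothing to change.
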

\begin{proof}
%	First let us note that is suffices to show that it holds for  $1\times n$ matrices, as the left hand above can be bounded by finite sum of such terms. Hence our problem boils down to to get the following estimate:
%	$$\E\big[\big|\langleA,X\rangle\big|^r\big]\le C\E\big[\big|\langleA,X\rangle\big|^q\big]^{r/q},$$
%	for any vector $A\in\R^n$.
	Without loss of generality, by the homogeneity of both sides, we may also assume that $\|A\|= 1$.
	Let now $N=\{a\in\R^{m\times k}:aX=0\text{ a.s.}\}$ be a linear subspace of $\R^{m\times k}$ and $V$ its orthogonal complement. As both functions
	$$a\mapsto \E[\|aX\|^r]\quad \text{and}\quad a\mapsto\E[\|aX\|^q]^{r/q}$$
	defined on the compact space $V\cap\{\|x\|=1\}$ are continuous and do not vanish, they achieve their minimum and maximum. We define
	$$C=\frac{\max_{a\in V,\|a\|=1} \E[\|aX\|^r] }{\min_{a\in V,\|a\|=1}\E[\|aX\|^q]^{r/q}}<\infty.$$
	Finally, by splitting $A=A_1+A_2$ with $A_1\in N$ and $A_2\in V$ we then have 
	$$\E[\|AX\|^r]\le\E[\|A_2X\|^r]\le C\E[\|A_2X\|^q]^{r/q}\le C\E[\|AX\|^q]^{r/q},$$
	and this completes the proof.
\end{proof}
\begin{lemma}
	\label{lem:auxilarity lemma}
	Let $I$, $J$ be  two disjoint subintervals of $[0,1]$, $N\in\N$ and let $U_1,\dots U_N$ be an independent collection of random variables  uniformly distributed on $[0,1]$. Then for any sequence $a\in\ell^2$ and any numbers $A,B\in \R$, we have 
	\begin{align*}
		\E\Big[\Big|\sum_{i=1}^N(\ind{U_i\in I}-|I|)a_i& +A\Big|^2\cdot\Big|\sum_{i=1}^N(\ind{U_i\in J}-|J|)a_i+B\Big|^2\Big] \\
		& \leq C|I| |J|\|a\|^2(A^2+B^2+\|a\|^2)+A^2B^2\\
		& \lesssim |I||J|\|a\|^4 +A^4+B^4,
	\end{align*}
for some absolute constant $C>0$.
\end{lemma}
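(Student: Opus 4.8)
The plan is to rewrite the two inner sums in terms of the centered, bounded random variables $\xi_i=\ind{U_i\in I}-|I|$ and $\eta_i=\ind{U_i\in J}-|J|$, $1\le i\le N$, and to set $X=\sum_i a_i\xi_i$, $Y=\sum_i a_i\eta_i$, so that the quantity to bound is $\E\big[(X+A)^2(Y+B)^2\big]$. The structure I would exploit is that $\xi_1,\dots,\xi_N$ are i.i.d., mean zero and bounded by $1$ in modulus (and likewise the $\eta_i$), that $\xi_i$ and $\eta_k$ are independent whenever $i\ne k$, and that the disjointness of $I$ and $J$ enters \emph{only} through the identity $\ind{U_i\in I}\ind{U_i\in J}=0$. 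From that identity I would first record the elementary one-particle estimates $\E[\xi_i\eta_i]=-|I||J|$, $\big|\E[\xi_i^2\eta_i]\big|\le|I||J|$ and $\E[\xi_i^2\eta_i^2]\le C|I||J|$, alongside the trivial $\E[\xi_i^2]\le|I|$ and $\E[\eta_i^2]\le|J|$; these are where the gain comes from, since every mixed moment of $X$ and $Y$ below inherits a factor $|I||J|$.

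Next I would expand $(X+A)^2(Y+B)^2$ and take expectations. Because $\E X=\E Y=0$, every monomial containing a lone factor of $X$ or of $Y$ vanishes, leaving
\[\E\big[(X+A)^2(Y+B)^2\big]=\E[X^2Y^2]+2B\,\E[X^2Y]+2A\,\E[XY^2]+4AB\,\E[XY]+B^2\E[X^2]+A^2\E[Y^2]+A^2B^2.\]
Each of these moments is then computed by expanding the sums over the particle indices and discarding, by independence and centering, every multi-index in which some index occurs exactly once. This kills all but the ``diagonal'' contributions: $\E[XY]=\sum_i a_i^2\E[\xi_i\eta_i]=-|I||J|\|a\|^2$; in $\E[X^2Y]$ and $\E[XY^2]$ only the fully diagonal term survives, so $|\E[X^2Y]|\vee|\E[XY^2]|\le|I||J|\sum_i|a_i|^3\le|I||J|\|a\|^3$; and in $\E[X^2Y^2]$ the survivors are the fully diagonal index and the three pairings of the four slots into two coincident pairs, each carrying a factor $\E[\xi_i^2\eta_i^2]$, $\E[\xi_i^2]\E[\eta_k^2]$ or $(\E[\xi_i\eta_i])^2$, all of which are $\le C|I||J|$, whence $\E[X^2Y^2]\le C|I||J|\big(\sum_i a_i^4+\sum_{i\ne k}a_i^2a_k^2\big)\le C|I||J|\|a\|^4$ using $\sum_i a_i^4\le\|a\|^4$.

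The remaining step is bookkeeping: substitute these bounds, convert the $\|a\|^3$ terms into $\|a\|^2$ terms via $2|B|\|a\|^3\le\|a\|^2(B^2+\|a\|^2)$ and its analogue, use $2|AB|\le A^2+B^2$ and $|I|,|J|\le1$ to reach $C|I||J|\|a\|^2(A^2+B^2+\|a\|^2)+A^2B^2$, and finally apply $\|a\|^2A^2\le\tfrac12(\|a\|^4+A^4)$ (and the same for $B$) together with $A^2B^2\le\tfrac12(A^4+B^4)$ to obtain the stated ``$\lesssim$'' form. I expect the one genuinely delicate point to be the combinatorics of $\E[X^2Y^2]$: one has to keep the dependence between $\xi_i$ and $\eta_i$ at a coincident index while correctly discarding all non-coincident ones, and — the crux — make sure each surviving term really produces the product $|I||J|$ rather than a bare $|I|$ or $|J|$. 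That gain is exactly what disjointness buys, and it is also why a crude route through Cauchy--Schwarz and the separate fourth moments of $X+A$ and $Y+B$ would only yield $\sqrt{|I||J|}$ and is therefore too lossy.
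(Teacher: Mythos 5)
Your strategy is the same as the paper's (center, expand, kill every multi-index containing a lone factor by independence, and extract the factor $|I||J|$ from the coincident-index moments, which is exactly where disjointness enters), and your individual moment bounds are all correct: $|\E[XY]|\le|I||J|\|a\|^2$, $|\E[X^2Y]|\vee|\E[XY^2]|\le|I||J|\|a\|^3$, $\E[X^2Y^2]\le C|I||J|\|a\|^4$. The genuine gap is in the final ``bookkeeping'' step. Your (correct) expansion contains the two terms $B^2\E[X^2]$ and $A^2\E[Y^2]$, which equal $B^2|I|(1-|I|)\|a\|^2$ and $A^2|J|(1-|J|)\|a\|^2$; these carry only a single factor $|I|$, respectively $|J|$, and not the product $|I||J|$. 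They cannot be absorbed into $C|I||J|\|a\|^2(A^2+B^2+\|a\|^2)+A^2B^2$, and passing to the second line does not rescue them either, since $B^2|I|\|a\|^2\le\tfrac12\big(B^4+|I|^2\|a\|^4\big)$ and $|I|^2\|a\|^4$ is not controlled by $|I||J|\|a\|^4$ when $|J|$ is small. So the sentence ``substitute these bounds \dots to reach $C|I||J|\|a\|^2(A^2+B^2+\|a\|^2)+A^2B^2$'' simply fails for exactly these two terms.

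In fact no bookkeeping can close this, because the inequality as printed is false: take $A=0$, $B=1$, $a_1=\dots=a_N=1$, $|I|=\tfrac12$ and $|J|=N^{-2}$. Then $\E[X^2Y]=(2|I|-1)|I||J|\sum_i a_i^3=0$ and $\E[X^2Y^2]\ge0$, so the left-hand side is at least $B^2\E[X^2]=N/4$, while both claimed right-hand sides stay bounded as $N\to\infty$ (the first is $C|I||J|N(1+N)+0=O(1)$, the second is $\tfrac12+1$). For comparison, the paper's own proof has the same blind spot: its displayed expansion lists only the terms $I,II,III,IV$ and $A^2B^2$ and silently omits $B^2\E\big[(\sum_i q_i)^2\big]$ and $A^2\E\big[(\sum_i r_i)^2\big]$, so your expansion is actually the more careful one. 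The statement should be repaired by adding $|I|B^2\|a\|^2+|J|A^2\|a\|^2$ (equivalently $(|I|+|J|)\|a\|^2(A^2+B^2)$) to the right-hand side; that weaker bound is what your computation genuinely proves, and it still suffices for the application in the proof of Theorem \ref{thm:clt-general}, because there $|I|,|J|\le z-x$, $A^2\vee B^2\le(z-x)^2(Z_n^{\Psi}-F_n^{\Psi})^2$ and $\|a\|^2=Z_{n+1}^j$, so the extra terms are again bounded by $(z-x)^2\big[(Z_n^{\Psi}-F_n^{\Psi})^4+(Z_{n+1}^j)^2\big]$, i.e.\ by the same $H_n$ used there.
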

\begin{proof}
	For ease of  notation, for $i=1,\ldots,N$ we set 
	$$q_i= \big(\ind{U_i\in I}-|I|\big)a_i\quad \text{ and } r_i=\big(\ind{U_i\in J}-|J|\big)a_i,$$
	so $\E[q_i]=\E[r_i]=\E[q_i q_j]=\E[r_i r_j]=0$, for $i\ne j$. Simple calculations give
\begin{align*}
	\E[q_i^2]&=\big(|I|-|I|^2\big)a_i^2,\quad \text{and}\quad 	\E[q_ir_i]=-|I||J|a_i^2, \\
	\E[q_i^2r_i]&=\big(-|I||J|+2|I|^2|J|\big)a_i^3,\quad \text{and} \quad 
	\E[q_i^2r_i^2]=\big(|I||J|(|I|-3|I||J|+|J|)\big)a_i^4.
	\end{align*}
	We first have
	\begin{equation}\label{eq:expec-2}
	E\Big[\big(\sum_{i\leq N}q_i+A\big)^2\Big]=(|I|-|I|^2)N+A^2\leq |I|N+A^2.
	\end{equation}
	Expanding the expectation we get
	\begin{align*}
	\E\Big[\Big|\sum_{i\le N}q_i+A\Big|^2\cdot\Big|\sum_{i\le N}r_i+B\Big|^2\Big]=
	\sum_{i_1,i_2,j_1,j_2}\E[q_{i_1}q_{i_2}r_{j_1}r_{j_2}]+2A\sum_{i,j_1,j_2}\E[q_ir_{j_1}r_{j_2}]\\
	+2B\sum_{i_1,i_2,j}\E[q_{i_1}q_{i_2}r_{j}]+4AB\sum_{i,j}\E[q_ir_j]+A^2B^2\eqdef I+II+III+IV+A^2B^2.
	\end{align*}
	We show that each of the four terms $I,II, III, IV$ is bounded by a multiple of $|I| |J|N(A^2+B^2+N)$.
	Note that a nontrivial term of the form $\E[q_{i_1}q_{i_2}r_{j_1}r_{j_2}]$ is either of the form $E[q_i^2r_j^2]$ or $\E[q_{i_1}q_{i_2}r_{i_1}r_{i_2}]$, which in turn implies
	\begin{align*}
	I&=\sum_{i,j}\E[q_i^2r_j^2]+4\sum_{i\neq j}\E[q_ir_i]\E[q_jr_j]=|I||J|(|I|-3|I||J|+|J|)\sum a_i^4\\
	& +2(|I|-|I|^2)(|J|-|J|^2)\sum_{i\neq j}a_i^2a_j^2+4|I|^2|J|^2\sum_{i\neq j}a_i^2a_j^2
	 \le 8|I||J|\|a\|^4.
	\end{align*}
	Next, $\E[q_{i_1}q_{i_2}r_{j}]$ is nonzero if it is of the form $\E[q_i^2r_i]$. Hence, we have
	\begin{align*}
	III  = 2B\sum_i\E[q^2_ir_i]=2B(-|I||J|+2|I|^2|J|)\sum_{i}a_i^3
	 =2B|I||J|(2|I|-1)\sum_{i}a_i^3 	\le 4|I||J||B|\|a\|^3.
	\end{align*}
	By symmetry, 	we have
	\begin{align*}
	II=2A(-|I||J|+2|I||J|^2)\sum_{i}a_i^3=2A|I||J|(2|J|-1)\sum_{i}a_i^3
	\le 4|I||J||A|\|a\|^3.
\end{align*}
Finally, by the same reasoning as above, we get
\begin{align*}
IV=-4AB|I||J|\sum_{i}a_i^2\le 4|AB||I||J|\|a\|^2,
\end{align*}
and the claim follows by putting together the four quantities.
\end{proof}

\begin{lemma}
	\label{lem:linear independence}
	Let $l,N\in\N$ and let  $\lambda_1,\dots,\lambda_\ell$ be different, non-zero complex numbers. For $(i,j)\in\N_0\times [\ell]$ we define  $f_{i,j}:\Z\to\C$  by $f_{i,j}(k)= k^i\lambda_j^k$. Then the collection of functions $\{f_{i,j}:(i,j)\in\N_0\times [\ell]\}$ is linearly independent. In particular, if for any $i\in\N_0$, $j\le \ell$, $p_{j,i}$ is a polynomial of degree $i$, then the collection $\{k\mapsto \lambda_j^{k} p_{j,i}(k):(i,j)\in\N_0\times [\ell]\}$ is also linearly independent.
\end{lemma}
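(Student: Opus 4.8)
The plan is to reduce the statement to the following classical fact and prove it by induction on $\ell$: if $p_1,\dots,p_\ell$ are complex polynomials and $\sum_{j=1}^\ell p_j(k)\lambda_j^k=0$ for every $k\in\Z$, then $p_j\equiv 0$ for all $j$. Granting this, the asserted linear independence of $\{f_{i,j}\}$ is immediate, since any finite linear combination $\sum_{i,j}c_{i,j}f_{i,j}$ may be rewritten as $\sum_{j=1}^\ell p_j(k)\lambda_j^k$ with $p_j(k)=\sum_i c_{i,j}k^i$, so its vanishing forces every $p_j\equiv 0$, i.e.\ every $c_{i,j}=0$. The ``in particular'' statement follows the same way: a vanishing combination of the functions $k\mapsto\lambda_j^{k}p_{j,i}(k)$ rewrites as $\sum_j P_j(k)\lambda_j^k$ with $P_j=\sum_i c_{i,j}p_{j,i}$, hence each $P_j\equiv 0$, and since for fixed $j$ the polynomials $p_{j,i}$ have pairwise distinct degrees $i$ they are themselves linearly independent, forcing all coefficients to vanish.

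For the induction, the base case $\ell=1$ is clear: $p_1(k)\lambda_1^k=0$ with $\lambda_1\neq 0$ gives $p_1(k)=0$ for all $k\in\Z$, hence $p_1\equiv 0$ as a nonzero polynomial has only finitely many roots. For the inductive step I would introduce the shift operator $(Tg)(k)=g(k+1)$ and apply $(T-\lambda_\ell I)$ repeatedly to the identity. A direct computation gives $(T-\lambda_\ell I)\big[p(k)\lambda_j^k\big]=\big(\lambda_j p(k+1)-\lambda_\ell p(k)\big)\lambda_j^k$; writing $D_j p=\lambda_j p(\cdot+1)-\lambda_\ell p(\cdot)$, the key observations are that $D_\ell$ is $\lambda_\ell$ times the forward difference, so it strictly lowers degrees and annihilates constants, whereas for $j\neq \ell$ the operator $D_j$ preserves the degree of $p$ and multiplies its leading coefficient by the nonzero factor $\lambda_j-\lambda_\ell$. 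Consequently, applying $(T-\lambda_\ell I)^{d+1}$ with $d=\deg p_\ell$ annihilates the $j=\ell$ term and leaves $\sum_{j\neq\ell}q_j(k)\lambda_j^k=0$ for all $k\in\Z$, where $q_j=D_j^{d+1}p_j$ has the same degree as $p_j$ with leading coefficient scaled by $(\lambda_j-\lambda_\ell)^{d+1}\neq 0$. The induction hypothesis applied to the $\ell-1$ distinct nonzero bases $\lambda_1,\dots,\lambda_{\ell-1}$ gives $q_j\equiv 0$, hence $p_j\equiv 0$ for all $j\neq \ell$; the original identity then collapses to $p_\ell(k)\lambda_\ell^k=0$, whence $p_\ell\equiv 0$ as in the base case.

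The computations here are routine; the only points requiring a little care are the effect of $D_j$ on degrees and leading coefficients (which is exactly where distinctness of the $\lambda_j$ and $\lambda_\ell\neq 0$ enter) and the harmless remark that it suffices to have the identity on an infinite subset of $\Z$, so that vanishing polynomials are genuinely zero. I do not expect a serious obstacle. An essentially equivalent alternative would be to pass to the generating function $\sum_{k\ge 0}\big(\sum_j p_j(k)\lambda_j^k\big)z^k$, which is a rational function whose only poles are at the points $1/\lambda_j$, with the pole at $1/\lambda_j$ of order $\deg p_j+1$ when $p_j\not\equiv 0$, so the series vanishes iff all $p_j$ do; but the difference-operator argument stays within the elementary setting of the lemma and is cleaner to write out.
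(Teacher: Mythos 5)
Your proof is correct, and at its core it uses the same device as the paper: your operator $(T-\lambda_\ell I)$ is, up to the nonzero scalar $\lambda_\ell$, exactly the twisted difference operator $\nabla_{j_0}=m_{j_0}\nabla m_{j_0}^{-1}$ of the paper's proof, and the two key computations (the degree of the block $j=\ell$ drops under the operator, while for $j\neq\ell$ the degree is preserved and the leading coefficient is multiplied by $\lambda_j-\lambda_\ell\neq 0$) are identical. What differs is the bookkeeping: you induct on the number $\ell$ of exponential bases and annihilate an entire block by applying the operator $\deg p_\ell+1$ times, whereas the paper inducts on the total degree $d(h)=d_1(h)+\dots+d_\ell(h)$, applies the operator once per step, and settles the case where all $d_j\le 1$ by a Vandermonde determinant; your scheme avoids the Vandermonde step and is arguably cleaner to write out, and your treatment of the ``in particular'' statement (distinct degrees of $p_{j,i}$ for fixed $j$) is a detail the paper leaves implicit. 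One substantive point of comparison: the paper's proof actually establishes the stronger fact that vanishing of the combination for all $k\ge N$ already forces all coefficients to be zero, and it is this half-line version that is used in Proposition \ref{prop:pos-variances}, where the identities are only available for large $k$; your argument yields this strengthening verbatim, since each application of $(T-\lambda_\ell I)$ only involves the values at $k$ and $k+1$, and the final polynomial identities then hold at infinitely many integers. Be careful, however, with your closing remark that ``it suffices to have the identity on an infinite subset of $\Z$'': for an arbitrary infinite subset this is false (take $\lambda_1=1$, $\lambda_2=-1$; then $1^k-(-1)^k$ vanishes on all even $k$ without being identically zero). The correct statement, and the one your method proves, is that vanishing on a set containing all sufficiently large integers (or, more generally, enough consecutive integers to carry out the finitely many differencing steps) suffices; the ``infinitely many points'' fact is needed only for the purely polynomial identities at the end of the induction.
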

\begin{proof}
	Let $h=\sum_{i,j}c_{i,j}f_{i,j}$ be a finite linear combination of the functions $f_{i,j}$. Our aim is to show that if
	\begin{align}
	\label{eq:linear independence}
	h(k)=0\text{ for }k\ge N\text{ then }c_{i,j}=0 \text{ for all }i,j.
	\end{align}
	By $d_j(h)$ we denote the maximal power $i$ such that the element $f_{i-1,j}$ appears in the combination of $h$. Furthermore, we set $d(h)= d_1(h)+\dots d_\ell(h)$. We prove the claim by induction on $d(h)$.
		If $d(h)=1$ then $h(k)=\lambda_j^k$ for some $j$ and the conclusion follows. Now suppose that \eqref{eq:linear independence} holds for any $h$ with $d(h)=n$ and take $h$ with $d(h)=n+1$. If $d_j(h)\le1$ for all $j\le \ell$ then $h(k)=\sum_{m=1}^{n+1}c_{j_m}\lambda_{j_m}^k$ for some $j_1,\dots,j_{n+1}\le n+1$. Since $\big(\lambda_{j_m}^{-N+1}f_{0,j_m}(k)\big)_{1\le k,m\le n+1}$ forms a Vandermonde matrix, this implies \eqref{eq:linear independence}.
	
	Therefore, we may now assume that for some $j_0\le \ell$ we have $d_{j_0}(h)\ge2$. We denote by $\nabla$ the difference operator defined by $\nabla f(k)= f(k+1)-f(k)$, and  $m_{j_0}$ is defined by $m_{j_0}f(k)=\lambda_{j_0}^kf(k)$. We now define a linear operator $\nabla_{{j_0}}= m_{j_0}\nabla m_{j_0}^{-1}$. Clearly $\nabla_{j_0}$ acts on the linear combinations $g$ of  $f_{i,j}$ with 
	$d_{j}(\nabla_{j_0}g)\le d_{j}(g)$ and $d_{j_0}(\nabla_{j_0}f_{i,j_0})=d_{j_0}(f_{i,j_0})-1 $. In particular, $1\le \nabla_{j_0}h\le n$, and hence by the induction hypothesis  $\nabla_{j_0}h(k)\neq0$ for some $k\ge N$, which finally implies that $h(k)\neq0$ or $h(k+1)\neq0$, thus proving \eqref{eq:linear independence}.
\end{proof}

\paragraph{Acknowledgments.}The research of Ecaterina Sava-Huss is supported by the Austrian Science Fund (FWF): P 34129.
We are very grateful to the two anonymous referees for their suggestions and positive criticism that improved substantially the quality and the presentation of the paper.
\vspace{-0.5cm}
%\bibliography{range}{}
\bibliography{GW}{}
\bibliographystyle{abbrv}

\textsc{Konrad Kolesko}, Department of Mathematics, University of Wroclaw, Poland.\\
\texttt{Konrad.Kolesko@math.uni.wroc.pl}

\textsc{Ecaterina Sava-Huss}, Institut für Mathematik, Universität Innsbruck, Österreich.\\
\texttt{Ecaterina.Sava-Huss@uibk.ac.at}
\end{document}